\documentclass[reqno]{amsart}

 \usepackage{mathtools} % For \coloneqq, \DeclarePairedDelimiter
\usepackage{bm}

\usepackage{amsmath}
\usepackage{amsfonts}
\usepackage{amssymb,amscd,amsthm}
\usepackage{geometry}
\usepackage{mathrsfs}
\usepackage[bookmarksnumbered, colorlinks, linktocpage, plainpages]{hyperref}
\usepackage{nameref}
\usepackage{wasysym,amssymb,eufrak,indentfirst,color}
\usepackage{color}
\usepackage{mathtools}
\usepackage{cite}
\usepackage{appendix}
\usepackage{extarrows}
\usepackage{setspace}

\usepackage[all]{xy}

\usepackage{cases}
\usepackage{graphicx}
\usepackage{ragged2e}

\usepackage{amsmath, amssymb, amsthm}
\usepackage{hyperref}

\newtheorem{theorem}{Theorem}[section]
\newtheorem{lemma}[theorem]{Lemma}
\newtheorem{proposition}[theorem]{Proposition}
\newtheorem{corollary}[theorem]{Corollary}
\theoremstyle{definition}
\newtheorem{definition}[theorem]{Definition}
\theoremstyle{remark}
\newtheorem{remark}[theorem]{Remark}

\numberwithin{equation}{section}
\newcommand{\YY}{\mathbf{Y}}

\begin{document}

\title[Weighted $L^2$ theory for the Euclidean Dirac operator]{Weighted $L^2$ theory for the Euclidean Dirac operator in higher dimensions}

\author{Guangbin Ren}
	\email[G.~Ren]{rengb@ustc.edu.cn}
\address[Guangbin Ren]{Department of Mathematics, University of Science and Technology of China, Hefei 230026, China}

\author{Yuchen Zhang$^\ast$}
\email[Y. Zhang]{yuchen95@amss.ac.cn}
\address[Yuchen Zhang]{Institute of Mathematics, Academy of Mathematics and Systems Sciences, Chinese Academy of Sciences, Beijing 100190, China}

\begin{abstract}
We study weighted $L^{2}$ solvability for the Euclidean Dirac operator in dimensions $n\ge 3$. We prove that, on the exterior domain $\mathbb{R}^{n}\setminus\overline{B(0,1)}$ with logarithmic weight $\varphi=n\log|x|$, no higher-dimensional analogue of the two-dimensional H\"ormander estimate can be controlled solely by $\Delta\varphi$; we then establish weighted solvability for the weights $|x|^{m}$ with $m\neq 0$, for the quadratic weight $x_{1}^{2}$, and for sufficiently small anisotropic perturbations of the Gaussian weight, with sharp constant $1/4$ in the Gaussian case. The obstruction arises because, in dimensions $n\ge 3$, the classical weighted identity is coercive only under a structural relation between $\Delta\varphi$ and $|\nabla\varphi|^{2}$, a condition that excludes the Gaussian weight and many polynomial weights. The method is based on a weighted identity for the conjugated unknown $U:=ue^{-\varphi/2}$, together with suitable scalar and Clifford-valued multipliers; this identity yields the required coercive estimates and also gives weighted $L^{2}$ solvability for the Poisson equation through the factorization $\Delta=-D^{2}$.
\end{abstract}

\keywords{Dirac operator, weighted $L^2$ theory, Clifford analysis, solvability, Gaussian weight, Poisson equation}

\subjclass[2020]{Primary 35J46; Secondary 35A01, 35A23, 35B45, 30G35}

\maketitle

\section{Introduction}\label{section:1}

H\"{o}rmander's $L^{2}$ method for the $\bar{\partial}$-equation is a standard tool in several complex variables and related areas of analysis
\cite{Hormander65,Hormander73,Hormander03History,Demailly-smallbook,DemaillyBook,McNealVarolin15,Ohsawa18}.
In the Euclidean Dirac setting, \cite{JiZhu17} proved in dimension two a weighted $L^{2}$ existence theorem under the sole assumption that the weight is subharmonic. We recall this result for reference.

\begin{theorem}\label{thm:1.1}
Let $\Omega \subset \mathbb{R}^{2}$ be a domain and let $\varphi \in C^{2}(\Omega,\mathbb{R})$ be subharmonic. Suppose that
\[
\int_{\Omega} \frac{|f|^{2}}{\Delta\varphi} e^{-\varphi}\,dV < \infty
\]
for some $f \in L^{2}_{\varphi}(\Omega,\mathbb{R}_{2})$. Then there exists $u \in L^{2}_{\varphi}(\Omega,\mathbb{R}_{2})$ such that
\[
Du=f \quad \text{in }\Omega,
\]
and
\[
\|u\|_{\varphi}^{2} \le \int_{\Omega} \frac{|f|^{2}}{\Delta\varphi} e^{-\varphi}\,dV.
\]
\end{theorem}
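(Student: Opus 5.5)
This is the classical Hörmander scheme for $\bar\partial$, transplanted to the two-dimensional Dirac operator. In dimension two, $\mathbb{R}_2$ is the quaternions and $D=e_1\partial_1+e_2\partial_2$ with $D^2=-\Delta$. The plan rests on two ingredients: a duality (Riesz representation) argument, and an a priori estimate for the formal adjoint $D^{*}_{\varphi}$ of $D$ in $L^2_\varphi$.

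\textbf{Step 1: the adjoint.} Using the real inner product $\langle u,v\rangle_\varphi=\int_\Omega \mathrm{Re}(\overline{u}v)\,e^{-\varphi}\,dV$, integration by parts gives
\[
D^{*}_{\varphi}g=-e^{\varphi}D\bigl(e^{-\varphi}g\bigr)=-Dg+(D\varphi)\,g ,
\]
where we use $\overline{e_j}=-e_j$ and take left multiplication throughout.

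\textbf{Step 2: the key estimate.} For $g\in C_c^\infty(\Omega,\mathbb{R}_2)$ we aim to prove
\[
\int_\Omega \Delta\varphi\,|g|^2 e^{-\varphi}\,dV\le \|D^{*}_{\varphi}g\|_\varphi^2 .
\]
This is the main obstacle. We conjugate by setting $G=ge^{-\varphi/2}$, so that
\[
D^{*}_{\varphi}g\,e^{-\varphi/2}=-DG+\tfrac12(D\varphi)G .
\]
Expanding the square gives
\[
\int\Bigl|{-DG}+\tfrac12 D\varphi\,G\Bigr|^2=\int|DG|^2+\tfrac14\int|\nabla\varphi|^2|G|^2-\int\mathrm{Re}\bigl(\overline{DG}\,D\varphi\,G\bigr).
\]
The cross term is where dimension two matters. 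Writing $e_1,e_2$ together with $e_{12}$, the relation $e_1 e_2=-e_2e_1$ lets us integrate by parts in the cross term. The result should be $\tfrac12\int\Delta\varphi\,|G|^2$, plus a term that combines with the two squares into a perfect square of the form $\int|\bar D G-\tfrac12\,\cdots|^2\ge 0$ (a Morrey–Kohn–Hörmander type identity). This uses that in $\mathbb{R}^2$ the quaternionic Dirac operator splits into $\partial_{\bar z}$-type pieces, for which the Hörmander identity $\|\bar\partial^*_\varphi g\|^2=\|\partial g\|^2+\int\varphi_{z\bar z}|g|^2$ is exact. Concretely, the identity we expect, with appropriate conjugate operators, is
\[
\|D^*_\varphi g\|_\varphi^2=\int \Delta\varphi\,|g|^2e^{-\varphi}+\|\widetilde{D}g\|_\varphi^2 .
\]
Here $\widetilde D$ is a twisted operator, obtained for instance by decomposing $g=g_1+g_2e_2$ with $g_i$ complex in the $e_1$-plane. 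Then $D$ acts as $\partial_z$ or $\partial_{\bar z}$ on each component, and Hörmander's one-variable identity on each component gives the claim, with $4\varphi_{z\bar z}=\Delta\varphi$. Checking these signs and normalizations carefully is the core computation.

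\textbf{Step 3: duality.} For $f$ as in the statement, Cauchy–Schwarz and Step 2 give, for every compactly supported $g$,
\[
|\langle f,g\rangle_\varphi|^2\le \int\frac{|f|^2}{\Delta\varphi}e^{-\varphi}\cdot\int\Delta\varphi|g|^2e^{-\varphi}\le C_f\,\|D^*_\varphi g\|_\varphi^2 .
\]
Hence the functional $D^*_\varphi g\mapsto\langle g,f\rangle_\varphi$ is well defined and bounded on the range of $D^*_\varphi$. It extends by zero on the orthogonal complement, and Riesz representation produces $u$ with $\|u\|_\varphi^2\le C_f$ and $\langle u,D^*_\varphi g\rangle_\varphi=\langle f,g\rangle_\varphi$. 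This means $Du=f$ in distributions.

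One technical point must be handled. Where $\Delta\varphi=0$ the finiteness hypothesis forces $f=0$ a.e., so the division is harmless. Because the test functions have compact support, no completeness of $\Omega$ or density argument is required: working with distributions on $C_c^\infty$ suffices.
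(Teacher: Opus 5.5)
Your architecture matches the paper's. Step~3 is Lemma~\ref{lem:L2 existence} (Hahn--Banach and Riesz on the range of $D^*_\varphi$, with no density lemma needed), and the inequality in Step~2 is exactly Proposition~\ref{prop:apriori}(1). The gap is that Step~2, which carries the whole content of the theorem, is only announced (``should be'', ``the identity we expect''). Moreover, the concrete mechanism you offer fails as stated. For $g=g_1+g_2e_2$ with $g_1,g_2\in\operatorname{span}_{\mathbb R}\{1,e_1\}$, left multiplication by $e_2$ is conjugate-linear on the components ($e_2g_1=\overline{g_1}\,e_2$). Hence
\[
Dg=\bigl(e_1\partial_1g_1-\partial_2\overline{g_2}\bigr)+\bigl(e_1\partial_1g_2+\partial_2\overline{g_1}\bigr)e_2 ,
\]
which couples $g_1$ with $\overline{g_2}$. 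So $D$ acts as neither $\partial_z$ nor $\partial_{\bar z}$ on a single component, and the one-variable H\"ormander identity cannot be applied componentwise. There are also two smaller slips. First, $e_j$ and $\partial_j$ are both skew for $\langle\cdot,\cdot\rangle_0$, so $D$ is formally symmetric and $D^*_\varphi g=Dg-(D\varphi)g$, without your minus sign; this is harmless for norms, but your formula taken literally gives $Du=-f$ in Step~3. Second, the scalar identity reads $\|\bar\partial^*_\varphi v\|_\varphi^2=\|\partial_{\bar z}v\|_\varphi^2+\int\varphi_{z\bar z}|v|^2e^{-\varphi}$, with $\partial_{\bar z}v$ rather than $\partial v$.

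The missing idea is a complex structure that commutes with left Clifford multiplication. Let $i:=e_1e_2$, write $g=w_1+e_1w_2$ with $w_1,w_2\in\operatorname{span}_{\mathbb R}\{1,e_1e_2\}\cong\mathbb C$, and set $z=x_1+ix_2$. Then $|g|^2=|w_1|^2+|w_2|^2$ and $Dg=-2\partial_{\bar z}w_2+e_1\,2\partial_zw_1$, with the analogous formula for $(D\varphi)g$. Therefore
\[
D^*_\varphi g=-2\bigl(\partial_{\bar z}w_2-\varphi_{\bar z}w_2\bigr)+e_1\,2\bigl(\partial_zw_1-\varphi_zw_1\bigr).
\]
Applying the scalar identity and its complex conjugate, together with $4\varphi_{z\bar z}=\Delta\varphi$, gives
\[
\|D^*_\varphi g\|_\varphi^2=4\|\partial_zw_2\|_\varphi^2+4\|\partial_{\bar z}w_1\|_\varphi^2+\int_\Omega\Delta\varphi\,|g|^2e^{-\varphi}\,dV .
\]
This is the paper's identity in different coordinates. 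The paper combines the Bochner identity \eqref{eq:basic-Bochner} with the two-dimensional pointwise identity $2\sum_j|\partial_ju|^2-|Du|^2=|(e_1\partial_1-e_2\partial_2)u|^2$, which follows from the parallelogram law and $|e_jv|=|v|$. Indeed $(e_1\partial_1-e_2\partial_2)g=-2\partial_zw_2+e_1\,2\partial_{\bar z}w_1$, so both remainders agree. With either derivation supplied, your Steps~1 and~3 (sign corrected) finish the proof.
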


In dimensions $n\ge3$, the classical weighted identity yields coercivity only under an additional relation between $\Delta\varphi$ and $|\nabla\varphi|^{2}$. This condition excludes, in particular, the Gaussian weight and many polynomial weights. The following theorem shows that this restriction reflects an actual obstruction.

\begin{theorem}[Obstruction in higher dimensions]\label{thm:1.2}
Let $n \ge 3$, $\Omega = \mathbb{R}^{n} \setminus \overline{B(0,1)}$, and $\varphi = n\log|x|$. For each $m \in \mathbb{N}_{+}$ define
\[
u_{m} = |x|^{-1/m}, \qquad f_{m} = D u_{m} = -\frac{1}{m}|x|^{-1/m-2}x.
\]
Then $Du_{m}=f_{m}$, $u_{m}$ is the solution of minimal $L^{2}_{\varphi}$-norm, and
\[
\lim_{m\to+\infty}
\frac{\|u_{m}\|_{\varphi}^{2}}
{\displaystyle \int_{\Omega}\frac{|f_{m}|^{2}}{\Delta\varphi}e^{-\varphi}\,dV}
=+\infty.
\]
In particular, for this pair $(\Omega,\varphi)$ the naive higher-dimensional H\"{o}rmander-type existence statement fails: there is no finite constant $C=C(\Omega,\varphi)$ such that every $f\in L^{2}_{\varphi}(\Omega,\mathbb{R}_{n})$ with
\[
\int_{\Omega} \frac{|f|^{2}}{\Delta\varphi} e^{-\varphi}\,dV < \infty
\]
admits a solution $u$ of $Du=f$ satisfying
\[
\|u\|_{\varphi}^{2} \le C \int_{\Omega} \frac{|f|^{2}}{\Delta\varphi} e^{-\varphi}\,dV.
\]
\end{theorem}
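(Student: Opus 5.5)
The plan is a direct computation in polar coordinates, followed by a characterization of the minimal solution as the one orthogonal to the kernel of $D$ in $L^2_\varphi$.

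\textbf{The equation $Du_m=f_m$.} Since $\partial_j|x|^{-1/m}=-\frac1m|x|^{-1/m-2}x_j$, we get $Du_m=\sum_j e_j\partial_j u_m=-\frac1m|x|^{-1/m-2}x=f_m$.

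\textbf{The ratio of norms.} Here $e^{-\varphi}=|x|^{-n}$, and $\Delta\varphi=n(n-2)|x|^{-2}$ because $\Delta\log|x|=(n-2)|x|^{-2}$; this is positive since $n\ge3$. Write $\omega_{n-1}=|S^{n-1}|$. In polar coordinates,
\[
\|u_m\|_\varphi^2=\omega_{n-1}\int_1^\infty r^{-2/m}r^{-n}r^{n-1}\,dr=\omega_{n-1}\frac m2 .
\]
For the right-hand side we use $|f_m|^2=m^{-2}|x|^{-2/m-2}$. Then
\[
\int_\Omega\frac{|f_m|^2}{\Delta\varphi}e^{-\varphi}\,dV=\frac{\omega_{n-1}}{m^2n(n-2)}\int_1^\infty r^{-2/m-1}\,dr=\frac{\omega_{n-1}}{2m\,n(n-2)} .
\]
Hence the ratio equals $m^2n(n-2)\to\infty$. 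The failure of a uniform constant $C$ then follows immediately. Any solution $u$ of $Du=f_m$ satisfies $\|u\|_\varphi\ge\|u_m\|_\varphi$ once minimality is known, so $C\ge m^2n(n-2)$ for every $m$.

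\textbf{Minimality.} Any two $L^2_\varphi$ solutions differ by some $h\in L^2_\varphi(\Omega,\mathbb R_n)$ with $Dh=0$. So it suffices to show that $u_m$ is orthogonal to every such $h$ in the real inner product $\mathrm{Re}\int\overline{u_m}h\,|x|^{-n}\,dV$. Equivalently, since $u_m$ is real and radial, it suffices to show
\[
\int_{S_r}h\,d\sigma=0\quad\text{for every } r>1 .
\]
The argument for this goes in three steps.

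First, use the Laurent expansion of monogenic functions on the exterior of a ball, in the form of Delanghe--Sommen--Sou\v{c}ek. It writes $h=\sum_{k\ge0}P_k+\sum_{k\ge0}Q_k$, where the $P_k$ are inner spherical monogenics homogeneous of degree $k$. The $Q_k$ are outer ones, homogeneous of degree $-(n-1+k)$, of the form $\frac{x}{|x|^n}\,\tilde P_k\!\left(\frac{x}{|x|^2}\right)$. The expansion converges in $L^2(S_r)$ on each sphere and is orthogonal there.

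Second, compute the spherical means term by term. Each $P_k$ with $k\ge1$ restricts on the sphere to a spherical harmonic of positive degree, so it has mean zero. Each $Q_k$ is harmonic and homogeneous of degree $-(n-2)-(k+1)$, so its restriction is a spherical harmonic of degree $k+1\ge1$. This includes the Cauchy kernel $x/|x|^n$ itself, which is visibly odd. Therefore every $Q_k$ also has mean zero, and $\int_{S_r}h\,d\sigma=\omega_{n-1}r^{n-1}P_0$ with $P_0$ a constant.

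Third, show $P_0=0$. By orthogonality on spheres,
\[
\|h\|_\varphi^2\ge|P_0|^2\,\omega_{n-1}\int_1^\infty r^{-1}\,dr ,
\]
which diverges unless $P_0=0$. This gives the orthogonality and hence minimality.

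\textbf{Main obstacle.} The delicate step is justifying the Laurent expansion and its orthogonality for a merely $L^2_\varphi$ monogenic $h$ on the unbounded annulus. If that proves cumbersome, there is a more elementary fallback. Note that $M(r):=\int_{S_r}h\,d\sigma$, divided by the sphere's area, is the spherical mean of a harmonic function, so it has the form $a+br^{2-n}$. Jensen's inequality together with $h\in L^2_\varphi$ forces $a=0$. To kill $b$, one would use $Dh=0$ to show that the ``$|x|^{2-n}$'' radial component cannot occur in a monogenic function, since $D|x|^{2-n}\neq0$ and its monogenic counterpart $x/|x|^n$ has zero mean. Concretely, this means testing $h$ against constants through Stokes' formula on annuli.
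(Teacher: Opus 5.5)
Your proposal is correct and follows the paper's proof essentially step for step. The same polar-coordinate computation gives the ratio $m^{2}n(n-2)$, and minimality comes from orthogonality of $u_m$ to the weighted kernel of $D$ via the Laurent expansion into inner and outer spherical monogenics, all of whose non-constant terms have vanishing spherical mean. The one small difference is that you eliminate only the constant term $P_0$, using Jensen on spheres and $\int_1^\infty r^{-1}\,dr=\infty$; that is all that is needed, and it is slightly tighter than the paper's term-by-term growth argument for the whole inner part. For your fallback, the clean route is the Clifford Stokes formula with test function $x$ placed on the left rather than a constant. Since $\sum_j \partial_j(x)\,e_j=-n$, this gives $r\,M(r)-r_1M(r_1)=n\int_{r_1}^{r}M(s)\,ds$, hence $M(r)=c\,r^{n-1}$, so the spherical mean is constant and integrability kills it.
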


Theorem~\ref{thm:1.2} shows that, in higher dimensions, weighted solvability cannot in general be based on $\Delta\varphi$ alone. The problem is therefore to recover coercivity by a different argument. The next theorem gives such results for several classes of weights.

\begin{theorem}[Weighted $L^2$ solvability]\label{thm:1.3}
Let $n \ge 2$ and let $\Omega \subset \mathbb{R}^{n}$ be a domain. Then the following statements hold.
\begin{itemize}
 \item[(1)] Let $m \in \mathbb{R}\setminus\{0\}$, let $\varphi = |x|^{m}$, and assume $\Omega \subset \mathbb{R}^{n}\setminus\{0\}$. For every
 $f \in L^{2}_{\varphi}(\Omega,\mathbb{R}_{n})$ satisfying
 \[
 \int_{\Omega} \frac{|f|^{2}}{|x|^{m-2}} e^{-\varphi}\,dV < \infty,
 \]
 there exists $u \in L^{2}_{\varphi}(\Omega,\mathbb{R}_{n})$ such that $Du = f$ and
 \[
 \|u\|_{\varphi}^{2} \le
 \int_{\Omega} \frac{|f|^{2}}{m^{2}|x|^{m-2}} e^{-\varphi}\,dV.
 \]
 In particular, for $\varphi=|x|^{2}$ one has
 \begin{equation}\label{eq:sharp-estimate-gauss-weight}
 \|u\|_{\varphi}^{2} \le \frac{1}{4}\,\|f\|_{\varphi}^{2},
 \end{equation}
 and, if $\Omega=\mathbb{R}^{n}\setminus\{0\}$, the constant $1/4$ is sharp.

 \item[(2)] Let $\varphi = x_{1}^{2}$. For every $f \in L^{2}_{\varphi}(\Omega,\mathbb{R}_{n})$ there exists a solution $u$ of $Du = f$ such that
 \[
 \|u\|_{\varphi}^{2} \le \frac{1}{2}\,\|f\|_{\varphi}^{2}.
 \]

 \item[(3)] Let $\varphi = \sum_{i=1}^{n} a_{i}x_{i}^{2}$ with $|a_{i}-1|<\varepsilon$ for some sufficiently small $\varepsilon>0$, and assume that $\Omega$ is exterior to $B(0,1)$. Then for every $f \in L^{2}_{\varphi}(\Omega,\mathbb{R}_{n})$ there exists a solution $u$ of $Du = f$ such that
 \[
 \|u\|_{\varphi}^{2} \le \frac{1}{3}\,\|f\|_{\varphi}^{2}.
 \]
\end{itemize}
\end{theorem}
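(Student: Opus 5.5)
The plan is to run the Hörmander duality scheme for $D$ on $L^2_\varphi(\Omega,\mathbb{R}_n)$. Everything reduces to an a priori estimate for the formal adjoint $D^*_\varphi$ on compactly supported test functions. With the convention $D=\sum_j e_j\partial_j$ and $e_j^2=-1$, integration by parts against $e^{-\varphi}dV$ gives
\[
D^*_\varphi v=e^{\varphi}D(e^{-\varphi}v)=Dv-(D\varphi)v .
\]
Suppose we can show, for all $v\in C_c^\infty(\Omega,\mathbb{R}_n)$,
\[
\int_\Omega w\,|v|^2e^{-\varphi}\,dV\le \|D^*_\varphi v\|_\varphi^2
\]
for a positive weight $w$: $w=m^2|x|^{m-2}$ in (1), $w=2$ in (2), $w=3$ in (3). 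Then the standard Riesz/Hahn–Banach argument applies. Consider the functional $D^*_\varphi v\mapsto \langle v,f\rangle_\varphi$ and bound it by Cauchy–Schwarz with $\int|f|^2w^{-1}e^{-\varphi}$. This produces $u$ with $Du=f$ and $\|u\|^2_\varphi\le\int |f|^2w^{-1}e^{-\varphi}$. Density of test functions in the graph norm follows from Friedrichs mollification and cutoffs, since the weights are smooth on $\Omega$.

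To prove the adjoint estimate, I would conjugate as the abstract suggests. Set $U=ve^{-\varphi/2}$. Then
\[
e^{-\varphi/2}D^*_\varphi v=DU-\tfrac12(D\varphi)U,
\]
and hence
\[
\|D^*_\varphi v\|_\varphi^2=\int\bigl|DU-\tfrac12 (D\varphi)U\bigr|^2dV .
\]
Expanding, and using $(D\varphi)^2=-|\nabla\varphi|^2$ together with integration by parts of the cross term, gives
\[
\int|DU|^2+\tfrac14|\nabla\varphi|^2|U|^2 \;+\;\text{cross term}.
\]
The cross term equals $\int \tfrac12\Delta\varphi\,|U|^2$ plus a Clifford term involving $\sum_{i\neq j}\partial_i\varphi\, e_i e_j\partial_j U$. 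In dimension two the Clifford term vanishes, which is the source of Theorem~\ref{thm:1.1}. In dimension $n\ge3$ it does not, and that is the difficulty.

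I would not discard $\int|DU|^2$. Instead I would use a multiplier. For a scalar $\psi$ and a Clifford-valued multiplier $A$, write
\[
|DU-\tfrac12 (D\varphi)U|^2\ge 2\,\mathrm{Re}\langle DU-\tfrac12(D\varphi)U,\;AU\rangle-|AU|^2 .
\]
Choosing $A=\lambda\,(D\psi)$ makes the pairing $\mathrm{Re}\langle DU,AU\rangle$ integrate to a scalar expression involving $\Delta\psi$ (for the radial $\psi$ this is Clifford-compatible). For (1), take $A=c\,x/|x|^{2-m}$, i.e.\ proportional to $D\varphi$. Then $\varphi=|x|^m$, $D\varphi=m|x|^{m-2}x$, and
\[
-\tfrac12 (D\varphi)U\cdot AU\propto |x|^{2m-2}|U|^2 .
\]
Optimizing in $c$ should give exactly $w=m^2|x|^{m-2}$. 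Two checks confirm the normalization: for $m=2$ one gets $w=4$, the Gaussian constant $1/4$; and the test $u$ constant (locally) with $v$ Gaussian-type shows that equality is nearly attained. Sharpness for $\Omega=\mathbb{R}^n\setminus\{0\}$ would come from a sequence $f_k=Du_k$ with $u_k$ orthogonal to $\ker D\cap L^2_\varphi$, for instance $u_k=x\,\chi_k$ or a spherical-harmonic mode. One then checks $\|u_k\|^2/\|f_k\|^2\to1/4$, mirroring the construction in Theorem~\ref{thm:1.2}.

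For (2), $D\varphi=2x_1e_1$ and only one direction is involved, so the Clifford obstruction disappears after commuting $e_1$ past $\partial_j$. The identity gives $\int|DU|^2+x_1^2|U|^2-|U|^2$. A one-dimensional Hermite-type argument in $x_1$, namely $\int|\partial_1U|^2+x_1^2|U|^2\ge\int|U|^2$ combined with the remaining terms via the multiplier $A=e_1$, yields $w=2$. For (3), treat $\varphi$ as a perturbation of $|x|^2$. The Gaussian identity has spare coercivity $4>3$, and the error terms are $O(\varepsilon)(|x|^2+1)|U|^2$. The part $O(\varepsilon)|x|^2|U|^2$ is absorbed by $\tfrac14|\nabla\varphi|^2|U|^2$, and on $|x|\ge1$ the constant part is controlled because $|x|^2\ge1$. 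The main obstacle is pinning down the Clifford multiplier in (1) so that the non-scalar cross terms cancel exactly rather than only up to sign. I would first try $A=\lambda D\varphi$ and verify the cancellation by direct computation with $x\,D$ and the Euler operator.
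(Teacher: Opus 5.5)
Your duality step and the conjugation $U=ve^{-\varphi/2}$ are fine; no density lemma is even needed, since only $v\in C_c^\infty$ enters. The gap is that the coercive inequalities, which carry all the content, are not proved, and the multiplier mechanism you propose cannot prove them. Your claim that $\langle DU,AU\rangle_0$ integrates to a scalar expression in $\Delta\psi$ for $A=\lambda D\psi$ is false. For any vector-valued $A=\sum_iA_ie_i$,
\[
\langle DU,AU\rangle_0=-\frac12\int_\Omega(\operatorname{div}A)\,|U|^2\,dV-\sum_{i\neq j}\int_\Omega A_i\,\langle\partial_jU,e_je_iU\rangle\,dV .
\]
Left multiplication by the bivector $e_je_i$ ($i\neq j$) is skew-adjoint. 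So the last sum is the quadratic form of a first-order operator that is symmetric modulo order zero, with symbol given by left multiplication by $\sum_{i\neq j}A_i\xi_j\,e_je_i$. This symbol is nonzero wherever $A\neq0$, already for $n=2$, so the Clifford term does not vanish in dimension two either. Test with $U=\chi\,\bigl(\cos(N\xi\cdot x)\,p+\sin(N\xi\cdot x)\,q\bigr)$, $p,q\in\mathbb R_n$, and let $N\to\infty$. This shows the term is unbounded above and below on $\{\|U\|_0=1\}$. All remaining terms of $2\langle DU-\frac12(D\varphi)U,AU\rangle_0-\|AU\|_0^2$ are of order zero, so no choice of $\lambda,\psi$ makes your lower bound dominate $\int w|U|^2$.

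For radial $\psi$ the term equals $\lambda\int_\Omega r^{-1}\psi'(r)\langle\Gamma U,U\rangle\,dV$. Here $\Gamma:=-\sum_{i<j}e_ie_j(x_i\partial_j-x_j\partial_i)$ is the spherical Dirac operator, with eigenvalues $-k$ and $k+n-2$. Concretely, for $\varphi=|x|^2$ and $A=-2x$ your lower bound is exactly $2n\|U\|_0^2-4\langle\Gamma U,U\rangle_0$. Dropping the $\Gamma$-term would give $\|D^*_\varphi u\|_\varphi^2\ge 2n\|u\|_\varphi^2$. That is false for $n\ge3$, since $D^*_\varphi x=2|x|^2-n$ and $\|2|x|^2-n\|_\varphi^2=4\|x\|_\varphi^2$. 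So the obstacle you flag at the end is fatal, not a matter of tuning.

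The missing idea is to subtract the square of a second \emph{first-order} operator with the same symbol modulus, rather than a multiplication operator. For $|\YY|=1$ the difference $\|D^*_\varphi u\|_\varphi^2-\|\eta U+\sum_je_j\YY\partial_jU\|_0^2$ has no second-order part. For $\YY=x/|x|$, $\eta=-\frac12m|x|^{m-1}+(2-n)/|x|$, resp.\ $\YY=e_1$, $\eta=-x_1$, the first-order remainders cancel exactly, giving
\[
\|D^*_\varphi u\|_\varphi^2=\bigl\|\,|x|\,D\bigl(|x|^{-2}x\,u\bigr)\bigr\|_\varphi^2+m^2\int_\Omega|x|^{m-2}|u|^2e^{-\varphi}\,dV,
\qquad
\|D^*_\varphi u\|_\varphi^2=\|D(e_1u)\|_\varphi^2+2\|u\|_\varphi^2 .
\]

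The individual parts inherit the problem. In (2) your expansion is off; in fact
\[
\|DU-x_1e_1U\|_0^2=\sum_{j=1}^n\|\partial_jU\|_0^2+\|x_1U\|_0^2+\|U\|_0^2+2\sum_{j\neq1}\int_\Omega x_1\langle e_1e_j\,\partial_jU,U\rangle\,dV .
\]
So $\|U\|_0^2$ enters with a plus sign; with your minus sign the Hermite inequality yields only $\ge0$. The last term is again an indefinite first-order term that commuting $e_1$ past $\partial_j$ does not remove; it is absorbed exactly only by passing to $\|D(e_1u)\|_\varphi^2$.

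In (3) there is no spare $\frac14|\nabla\varphi|^2|U|^2$, and the errors are not of order zero: they contain $\langle DU,\sum_i(a_i-1)x_ie_iU\rangle_0$. Already for $\varphi=|x|^2$ one has $\|D^*_\varphi u\|_\varphi^2=\langle(-\Delta+|x|^2+n-2\Gamma)U,U\rangle_0$. Take $U=g(|x|)\,|x|^{-k}xP_{k-1}(x)$, with $P_{k-1}$ homogeneous monogenic and $g$ a unit-width bump at $\sqrt k$. This makes $\|D^*_\varphi u\|_\varphi^2=O(\|U\|_0^2)$ uniformly in $k$, while $\|\,|x|U\|_0^2\approx k\|U\|_0^2$. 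The perturbation must instead be done inside the twisted identity with $\YY=D\varphi/|D\varphi|$. There the errors are $O(\varepsilon)|U|^2$ and $O(\varepsilon|x|^{-1})|\nabla U|\,|U|$. The latter requires a separate bound of $\int_\Omega|x|^{-2}|\nabla U|^2\,dV$ by $\|D^*_\varphi u\|_\varphi^2+\|u\|_\varphi^2$ on $|x|\ge1$.

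Finally, $u=x\chi_k$ does not witness sharpness for $n\ge3$: for $\varphi=|x|^2$, $\|x\|_\varphi^2/\|Dx\|_\varphi^2=1/(2n)$. The extremal is $D^*_\varphi x=2|x|^2-n$. Either take $f=4x$ with minimal solution $u=2|x|^2-n$, or, as in the paper, test the dual inequality $\|w\|_\varphi^2\le C\|D^*_\varphi w\|_\varphi^2$ on $w=\chi_kx$.
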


Part~(1) includes the Gaussian estimate with sharp constant $1/4$. Part~(2) treats the quadratic weight $x_{1}^{2}$, and part~(3) gives stability under small anisotropic quadratic perturbations on exterior domains. Via the factorization $\Delta=-D^{2}$, these estimates also yield weighted solvability for the Poisson equation.

The following corollary records the corresponding consequence for the Poisson equation.

\begin{corollary}\label{cor:1.4}
Let $\varphi=|x|^{2}$ and let $\Omega \subset \mathbb{R}^{n}\setminus\{0\}$ be a domain. For every $f \in L^{2}_{\varphi}(\Omega,\mathbb{R}_{n})$ there exists $u \in L^{2}_{\varphi}(\Omega,\mathbb{R}_{n})$ such that
\[
\Delta u=f \quad \text{in }\Omega
\]
and
\[
\|u\|_{\varphi}^{2} \le \frac{1}{16}\,\|f\|_{\varphi}^{2}.
\]
If $f$ is real-valued, then $u$ can be chosen real-valued with the same estimate.
\end{corollary}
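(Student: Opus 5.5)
Corollary~\ref{cor:1.4} follows by applying Theorem~\ref{thm:1.3}(1) twice, using the factorization $\Delta=-D^{2}$. For $\varphi=|x|^{2}$ we have $m=2$, so $|x|^{m-2}=1$, and the hypothesis of part~(1) reduces to $f\in L^{2}_{\varphi}(\Omega,\mathbb{R}_{n})$. The conclusion is then \eqref{eq:sharp-estimate-gauss-weight}: for every such $f$ there is $w$ with $Dw=f$ and $\|w\|_{\varphi}^{2}\le \tfrac14\|f\|_{\varphi}^{2}$. Here $\Omega\subset\mathbb{R}^{n}\setminus\{0\}$, as part~(1) requires.

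\textbf{Construction.}
\begin{enumerate}
\item Solve $Dw=-f$ with $\|w\|_{\varphi}^{2}\le\tfrac14\|f\|_{\varphi}^{2}$.
\item Since $w\in L^{2}_{\varphi}(\Omega,\mathbb{R}_{n})$, apply part~(1) again to obtain $u$ with $Du=w$ and $\|u\|_{\varphi}^{2}\le\tfrac14\|w\|_{\varphi}^{2}\le\tfrac1{16}\|f\|_{\varphi}^{2}$.
\item In the distributional sense, $\Delta u=-D^{2}u=-Dw=f$.
\end{enumerate}
The identity $\Delta=-D^{2}$ holds for distributions, because $D$ has constant coefficients and $e_ie_j+e_je_i=-2\delta_{ij}$. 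So the equation holds in $\mathcal{D}'(\Omega)$, which is the sense in which Theorem~\ref{thm:1.3} asserts $Du=f$.

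\textbf{Real-valued data.} Let $f$ be real-valued and let $u$ be the solution produced above. Take $u_{0}$ to be the scalar (real) part of $u$, i.e.\ the projection onto the $e_\emptyset$ component. Since $\Delta$ acts componentwise on Clifford-valued functions, $\Delta u_{0}$ equals the scalar part of $\Delta u=f$, which is $f$. The projection is an orthogonal projection for the pointwise Clifford norm, so $|u_{0}|\le|u|$ pointwise, and therefore $\|u_{0}\|_{\varphi}^{2}\le\|u\|_{\varphi}^{2}\le\tfrac1{16}\|f\|_{\varphi}^{2}$.

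\textbf{Points to verify.} The argument has no real obstacle beyond two checks.
\begin{itemize}
\item The solution $w$ of the first step must lie in the class to which part~(1) applies a second time. It does, because part~(1) only requires $w\in L^{2}_{\varphi}$ when $m=2$.
\item The norm on $\mathbb{R}_{n}$ must be the standard Euclidean norm on the $2^{n}$ coefficients. Then component projection is contractive, and $D^{2}=-\Delta$ holds on each component.
\end{itemize}
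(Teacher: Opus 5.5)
Your proposal is correct and follows essentially the same route as the paper. Both arguments apply the Gaussian case of Theorem~\ref{thm:1.3}(1) twice, compose the two solutions through $\Delta=-D^{2}$, and pass to the scalar part for real-valued data; the only difference is that you place the minus sign in the first step ($Dw=-f$, then $Du=w$) rather than the second, which is immaterial.
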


The proof relies on the interaction between the weighted adjoint of the Dirac operator and Clifford multiplication, whose non-commutativity makes the order of the factors essential. We therefore work with the conjugated unknown $U:=ue^{-\varphi/2}$ and introduce auxiliary scalar and Clifford-valued multipliers. This leads to a weighted identity from which the radial, single-quadratic, and perturbed-Gaussian estimates are derived.

The paper is organized as follows. Section~\ref{section:2} recalls the basic Clifford algebra background. Section~\ref{section:3} sets up the weighted $L^{2}$ framework for the Dirac operator and its adjoint. Section~\ref{section:4} reviews the classical Ji--Zhu theory and the structural condition underlying it. Section~\ref{section:5} proves the obstruction in higher dimensions. Section~\ref{section:6} develops the weighted identity and the corresponding coercive estimates. Section~\ref{section:7} derives the existence theorems, and Section~\ref{section:8} contains concluding remarks.

\section{Clifford algebras and Dirac operators}\label{section:2}

This section recalls the standard algebraic and analytic background for real Clifford algebras and the Euclidean Dirac operator; see \cite{BrackxDelangheSommen82,DelangheSommenSoucek92,GilbertMurray91}.

Let $\mathbb R^n$ be equipped with its standard Euclidean structure and orthonormal basis
$\{e_1,\dots,e_n\}$. The \emph{real Clifford algebra} $\mathbb R_n$ is the associative algebra over
$\mathbb R$ generated by $e_1,\dots,e_n$ and $1$ subject to the relations
\begin{equation}\label{eq:Clifford-relations}
 e_j^2 = -1,\qquad j=1,\dots,n,
\end{equation}
and
\begin{equation}\label{eq:Clifford-anticommutation}
 e_j e_k + e_k e_j = 0,\qquad 1 \le j < k \le n.
\end{equation}
Thus $e_j e_k = - e_k e_j$ whenever $j\ne k$, and the algebra is generated by $1$ and the $e_j$
modulo these relations.

Notable examples of Clifford algebras include $\mathbb{R}_1\cong \mathbb{C}$ and $\mathbb{R}_2\cong \mathbb{H}$.
As a real vector space, $\mathbb R_n$ has dimension $2^n$. A convenient basis is given by
the ordered monomials
\[
 e_A := e_{j_1} e_{j_2} \cdots e_{j_\ell},
\]
where $A = \{j_1<\dots<j_\ell\}\subset\{1,\dots,n\}$ and by convention
$e_\emptyset := 1$. Every element $f\in\mathbb R_n$ can be written uniquely in the form
\begin{equation}\label{eq:Clifford-expansion}
 f = \sum_A f_A e_A,
\end{equation}
with real coefficients $f_A\in\mathbb R$. The scalar part of $f$ is the coefficient of $e_\emptyset$,
which we denote by
\[
 \operatorname{Re}(f) := f_\emptyset.
\]

We write the vector variable as
\[x:=\sum_{j=1}^n x_j e_j.\]

There is a standard conjugation on $\mathbb R_n$, defined as the unique algebra
anti-automorphism
\[\overline{\phantom{f}} : \mathbb R_n\to\mathbb R_n\]
 such that
\[
 \overline{1} = 1,\qquad \overline{e_j} = - e_j\quad (j=1,\dots,n),
\]
and
\[
 \overline{fg} = \overline{g}\,\overline{f},\qquad f,g\in\mathbb R_n.
\]
In particular, for a basis element $e_A$ with $|A|=\ell$ one checks that
\[
 \overline{e_A} = (-1)^{\ell(\ell+1)/2} e_A.
\]

We equip $\mathbb R_n$ with the $\mathbb R$-valued inner product
\begin{equation}\label{eq:Clifford-inner-product}
 \langle f,g\rangle := \operatorname{Re}(f\overline{g}),\qquad f,g\in\mathbb R_n.
\end{equation}
Using the expansion~\eqref{eq:Clifford-expansion} and the orthogonality of the basis
$\{e_A\}_A$ with respect to~\eqref{eq:Clifford-relations}--\eqref{eq:Clifford-anticommutation}, one
obtains
\[
 \langle f,g\rangle = \sum_A f_A g_A.
\]
Thus the associated norm
\begin{equation}\label{eq:Clifford-norm}
 |f|^2 := \langle f,f\rangle = \sum_A |f_A|^2
\end{equation}
agrees with the Euclidean norm on $\mathbb R^{2^n}$ under the identification
$f\longleftrightarrow (f_A)_A$.

As an algebra, $\mathbb R_n$ is not normed in the multiplicative sense: in general one does not
have $|fg| = |f|\,|g|$. The next lemma records a universal bound and a special case in which
multiplicativity does hold.

\begin{lemma}\label{lem:Clifford-norm}
For all $f,g\in\mathbb R_n$ one has
\[
 |fg|^2\le 2^n |f|^2\,|g|^2.
\]
Moreover, if $f$ belongs to the \emph{paravector} subspace
\[
 \mathrm{span}_\mathbb R\{1,e_1,\dots,e_n\}\subset \mathbb R_n,
\]
then
\[
 |fg| = |gf| = |f|\,|g|.
\]
\end{lemma}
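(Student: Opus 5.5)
The universal bound follows from expanding in the basis and applying Cauchy--Schwarz. The paravector case follows from the identity $f\overline{f}=|f|^2$ for paravectors together with the cyclicity of the scalar part.

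\emph{Universal bound.} Write $f=\sum_A f_Ae_A$ and $g=\sum_B g_Be_B$. Then
\[
fg=\sum_{A,B} f_Ag_B\, e_Ae_B ,
\]
and each product $e_Ae_B=\pm e_{A\triangle B}$. Hence for a fixed multi-index $C$, the coefficient of $e_C$ in $fg$ is
\[
(fg)_C=\sum_{A}\pm f_A\, g_{A\triangle C},
\]
since $B$ is determined by $A$ and $C$. By Cauchy--Schwarz,
\[
|(fg)_C|^2\le \sum_A f_A^2\sum_A g_{A\triangle C}^2=|f|^2|g|^2 .
\]
Summing over the $2^n$ multi-indices $C$ gives $|fg|^2\le 2^n|f|^2|g|^2$.

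\emph{Paravector case.} Let $f=f_0+\sum_j f_je_j$. Then $\overline f=f_0-\sum_j f_je_j$. Expanding, the mixed terms $f_0 f_j e_j$ cancel, and the anticommutation relation kills the cross terms $e_je_k$ with $j\ne k$. Using $e_j^2=-1$, we get
\[
f\overline f=\overline f f=f_0^2+\sum_j f_j^2=|f|^2,
\]
a real scalar that commutes with everything. Therefore
\[
|fg|^2=\operatorname{Re}(fg\,\overline{g}\,\overline{f}).
\]
Here we use the trace property $\operatorname{Re}(ab)=\operatorname{Re}(ba)$, which holds because $e_Ae_B$ has nonzero scalar part only when $A=B$, and in that case $e_Ae_A$ is symmetric in the two factors. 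Applying it,
\[
\operatorname{Re}(fg\,\overline{g}\,\overline{f})=\operatorname{Re}(\overline{f}f\,g\overline{g})=|f|^2\operatorname{Re}(g\overline g)=|f|^2|g|^2 .
\]

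For $gf$, we have directly
\[
|gf|^2=\operatorname{Re}(g f\overline f\,\overline g)=|f|^2\operatorname{Re}(g\overline g)=|f|^2|g|^2 ,
\]
since $f\overline f=|f|^2$ is a real scalar.

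The main point requiring care is the trace identity $\operatorname{Re}(ab)=\operatorname{Re}(ba)$, which rests on $\operatorname{Re}(e_Ae_B)=0$ for $A\ne B$. It is worth stating this explicitly. Note that associativity alone lets us regroup the product in the $gf$ case without invoking the trace identity at all.
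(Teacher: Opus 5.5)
Your proof is correct and follows essentially the same route as the paper. For the universal bound you apply Cauchy--Schwarz to each coefficient $(fg)_C=\sum_A \pm f_A g_{A\triangle C}$, using that $A\mapsto A\triangle C$ permutes the multi-indices. For the paravector case you use $f\overline{f}=\overline{f}f=|f|^2$ together with the cyclicity $\operatorname{Re}(ab)=\operatorname{Re}(ba)$. Your observation that the $gf$ case needs only associativity, since $f\overline{f}$ already sits in the middle of $gf\,\overline{f}\,\overline{g}$, makes the paper's ``obtained in the same way'' explicit.
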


\begin{proof}
Let $f = \sum_A f_A e_A$ and $g = \sum_B g_B e_B$. Then
\[
 fg = \sum_{A,B} f_A g_B e_A e_B = \sum_C h_C e_C.
\]
Fix a multi-index $C$. For each multi-index $A$ there is a unique multi-index
$B = A\triangle C$ such that
\[
 e_A e_B = \sigma(A,C)e_C,
 \qquad \sigma(A,C)\in\{\pm1\},
\]
where $A\triangle C$ denotes the symmetric difference. Hence
\[
 h_C = \sum_A \sigma(A,C) f_A g_{A\triangle C}.
\]
Therefore,
\[
 |h_C|
 \le \sum_A |f_A|\,|g_{A\triangle C}|
 \le \Bigl(\sum_A |f_A|^2\Bigr)^{1/2}
    \Bigl(\sum_A |g_{A\triangle C}|^2\Bigr)^{1/2}
 = |f|\,|g|,
\]
because $A\mapsto A\triangle C$ is a permutation of the set of multi-indices.
Summing over the $2^n$ possible values of $C$, we obtain
\[
 |fg|^2 = \sum_C |h_C|^2 \le 2^n |f|^2 |g|^2,
\]
which proves the first assertion.

For the second statement, assume
\[
 f = f_0 + \sum_{j=1}^n f_j e_j
\]
with real coefficients $f_0,f_1,\dots,f_n$. A direct computation gives
\[
 f\overline{f} = \overline{f}\,f
 = f_0^2 + \sum_{j=1}^n f_j^2 = |f|^2 \in \mathbb R.
\]
We also use the elementary identity
\[
 \operatorname{Re}(ab)=\operatorname{Re}(ba),
 \qquad a,b\in\mathbb R_n,
\]
which follows by expanding $a$ and $b$ in the basis $\{e_A\}_A$: only the diagonal terms
$A=B$ contribute to the scalar part. Then, for arbitrary $g\in\mathbb R_n$,
\[
 |fg|^2
 = \operatorname{Re}\bigl((fg)\overline{fg}\bigr)
 = \operatorname{Re}\bigl(fg\overline{g}\,\overline{f}\bigr)
 = \operatorname{Re}\bigl(g\overline{g}\,\overline{f}f\bigr)
 = |f|^2\operatorname{Re}(g\overline{g})
 = |f|^2|g|^2.
\]
Hence $|fg| = |f|\,|g|$. The equality $|gf| = |f|\,|g|$ is obtained in the same way.
\end{proof}

\section{The Dirac operator and its $L^2$ theory}\label{section:3}

This section introduces the Dirac operator on a domain $\Omega\subset\mathbb R^n$ and places it in a weighted $L^2$ framework adapted to the Clifford algebra structure. This will be the analytic setting for the estimates and existence results proved later.

\subsection{Definitions and basic properties}

Let $\Omega\subset\mathbb R^n$ be a domain. We define the (left) Dirac operator as follows.

\begin{definition}[Dirac operator]\label{def:Dirac}
The Dirac operator on $\Omega$ is defined by
\[
 D : C^1(\Omega,\mathbb R_n) \longrightarrow C^0(\Omega,\mathbb R_n),
 \qquad
 D u = \sum_{j=1}^n e_j\, \partial_j u.
\]
\end{definition}

To formulate weighted $L^2$ estimates, we introduce the following Hilbert space.

\begin{definition}[Weighted $L^2$ space]\label{def:L2-weighted}
Let $\varphi\in C(\Omega,\mathbb R)$ be a real-valued weight function.
The weighted space $L^2_\varphi(\Omega,\mathbb R_n)$ consists of all measurable functions
$u:\Omega\to\mathbb R_n$ such that
\[
 \|u\|_\varphi^2
 := \int_\Omega |u|^2 e^{-\varphi}\,dV < \infty.
\]
The associated inner product is
\begin{equation}\label{eq:weighted-inner}
 \langle u,v\rangle_\varphi
 := \int_\Omega \operatorname{Re}\bigl( u \,\overline{v}\bigr)\,e^{-\varphi}\,dV
  = \int_\Omega \sum_A u_A\,v_A\,e^{-\varphi}\,dV,
\end{equation}
where $u = \sum_A u_A e_A$ and $v = \sum_A v_A e_A$ are the coefficient expansions in the
basis $\{e_A\}$. When $\varphi=0$, we write $\langle\cdot,\cdot\rangle_0$ and $\|\cdot\|_0$
for the corresponding unweighted inner product and norm.
\end{definition}

The next lemma summarizes the basic compatibility properties of
$\langle\cdot,\cdot\rangle_\varphi$ with respect to multiplication by scalar functions and by
Clifford generators.

\begin{lemma}\label{lem:weighted-inner-basic}
Let $f,g\in L^2_\varphi(\Omega,\mathbb R_n)$.
\begin{enumerate}
\item If $h\in C(\Omega,\mathbb R)$, then
\[
 \langle f,hg\rangle_\varphi = \langle hf,g\rangle_\varphi
\]
whenever the integrals are finite.
\item For each $j=1,\dots,n,$ one has
\[
 \langle f, e_j g\rangle_\varphi = - \langle e_j f, g\rangle_\varphi.
\]
\item Let
\[
 \Psi = \psi_0 + \sum_{j=1}^n \psi_j e_j,
 \qquad \psi_0,\psi_j\in C^\infty(\Omega,\mathbb R).
\]
Then for all $f,g\in C_c^\infty(\Omega,\mathbb R_n)$,
\[
 \langle f,\Psi g\rangle_\varphi = \langle \overline{\Psi}\, f, g\rangle_\varphi,
 \qquad
 \overline{\Psi}=\psi_0-\sum_{j=1}^n \psi_j e_j.
\]
\end{enumerate}
\end{lemma}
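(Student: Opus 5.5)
All three statements reduce to pointwise identities for the real inner product $\langle a,b\rangle=\operatorname{Re}(a\overline{b})$ on $\mathbb R_n$. Once these hold pointwise, we multiply by $e^{-\varphi}$ and integrate over $\Omega$. The weight is a positive scalar, so it passes through every step. The plan is to prove the pointwise facts first, then integrate.

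For (1), $h(x)$ is real, so it is central in $\mathbb R_n$ and fixed by conjugation. Hence
\[
\operatorname{Re}\bigl(f\,\overline{hg}\bigr)=\operatorname{Re}\bigl(h\,f\overline{g}\bigr)=\operatorname{Re}\bigl((hf)\overline{g}\bigr)
\]
pointwise, and integrating against $e^{-\varphi}$ gives the claim. Finiteness of the integrals is assumed, so nothing more is needed.

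For (2), we use $\overline{e_jg}=\overline{g}\,\overline{e_j}=-\overline{g}e_j$. This gives
\[
\langle f,e_jg\rangle=-\operatorname{Re}\bigl(f\overline{g}e_j\bigr)=-\operatorname{Re}\bigl(e_jf\overline{g}\bigr),
\]
where the last step is the cyclic identity $\operatorname{Re}(ab)=\operatorname{Re}(ba)$ established in the proof of Lemma~\ref{lem:Clifford-norm}, applied with $a=f\overline g$ and $b=e_j$. The right-hand side is $-\langle e_jf,g\rangle$. Both integrals are finite, because $|e_jg|=|g|$ by the paravector case of Lemma~\ref{lem:Clifford-norm}.

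For (3), the left side is linear in $\Psi$, so we split $\Psi g=\psi_0g+\sum_j\psi_je_jg$. The scalar term is handled by (1): $\langle f,\psi_0g\rangle_\varphi=\langle\psi_0f,g\rangle_\varphi$. For each vector term we apply (1) to move $\psi_j$ and then (2) to move $e_j$:
\[
\langle f,\psi_je_jg\rangle_\varphi=\langle\psi_jf,e_jg\rangle_\varphi=-\langle e_j\psi_jf,g\rangle_\varphi.
\]
Summing over $j$ yields $\langle\overline{\Psi}f,g\rangle_\varphi$. Since $f,g\in C_c^\infty$ and the $\psi$'s are smooth, every integrand is compactly supported and continuous, so all integrals are finite.

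The only point needing care is the order of Clifford factors in (2). Conjugation reverses products, so the $e_j$ ends up on the right of $\overline g$ and must be brought around to the left of $f$. The cyclicity of $\operatorname{Re}$ is exactly what permits this; without it one would obtain a right-multiplication statement instead. Everything else is routine.
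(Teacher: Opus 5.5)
Your proposal is correct and follows the paper's proof essentially step for step: (1) comes from the reality of $h$, (2) from $\overline{e_j g}=-\overline{g}\,e_j$ combined with the cyclic identity $\operatorname{Re}(ab)=\operatorname{Re}(ba)$, and (3) from splitting $\Psi$ into its scalar and vector parts and applying (1) and (2) termwise. Your remarks on finiteness of the integrals (e.g.\ $|e_jg|=|g|$) are a harmless addition that the paper leaves implicit.
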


\begin{proof}
Assertion (1) follows directly from the definition~\eqref{eq:weighted-inner} and the fact that
$h$ is real-valued. For (2), the anticommutation relations~\eqref{eq:Clifford-anticommutation}
imply $\overline{e_j}=-e_j$. Using also the elementary identity
$\operatorname{Re}(ab)=\operatorname{Re}(ba)$ for $a,b\in\mathbb R_n$, we obtain
\[
 \operatorname{Re}(f\,\overline{e_j g})
 = \operatorname{Re}(f\,\overline{g}\,\overline{e_j})
 = -\operatorname{Re}(f\,\overline{g} e_j)
 = -\operatorname{Re}(e_j f\,\overline{g})
 = \operatorname{Re}\bigl((-e_j f)\,\overline{g}\bigr).
\]
After integration this gives
\[
 \langle f, e_j g\rangle_\varphi = -\langle e_j f, g\rangle_\varphi.
\]
Finally, if
\[
 \Psi = \psi_0 + \sum_{j=1}^n \psi_j e_j,
\]
then by (1) and (2),
\[
 \langle f,\Psi g\rangle_\varphi
 = \langle \psi_0 f,g\rangle_\varphi
  + \sum_{j=1}^n \langle -\psi_j e_j f,g\rangle_\varphi
 = \langle \overline{\Psi}\,f,g\rangle_\varphi.
\]
\end{proof}

\subsection{The maximal operator defined by $D$ and its adjoint}

We next realize $D$ as an unbounded operator on the Hilbert space $L^2_\varphi(\Omega,\mathbb R_n)$. This formulation will be used in the functional-analytic arguments below.

\subsubsection*{Formal adjoint}

 The formal adjoint of $D$ with respect to
$\langle\cdot,\cdot\rangle_\varphi$ is defined as follows.

\begin{definition}[Formal adjoint]\label{def:formal-adjoint}
The formal adjoint $\delta_\varphi$ of $D$ is the differential operator characterized by
\begin{equation}\label{eq:formal-adjoint-def}
 \langle \delta_\varphi u, v\rangle_\varphi
 = \langle u, D v\rangle_\varphi,
 \qquad u,v\in C_c^\infty(\Omega,\mathbb R_n).
\end{equation}
\end{definition}

The next proposition gives an explicit expression for $\delta_\varphi$.

\begin{proposition}\label{prop:formal-adjoint}
Assume $\varphi\in C^1(\Omega,\mathbb R)$. Then the formal adjoint of $D$ with respect to
$\langle\cdot,\cdot\rangle_\varphi$ is given by
\begin{equation}\label{eq:formal-adjoint-formula}
 \delta_\varphi u =  e^{\varphi} D\bigl(u e^{-\varphi}\bigr)
          =  D u - (D\varphi)\,u.
\end{equation}
Here $D\varphi := \sum_{j=1}^n e_j \partial_j \varphi$ is viewed as a Clifford-valued
multiplication operator.
\end{proposition}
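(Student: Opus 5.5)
The plan is to integrate by parts against test functions, using only Lemma~\ref{lem:weighted-inner-basic}(2) and the divergence theorem. Fix $u,v\in C_c^\infty(\Omega,\mathbb R_n)$ and expand
\[
\langle u,Dv\rangle_\varphi=\sum_{j=1}^n\int_\Omega \operatorname{Re}\bigl(u\,\overline{e_j\partial_j v}\bigr)e^{-\varphi}\,dV .
\]
By Lemma~\ref{lem:weighted-inner-basic}(2), applied pointwise (the identity $\operatorname{Re}(f\,\overline{e_jg})=\operatorname{Re}((-e_jf)\overline g)$ was established pointwise in its proof), each summand equals
\[
-\int_\Omega \operatorname{Re}\bigl(e_j u\,\overline{\partial_j v}\bigr)e^{-\varphi}\,dV
=-\int_\Omega \operatorname{Re}\bigl((e_j u e^{-\varphi})\,\partial_j\overline{v}\bigr)\,dV,
\]
where we use that conjugation commutes with $\partial_j$, since it acts only on the constant basis elements.

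Next integrate by parts in $x_j$. The function $\operatorname{Re}\bigl((e_jue^{-\varphi})\overline v\bigr)$ is $C^1$ with compact support in $\Omega$; this needs only $\varphi\in C^1$. Hence the boundary terms vanish, and each summand becomes
\[
\int_\Omega \operatorname{Re}\bigl(\partial_j(e_jue^{-\varphi})\,\overline v\bigr)\,dV .
\]
Summing over $j$ and using that $e_j$ is constant gives
\[
\langle u,Dv\rangle_\varphi=\int_\Omega\operatorname{Re}\bigl(D(ue^{-\varphi})\,\overline v\bigr)dV=\bigl\langle e^{\varphi}D(ue^{-\varphi}),v\bigr\rangle_\varphi .
\]
This proves the first equality in \eqref{eq:formal-adjoint-formula}. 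Uniqueness of $\delta_\varphi$ holds because $C_c^\infty$ is dense in $L^2_\varphi$, and $e^{\varphi}D(ue^{-\varphi})$ is continuous.

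For the second equality, apply the Leibniz rule:
\[
\partial_j(ue^{-\varphi})=(\partial_ju)e^{-\varphi}-(\partial_j\varphi)ue^{-\varphi}.
\]
Since $\partial_j\varphi$ is real and commutes with everything, multiplying by $e_j$ and summing gives
\[
D(ue^{-\varphi})=e^{-\varphi}\bigl(Du-(D\varphi)u\bigr).
\]
The order of factors matters here: $D\varphi$ must stand on the left of $u$, because $D$ acts from the left.

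The only delicate point is this left/right bookkeeping with the noncommutative product, namely making sure the sign from $\overline{e_j}=-e_j$ and the cyclicity of $\operatorname{Re}$ are applied in the right order. Everything else is routine.
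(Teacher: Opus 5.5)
Your proposal is correct and follows essentially the same route as the paper: you move $e_j$ across the inner product via Lemma~\ref{lem:weighted-inner-basic}(2), integrate by parts componentwise with the weight absorbed into the $C^1_c$ factor $ue^{-\varphi}$, and then apply the Leibniz rule while keeping $D\varphi$ on the left of $u$. The paper only organizes the first step slightly differently: it proves the unweighted case $\delta_0=D$ first and then applies it to $w=ue^{-\varphi}$, which amounts to the same computation.
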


\begin{proof}
We first consider the unweighted case $\varphi\equiv 0$. Let
$u,v\in C_c^\infty(\Omega,\mathbb R_n)$. Since
\[
 Dv=\sum_{j=1}^n e_j\partial_j v,
\]
Lemma~\ref{lem:weighted-inner-basic}(2) gives
\[
 \langle u,Dv\rangle_0
 = \sum_{j=1}^n \langle u,e_j\partial_j v\rangle_0
 = -\sum_{j=1}^n \langle e_j u,\partial_j v\rangle_0.
\]
Because $u$ and $v$ are compactly supported and $e_j$ is constant, integration by parts
componentwise yields
\[
 -\langle e_j u,\partial_j v\rangle_0
 = \int_\Omega \operatorname{Re}\!\bigl(\partial_j(e_j u)\,\overline{v}\bigr)\,dV
 = \langle e_j\partial_j u, v\rangle_0.
\]
Summing over $j$ we obtain
\[
 \langle u,Dv\rangle_0 = \langle Du,v\rangle_0,
\]
so $\delta_0=D$.

Now let $\varphi\in C^1(\Omega,\mathbb R)$ be arbitrary and set
\[
 w:=u e^{-\varphi}.
\]
Since $u\in C_c^\infty(\Omega,\mathbb R_n)$ and $\varphi\in C^1(\Omega,\mathbb R)$, we have
$w\in C_c^1(\Omega,\mathbb R_n)$. Using that $e^{-\varphi}$ is real-valued and applying the same
componentwise integration-by-parts argument as above to $w$ and $v$, we get
\[
 \langle u,Dv\rangle_\varphi
 = \int_\Omega \operatorname{Re}\bigl(u\,\overline{Dv}\bigr)e^{-\varphi}\,dV
 = \langle w,Dv\rangle_0
 = \langle Dw,v\rangle_0
 = \left\langle e^{\varphi}Dw,v\right\rangle_\varphi.
\]
Therefore \eqref{eq:formal-adjoint-def} holds with
\[
 \delta_\varphi u = e^{\varphi}D(u e^{-\varphi}).
\]

Finally, the Leibniz rule gives
\[
 D(u e^{-\varphi})
 = \sum_{j=1}^n e_j\partial_j(u e^{-\varphi})
 = (Du)e^{-\varphi} + \sum_{j=1}^n e_j u\,\partial_j(e^{-\varphi}).
\]
Since $\partial_j(e^{-\varphi})=-(\partial_j\varphi)e^{-\varphi}$ is scalar-valued, it commutes with
Clifford multiplication, and therefore
\[
 D(u e^{-\varphi})
 = (Du)e^{-\varphi} - \sum_{j=1}^n e_j(\partial_j\varphi)u\,e^{-\varphi}
 = \bigl(Du-(D\varphi)u\bigr)e^{-\varphi}.
\]
Multiplying by $e^{\varphi}$ yields
\[
 \delta_\varphi u = Du-(D\varphi)u.
\]
In particular, when $\varphi\equiv 0$, this reduces to $\delta_0=D$.
\end{proof}
\subsubsection*{The maximal operator defined by $D$ and its Hilbert adjoint}

We now introduce the maximal operator defined by $D$ in $L^2_\varphi(\Omega,\mathbb R_n)$ and
relate its Hilbert space adjoint to the formal adjoint $\delta_\varphi$.
The construction is completely analogous to the general theory of first-order systems of partial differential equations; see, for example, \cite{Hormander55,Hormander73}.

\begin{definition}[Maximal operator]\label{def:maximal-Dirac}
The \emph{maximal} differential operator defined by $D$ is the unbounded operator
\[
 D : L^2_\varphi(\Omega,\mathbb R_n) \longrightarrow L^2_\varphi(\Omega,\mathbb R_n),
\]
with domain
\[
 \operatorname{Dom}(D)
 := \bigl\{ u\in L^2_\varphi(\Omega,\mathbb R_n) : D u \in L^2_\varphi(\Omega,\mathbb R_n)
   \text{ in the sense of distributions} \bigr\}.
\]
For $u\in\operatorname{Dom}(D)$, the distribution $Du$ is defined by
\[
 \langle Du, v\rangle_0 = \langle u, Dv\rangle_0,\qquad \text{for all }
 v\in C_c^\infty(\Omega,\mathbb R_n),
\]
where $\langle\cdot,\cdot\rangle_0$ denotes the unweighted $L^2$ inner product, and $\|v\|_0^2:=\langle v,v\rangle_0$.
\end{definition}

It is standard that $D$ with this domain is densely defined and closed in
$L^2_\varphi(\Omega,\mathbb R_n)$. Its Hilbert adjoint, denoted by $D^\ast_\varphi$, is defined as follows.
\begin{definition}[Hilbert adjoint of maximal operator]
  An element $v$ belongs to $\operatorname{Dom}(D_\varphi^\ast)$ if there exists $f\in L^2_\varphi(\Omega,\mathbb{R}_n)$ such that
  \[\langle Du,v\rangle_\varphi=\langle u,f\rangle_\varphi \qquad \text{ for all } u\in \operatorname{Dom}(D).\]
For such $v$, we define $D^\ast_\varphi v=f$.
\end{definition}

The next lemma identifies the Hilbert adjoint with the formal adjoint on sufficiently regular functions and records the boundary trace condition encoded in the domain of $D_\varphi^*$.

\begin{lemma}\label{lem:adjoint-and-boundary}
Let $\Omega\subset\mathbb R^n$ be a bounded domain with $C^1$ boundary and
$\varphi\in C^1(\overline{\Omega},\mathbb R)$. If
\[
 u\in C^1(\overline{\Omega},\mathbb R_n)\cap\operatorname{Dom}(D_\varphi^*),
\]
then $D_\varphi^* u = \delta_\varphi u$ in $L^2_\varphi(\Omega,\mathbb R_n)$. Moreover,
\[
 u|_{\partial\Omega} = 0.
\]
\end{lemma}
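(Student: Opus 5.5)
The proof compares the defining relation of $D_\varphi^*$ with a Green formula for $D$, tested first against compactly supported functions and then against functions that are free at the boundary.

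\emph{Step 1: identify $D_\varphi^*u$ with $\delta_\varphi u$.} Since $C_c^\infty(\Omega,\mathbb R_n)\subset\operatorname{Dom}(D)$, the defining relation of $D_\varphi^*$ gives
\[
\langle Dv,u\rangle_\varphi=\langle v,D_\varphi^*u\rangle_\varphi \qquad\text{for all } v\in C_c^\infty(\Omega,\mathbb R_n).
\]
The computation in Proposition~\ref{prop:formal-adjoint} only uses that one factor is compactly supported and the other is $C^1$. It therefore still applies to $u\in C^1(\overline\Omega)$ and yields
\[
\langle Dv,u\rangle_\varphi=\langle v,\delta_\varphi u\rangle_\varphi .
\]
Here we use the symmetry of $\langle\cdot,\cdot\rangle_\varphi$ and the fact that $\delta_\varphi u=Du-(D\varphi)u$ is continuous on $\overline\Omega$, hence lies in $L^2_\varphi$ because $\Omega$ is bounded. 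Subtracting the two identities shows that $D_\varphi^*u-\delta_\varphi u$ is orthogonal to $C_c^\infty$. Density of $C_c^\infty$ in $L^2_\varphi$ (the weight is bounded above and below on $\overline\Omega$) then gives $D_\varphi^*u=\delta_\varphi u$.

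\emph{Step 2: Green's formula with boundary term.} Take $v\in C^1(\overline\Omega,\mathbb R_n)$; it lies in $\operatorname{Dom}(D)$. With $w=ue^{-\varphi}\in C^1(\overline\Omega)$ and $\nu=\sum_j\nu_je_j$ the outward unit normal, the divergence theorem applied componentwise, as in the proof of Proposition~\ref{prop:formal-adjoint}, gives
\[
\langle Dv,u\rangle_\varphi=\int_\Omega\operatorname{Re}(Dv\,\overline w)\,dV=\langle v,\delta_\varphi u\rangle_\varphi+\int_{\partial\Omega}\operatorname{Re}\bigl(\nu v\,\overline{w}\bigr)\,dS .
\]
The sign convention for the boundary term needs to be checked, using $\operatorname{Re}(e_j a\overline b)$ and Lemma~\ref{lem:weighted-inner-basic}(2). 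The adjoint relation together with Step 1 makes the left-hand side equal to $\langle v,\delta_\varphi u\rangle_\varphi$. Hence
\[
\int_{\partial\Omega}\operatorname{Re}\bigl(\nu v\,\overline u\bigr)e^{-\varphi}\,dS=0 \qquad\text{for all } v\in C^1(\overline\Omega,\mathbb R_n).
\]

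\emph{Step 3: conclude $u|_{\partial\Omega}=0$.} This is the main point: we must choose $v$ so that the boundary integrand becomes $|u|^2$. Let $N$ be a $C^1$ extension of $\nu$ to a neighbourhood of $\partial\Omega$, or more simply approximate $\nu$ by smooth vector fields. Set $v=-\eta N u$, which on $\partial\Omega$ equals $-\eta\nu u$, with $\eta\ge0$ a cutoff equal to $1$ near $\partial\Omega$. Because $\nu^2=-|\nu|^2=-1$, we get $\nu v=\eta u$ on $\partial\Omega$, so the boundary integral becomes $\int_{\partial\Omega}|u|^2e^{-\varphi}\,dS=0$. Since $u$ is continuous, $u=0$ on $\partial\Omega$.

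The regularity issue is that $\nu$ is only $C^0$ when $\partial\Omega$ is $C^1$. We handle it by taking a sequence $N_k\in C^1(\overline\Omega)$ of vector fields converging uniformly to $\nu$ on $\partial\Omega$. This is possible because $\nu$ is continuous on the compact set $\partial\Omega$: extend by Tietze and mollify. Passing to the limit in the vanishing boundary integrals then gives the result.
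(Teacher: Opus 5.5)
Your proposal is correct and follows the same route as the paper. You identify $D_\varphi^*u$ with $\delta_\varphi u$ by testing against $C_c^\infty$, derive the Green formula with boundary term $\operatorname{Re}\int_{\partial\Omega}\nu v\overline{u}\,e^{-\varphi}\,dS$ (your sign is right), and then test with $v=-(\text{cutoff})\cdot(\text{normal field})\cdot u$ to force $\int_{\partial\Omega}|u|^2e^{-\varphi}\,dS=0$. Your uniform approximation of $\nu$ by smooth vector fields is a small improvement: the paper simply asserts that the normal extends to a $C^1$ vector field, which a merely $C^1$ boundary does not guarantee, since there $\nu$ is only continuous.
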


\begin{proof}
We first identify $D_\varphi^*$ with $\delta_\varphi$.

Let $v\in C_c^\infty(\Omega,\mathbb R_n)$. Since $v\in \operatorname{Dom}(D)$, the definition of $D_\varphi^*$ gives
\[
 \langle Dv, u\rangle_\varphi = \langle v, D_\varphi^* u\rangle_\varphi.
\]
Because $\langle\cdot,\cdot\rangle_\varphi$ is symmetric, we also have
\[
 \langle Dv, u\rangle_\varphi = \langle u, Dv\rangle_\varphi.
\]
Now Proposition~\ref{prop:formal-adjoint} gives
\[
 \delta_\varphi u = e^{\varphi} D\bigl(u e^{-\varphi}\bigr) = Du - (D\varphi)u.
\]
Since $u\in C^1(\overline{\Omega},\mathbb R_n)$ and $v\in C_c^\infty(\Omega,\mathbb R_n)$, the same
componentwise integration-by-parts computation as in the proof of Proposition~\ref{prop:formal-adjoint}
(with no boundary term because $v$ has compact support) yields
\[
 \langle u, Dv\rangle_\varphi = \langle \delta_\varphi u, v\rangle_\varphi.
\]
Using symmetry once more,
\[
 \langle v, D_\varphi^* u\rangle_\varphi
 = \langle Dv, u\rangle_\varphi
 = \langle u, Dv\rangle_\varphi
 = \langle \delta_\varphi u, v\rangle_\varphi
 = \langle v, \delta_\varphi u\rangle_\varphi.
\]
Hence
\[
 \langle v, D_\varphi^* u - \delta_\varphi u\rangle_\varphi = 0
 \qquad \text{for all } v\in C_c^\infty(\Omega,\mathbb R_n).
\]
Since $u\in C^1(\overline{\Omega},\mathbb R_n)$ and $\varphi\in C^1(\overline{\Omega},\mathbb R)$, we have
$\delta_\varphi u\in L^2_\varphi(\Omega,\mathbb R_n)$. Therefore, by density of
$C_c^\infty(\Omega,\mathbb R_n)$ in $L^2_\varphi(\Omega,\mathbb R_n)$, it follows that
$D_\varphi^* u = \delta_\varphi u$ in $L^2_\varphi(\Omega,\mathbb R_n)$.

\medskip

We next show that the boundary trace vanishes. Let $v\in C^1(\overline{\Omega},\mathbb R_n)\cap\operatorname{Dom}(D)$. A standard divergence theorem
computation, applied componentwise in the coefficient representation of Clifford-valued
functions and using Proposition~\ref{prop:formal-adjoint}, gives
\begin{equation}\label{eq:boundary-identity}
 \langle Dv, u\rangle_\varphi
 = \operatorname{Re}\int_{\partial\Omega} \nu\,v\,\overline{u}\,e^{-\varphi}\,dS
  + \langle v, \delta_\varphi u\rangle_\varphi,
\end{equation}
where $\nu = \sum_{j=1}^n \nu_j e_j$ denotes the outward unit normal to $\partial\Omega$ and
$dS$ is the surface measure.

On the other hand, since $u\in\operatorname{Dom}(D_\varphi^*)$ and
$D_\varphi^* u = \delta_\varphi u$, we also have
\[
 \langle Dv, u\rangle_\varphi = \langle v, D_\varphi^* u\rangle_\varphi
 = \langle v, \delta_\varphi u\rangle_\varphi.
\]
Comparing this with~\eqref{eq:boundary-identity} we obtain
\[
 \operatorname{Re}\int_{\partial\Omega} \nu\,v\,\overline{u}\,e^{-\varphi}\,dS = 0
\]
for all $v\in C^1(\overline{\Omega},\mathbb R_n)\cap\operatorname{Dom}(D)$.

Suppose instead that the boundary trace of $u$ does not vanish identically. Since
$u\in C^1(\overline{\Omega},\mathbb R_n)$, there exist a point $y\in\partial\Omega$ with
$u(y)\neq 0$ and an open neighborhood $W$ of $y$ in $\mathbb R^n$ such that
$u\neq 0$ on $W\cap\partial\Omega$. Since $\partial\Omega$ is $C^1$, the unit normal field
extends to a $C^1$ Clifford-valued vector field $\widetilde{\nu}$ on $W$ with
$\widetilde{\nu}=\nu$ on $W\cap\partial\Omega$. Choose a cut-off function
$\rho\in C_c^\infty(W)$, $0\le\rho\le 1$, with $\rho\equiv 1$ near $y$, and set
\[
 v := -\rho\,\widetilde{\nu}\,u \quad\text{in }\Omega.
\]
Then $v\in C^1(\overline{\Omega},\mathbb R_n)\cap\operatorname{Dom}(D)$. On
$W\cap\partial\Omega$ we have
\[
 \operatorname{Re}(\nu\,v\,\overline{u})
 = \operatorname{Re}\bigl(-\rho\,\nu\,\widetilde{\nu}\,u\,\overline{u}\bigr)
 = -\rho\,\operatorname{Re}(\nu^{2}u\overline{u})
 = \rho\,\operatorname{Re}(u\overline{u})
 = \rho\,|u|^{2},
\]
because $\widetilde{\nu}=\nu$ on $\partial\Omega$ and $\nu^{2}=-|\nu|^{2}=-1$.
Therefore
\[
 \operatorname{Re}\int_{\partial\Omega} \nu\,v\,\overline{u}\,e^{-\varphi}\,dS
 = \int_{\partial\Omega\cap W} \rho\,|u|^2 e^{-\varphi}\,dS > 0,
\]
since $\rho\equiv1$ near $y$ and $u\neq 0$ on a neighborhood of $y$ in $\partial\Omega$.
This contradicts the vanishing of the boundary integral established above. Hence $u|_{\partial\Omega}=0$, as claimed.
\end{proof}

\subsection{Functional analysis preliminaries for the $L^2$ existence theorem}

We record the functional-analytic lemma used in the $L^{2}$ existence argument.
\begin{lemma}\label{lem:functional-analysis}
Let $f\in L^2_\varphi(\Omega,\mathbb{R}_n)$. Suppose that for all $u\in C_c^\infty(\Omega,\mathbb{R}_n)$,
\begin{equation}\label{eq:goal estimate}
  |\langle u,f\rangle_\varphi|^2 \le C\,\|D_\varphi^* u\|_\varphi^2
\end{equation}
for some constant $C\ge0$. Then there exists $g\in L^2_\varphi(\Omega,\mathbb{R}_n)$ such that
\[
  Dg=f
\]
in the sense of distributions and
\[
  \|g\|_\varphi^2 \le C.
\]
\end{lemma}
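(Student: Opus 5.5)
The plan is the standard Hörmander duality argument via the Riesz representation theorem. Throughout, I read $\|D_\varphi^* u\|_\varphi$ for $u\in C_c^\infty(\Omega,\mathbb R_n)$ as $\|\delta_\varphi u\|_\varphi$. This is legitimate because, for compactly supported smooth $u$ and any $g\in\operatorname{Dom}(D)$, the definition of the distributional derivative gives
\[
\langle Dg,u\rangle_\varphi=\langle Dg,ue^{-\varphi}\rangle_0=\langle g,D(ue^{-\varphi})\rangle_0=\langle g,\delta_\varphi u\rangle_\varphi .
\]
Here I use Proposition~\ref{prop:formal-adjoint}, and assume $\varphi$ is regular enough, say $C^\infty$ or at least $C^1$, for $ue^{-\varphi}$ to serve as a test function after mollification. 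Hence $u\in\operatorname{Dom}(D_\varphi^*)$ and $D_\varphi^*u=\delta_\varphi u$.

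\emph{Step 1: the functional.} Let $E:=\{\delta_\varphi u: u\in C_c^\infty(\Omega,\mathbb R_n)\}\subset L^2_\varphi(\Omega,\mathbb R_n)$. On $E$ define
\[
T(\delta_\varphi u):=\langle u,f\rangle_\varphi .
\]
This is well defined: if $\delta_\varphi u_1=\delta_\varphi u_2$, then \eqref{eq:goal estimate} applied to $u_1-u_2$ gives $\langle u_1-u_2,f\rangle_\varphi=0$. The same inequality shows $|T(w)|\le C^{1/2}\|w\|_\varphi$ for $w\in E$, so $T$ is a bounded real-linear functional on $E$ with norm at most $C^{1/2}$.

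\emph{Step 2: extension and representation.} Extend $T$ by continuity to $\overline{E}$ and by zero on $E^\perp$; this keeps the norm bound. Riesz representation in the real Hilbert space $L^2_\varphi(\Omega,\mathbb R_n)$, whose inner product is the real-valued $\langle\cdot,\cdot\rangle_\varphi$, then gives a unique $g\in\overline{E}$ with $T(w)=\langle w,g\rangle_\varphi$ for all $w$ and $\|g\|_\varphi\le C^{1/2}$, that is, $\|g\|_\varphi^2\le C$.

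\emph{Step 3: $Dg=f$ in the sense of distributions.} For every $u\in C_c^\infty$ we have
\[
\langle \delta_\varphi u,g\rangle_\varphi=\langle u,f\rangle_\varphi .
\]
Given a test function $v\in C_c^\infty$, set $u:=ve^{\varphi}$, which is compactly supported (smooth if $\varphi$ is smooth; otherwise $C^1_c$, and one approximates). Then $\delta_\varphi u=e^{\varphi}D(ue^{-\varphi})=e^{\varphi}Dv$, so
\[
\langle g,Dv\rangle_0=\langle e^{\varphi}Dv,g\rangle_\varphi=\langle ve^{\varphi},f\rangle_\varphi=\langle f,v\rangle_0 .
\]
This is exactly the defining relation $\langle Dg,v\rangle_0=\langle g,Dv\rangle_0$ with $Dg=f$. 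Since $f\in L^2_\varphi$, it also follows that $g\in\operatorname{Dom}(D)$.

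The only delicate point is the regularity of $\varphi$. If $\varphi$ is merely $C^1$, then $ve^{\varphi}$ is only $C^1_c$, and the estimate \eqref{eq:goal estimate} must be extended from $C_c^\infty$ to $C^1_c$. I would do this by mollification: if $u_k\to u$ in $C^1$ with supports in a fixed compact set, then $\delta_\varphi u_k\to\delta_\varphi u$ and $\langle u_k,f\rangle_\varphi\to\langle u,f\rangle_\varphi$, since $f\in L^1_{\mathrm{loc}}$. Everything else is routine.
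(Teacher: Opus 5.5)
Your proof is correct and follows the paper's argument: define $D_\varphi^*u\mapsto\langle u,f\rangle_\varphi$ on $E$, extend it with norm at most $\sqrt{C}$, represent it via Riesz, and test with $u=ve^{\varphi}$ to obtain $Dg=f$. The differences are minor. You extend by continuity to $\overline{E}$ and by zero on $E^\perp$ instead of using Hahn--Banach, which also places $g$ in $\overline{E}$. You also spell out two points the paper uses tacitly: the identification $D_\varphi^*u=\delta_\varphi u$ on $C_c^\infty$, and the regularity of $\varphi$ needed for $ve^{\varphi}$ to be an admissible test function.
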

\begin{proof}
We construct a linear functional $L_f$ by
\[
  L_f(D_\varphi^*u):=\langle u,f\rangle_\varphi,
  \qquad u\in C_c^\infty(\Omega,\mathbb{R}_n).
\]
The solution $g$ will then be obtained from the Riesz representation theorem.

We first define $L_f$ on the image of $D_\varphi^*$, namely on the subspace
\[
  E=\{D_\varphi^*u: u\in C_c^\infty(\Omega,\mathbb{R}_n)\}.
\]
For $v=D_\varphi^*u\in E$, we set
\[
  L_f(v):=\langle u,f\rangle_\varphi.
\]

This is well defined. Indeed, if $D_\varphi^*(u_1-u_2)=0$, then \eqref{eq:goal estimate} gives
\[
  |\langle u_1-u_2,f\rangle_\varphi|^2
  \le C\,\|D_\varphi^*(u_1-u_2)\|_\varphi^2 = 0,
\]
so $\langle u_1,f\rangle_\varphi=\langle u_2,f\rangle_\varphi$.

By the Hahn--Banach theorem, $L_f$ extends to a continuous linear functional on $L^2_\varphi(\Omega,\mathbb{R}_n)$, still denoted by $L_f$, with
\[
  |L_f(v)| \le \sqrt{C}\,\|v\|_\varphi,
  \qquad v\in L^2_\varphi(\Omega,\mathbb{R}_n).
\]
By the Riesz representation theorem, there exists $g\in L^2_\varphi(\Omega,\mathbb{R}_n)$ such that
\[
  L_f(v)=\langle v,g\rangle_\varphi,
  \qquad v\in L^2_\varphi(\Omega,\mathbb{R}_n),
\]
and $\|g\|_\varphi\le\sqrt{C}$.

To verify that $Dg=f$ distributionally, let $U\in C_c^\infty(\Omega,\mathbb R_n)$ be arbitrary and set $u:=Ue^{\varphi}$. Then $u\in C_c^\infty(\Omega,\mathbb R_n)$ and, taking $v=D_\varphi^*u$, we obtain
\[
  \langle U,f\rangle_0
  =\langle u,f\rangle_\varphi
  =L_f(D_\varphi^*u)
  =\langle D_\varphi^*u,g\rangle_\varphi
  =\langle DU,g\rangle_0.
\]
Hence $Dg=f$ in the sense of distributions. Since $\|g\|_\varphi\le\sqrt{C}$, we also have $\|g\|_\varphi^2\le C$.
\end{proof}

The following lemma is the corresponding $L^2$ existence criterion.

\begin{lemma}\label{lem:L2 existence}
Let $\Omega\subset \mathbb{R}^n$ be a domain and let $A$ be a non-negative function over $\Omega$. Suppose that
\begin{equation}\label{eq:lem:L2 existence}
\|D^\ast_\varphi u\|^2_\varphi\geq \int_\Omega A |u|^2e^{-\varphi}dV
\end{equation}
holds for every $u\in C_c^\infty(\Omega,\mathbb{R}_n)$. Then for all $f\in L^2_\varphi(\Omega,\mathbb{R}_n)$ satisfying
\[\int_\Omega \frac{|f|^2}{A}e^{-\varphi}dV<\infty,\]
there exists $g\in L^2_\varphi(\Omega,\mathbb{R}_n)$ such that $Dg=f$ with the estimate
\[\|g\|^2_\varphi\leq \int_\Omega \frac{|f|^2}{A}e^{-\varphi}dV. \]
\end{lemma}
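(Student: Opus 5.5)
The plan is to reduce Lemma~\ref{lem:L2 existence} to Lemma~\ref{lem:functional-analysis}. Concretely, for each $f$ with $M:=\int_\Omega |f|^2A^{-1}e^{-\varphi}\,dV<\infty$ we verify the hypothesis \eqref{eq:goal estimate} with the constant $C=M$.

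Fix $u\in C_c^\infty(\Omega,\mathbb R_n)$. The main step is a weighted Cauchy--Schwarz inequality. Pointwise we have $|\operatorname{Re}(u\overline f)|\le |u|\,|f|$. Wherever $A>0$ we split $|u||f| = (A^{1/2}|u|)(A^{-1/2}|f|)$, and then integrate:
\[
|\langle u,f\rangle_\varphi|^2 \le \Bigl(\int_\Omega A^{1/2}|u|\,A^{-1/2}|f|\,e^{-\varphi}dV\Bigr)^2 \le \int_\Omega A|u|^2e^{-\varphi}dV\cdot \int_\Omega \frac{|f|^2}{A}e^{-\varphi}dV .
\]
Applying \eqref{eq:lem:L2 existence} to the first factor gives $|\langle u,f\rangle_\varphi|^2\le M\,\|D_\varphi^*u\|_\varphi^2$. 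Lemma~\ref{lem:functional-analysis} then produces $g\in L^2_\varphi$ with $Dg=f$ distributionally and $\|g\|_\varphi^2\le M$, which is exactly the claim.

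The only delicate point is the set $Z=\{A=0\}$. Here we adopt the convention $|f|^2/A=+\infty$ where $f\ne0$ and $A=0$, and $0$ where $f=0$. Finiteness of $M$ then forces $f=0$ almost everywhere on $Z$. Consequently $\operatorname{Re}(u\overline f)$ vanishes a.e.\ on $Z$, and the integral defining $\langle u,f\rangle_\varphi$ may be taken over $\Omega\setminus Z$ alone, where the splitting above is legitimate.

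We also note that $D_\varphi^*u$ for $u\in C_c^\infty$ agrees with $\delta_\varphi u$, by the computation in Lemma~\ref{lem:adjoint-and-boundary}, whose first part needs no boundary regularity for compactly supported $u$. So the hypothesis is meaningful as stated, and no further obstacle arises.
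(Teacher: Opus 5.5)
Your proposal is correct and follows the paper's proof essentially verbatim: a Cauchy--Schwarz inequality with the splitting $|u|\,|f|=(A^{1/2}|u|)(A^{-1/2}|f|)$, the hypothesis \eqref{eq:lem:L2 existence}, and then Lemma~\ref{lem:functional-analysis} with $C=\int_\Omega |f|^2A^{-1}e^{-\varphi}\,dV$. Your treatment of the zero set $\{A=0\}$, where finiteness of the integral forces $f=0$ almost everywhere, handles a point the paper passes over silently when it writes $\langle f/\sqrt{A},\sqrt{A}\,u\rangle_\varphi$. It is a careful addition, not a change of method.
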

\begin{proof}
Let $f\in L^2_\varphi(\Omega,\mathbb{R}_n)$ satisfying that
\[\int_\Omega \frac{|f|^2}{A}e^{-\varphi}dV<\infty.\]
For any $u\in C_c^\infty(\Omega,\mathbb{R}_n)$, by the Cauchy–Schwarz inequality and the hypothesis \eqref{eq:lem:L2 existence}, we have
\begin{align*}
|\langle f,u\rangle_\varphi|^2
&= \left|\left\langle \frac{f}{\sqrt{A}},\sqrt{A}u\right\rangle_\varphi\right|^2 \\
&\leq \left( \int_\Omega\frac{|f|^2}{A}e^{-\varphi}dV \right)
    \left( \int_\Omega A|u|^2e^{-\varphi}dV \right) \\
&\leq \|D_\varphi^* u\|^2_\varphi
   \int_\Omega \frac{|f|^2}{A}e^{-\varphi}dV.
\end{align*}
Taking
\[C=\int_\Omega \frac{|f|^2}{A}e^{-\varphi}dV,\]
we may apply Lemma \ref{lem:functional-analysis} to conclude that there exists $g\in L^2_\varphi(\Omega,\mathbb{R}_n)$ such that $Dg=f$ with the estimate
\[\|g\|^2_\varphi\leq C= \int_\Omega \frac{|f|^2}{A}e^{-\varphi}dV. \]
This completes the proof.
\end{proof}

\begin{remark}
The density lemma plays a crucial role in the classical $L^{2}$–method for the $\bar\partial$–complex.
Unlike H\"{o}rmander's original approach~\cite{Hormander65}, the $L^2$ theory for the Dirac equation developed here does not require such a density result. This simplification arises because we only consider the single equation $Dg=f$, whereas H\"{o}rmander's theory concerns the system
\[\bar{\partial}g=f,\quad \bar{\partial}f=0,\]
which is inherently more complicated.
\end{remark}

\section{Classical weighted $L^{2}$ theory for the Dirac operator}\label{section:4}

We recall the classical weighted $L^{2}$ identity for the Dirac operator in the Euclidean setting. The structural condition
\begin{equation*}
 \Delta \varphi - \Bigl(1 - \frac{2}{n}\Bigr)\Bigl(1 + \frac{1}{\varepsilon}\Bigr) |\nabla\varphi|^{2} \ge 0,
\end{equation*}
will serve as a reference point in the later sections. For the Gaussian weight $\varphi = |x|^{2}$ on $\mathbb{R}^{n}$ with $n \ge 3$, this condition fails.

\subsection{The fundamental identity}

The following proposition records the weighted identity relating $D^{*}_{\varphi}$ to the derivatives of the weight $\varphi$. It is the Dirac analogue of the classical H\"{o}rmander identity for $\bar\partial$ and reformulates \cite[Proposition~2.5]{JiZhu17} in the present setting.

\begin{proposition}[Weighted $L^{2}$ identity]\label{prop:fundamental-identity}
Let $n \ge 2$ and let $\Omega \subset \mathbb{R}^{n}$ be a domain.
Let $\varphi \in C^{2}(\Omega,\mathbb{R})$ and let constants $\kappa,k \in \mathbb{R}$
satisfy
\begin{equation}\label{eq:kappa-k-relation}
 \kappa > \frac{n-2}{n},
 \qquad
 k = \frac{(n-2)\kappa}{n\kappa - (n-2)} > \frac{n-2}{n}.
\end{equation}
Then, for every $u \in C_{c}^{\infty}(\Omega,\mathbb{R}_{n})$, the identity
\begin{align}
 (1+k)\,\|D^{*}_{\varphi} u\|^{2}_{\varphi}
 &= \int_{\Omega} \bigl(\Delta \varphi - \kappa |\nabla \varphi|^{2}\bigr) |u|^{2} e^{-\varphi}\,dV
   \label{eq:weighted-L2-identity}\\
 &\quad + \int_{\Omega}
    \left|
     \sqrt{k - \frac{n-2}{n}}\, Du
     - \sqrt{\kappa + k}\, D\varphi \cdot u
    \right|^{2} e^{-\varphi}\,dV \nonumber\\
 &\quad + 2 \int_{\Omega}
   \left(
    \sum_{j,A} |\partial_{j} u_{A}|^{2}
    - \frac{1}{n} |Du|^{2}
   \right) e^{-\varphi}\,dV
   \nonumber
\end{align}
holds.
\end{proposition}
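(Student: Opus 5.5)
The plan is to prove \eqref{eq:weighted-L2-identity} by expanding both sides in terms of a few basic weighted integrals, reducing all of them to a common set by integration by parts, and then checking that the coefficients agree because of \eqref{eq:kappa-k-relation}. Fix $u\in C_c^\infty(\Omega,\mathbb R_n)$. By Lemma~\ref{lem:adjoint-and-boundary} (applied on a smooth bounded subdomain containing $\operatorname{supp}u$), or directly by Proposition~\ref{prop:formal-adjoint}, we have $D^*_\varphi u=\delta_\varphi u=Du-(D\varphi)u$. We introduce the quantities
\[
X=\|Du\|_\varphi^2,\quad Y=\int_\Omega|\nabla\varphi|^2|u|^2e^{-\varphi}dV,\quad Z=\langle Du,(D\varphi)u\rangle_\varphi,\quad G=\int_\Omega\sum_{j,A}|\partial_ju_A|^2e^{-\varphi}dV,
\]
together with $L=\int_\Omega\Delta\varphi\,|u|^2e^{-\varphi}dV$. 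Since $D\varphi$ is a paravector, Lemma~\ref{lem:Clifford-norm} gives $|(D\varphi)u|=|\nabla\varphi||u|$. Hence
\[
\|D^*_\varphi u\|_\varphi^2=X-2Z+Y .
\]
Expanding the square on the right-hand side of \eqref{eq:weighted-L2-identity} in the same way gives
\[
aX+bY-2\sqrt{ab}\,Z,\qquad a=k-\tfrac{n-2}{n},\quad b=\kappa+k .
\]
Both sides are therefore linear combinations of $X,Y,Z,G,L$.

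Next we derive the integration-by-parts relations among these quantities.

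\emph{First relation (for $Z$).} Use the adjoint relation $\langle Du,v\rangle_\varphi=\langle u,\delta_\varphi v\rangle_\varphi$ with $v=(D\varphi)u$. Then compute $D((D\varphi)u)$ with the Leibniz rule, using two facts:
\begin{itemize}
\item $D(D\varphi)=-\Delta\varphi$, because the bivector part vanishes by symmetry of the Hessian;
\item $e_j(D\varphi)=-(D\varphi)e_j-2\partial_j\varphi$.
\end{itemize}
Also use $(D\varphi)^2=-|\nabla\varphi|^2$. The term $\sum_j\partial_j\varphi\,\partial_j|u|^2$ is integrated by parts once more against $e^{-\varphi}$. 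The outcome should be an identity expressing a fixed multiple of $Z$ through $L$, $Y$ and possibly $X$. Note that $\operatorname{Re}$ of $\langle(D\varphi)u,Du\rangle$-type terms is symmetric, which lets us collect $Z$ on one side.

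\emph{Second relation (for $X$).} Compare $X$ with $G$ by the weighted Bochner computation. Unweighted, $\|Du\|_0^2=\sum_j\|\partial_ju\|_0^2$, since $D^2=-\Delta$ and the mixed terms $e_je_k$ cancel. With the weight, the same integration by parts produces extra first-order terms $\partial_j\varphi$. These terms should again be expressible through $Z$ and the quantity $\sum_j\langle\partial_j u,\partial_j\varphi\,u\rangle_\varphi=\tfrac12(L-Y)$-type expressions.

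Finally, substitute both relations into the difference of the two sides of \eqref{eq:weighted-L2-identity}. Each coefficient of $X,Y,Z,G,L$ must vanish:
\begin{itemize}
\item the $L$ and $Y$ coefficients fix how $\kappa$ enters;
\item the $G$ and $-\tfrac{2}{n}X$ coefficients fix the constant $\tfrac{n-2}{n}$ in $a$;
\item the $Z$ coefficient requires $\sqrt{ab}$ to equal a specific affine expression in $k$ and $\kappa$.
\end{itemize}
I expect this last matching to be the main obstacle. The cross term $Z$ is not absorbed by the integration-by-parts relations alone; it is only cancelled because completing the square with $\sqrt{a}\,Du-\sqrt{b}\,(D\varphi)u$ produces exactly the needed multiple. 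The plan is to square the required identity for the $Z$ coefficient and show it reduces to the relation $nk\kappa-(n-2)k-(n-2)\kappa=0$, which is equivalent to $k=\frac{(n-2)\kappa}{n\kappa-(n-2)}$. The positivity conditions $\kappa,k>\frac{n-2}{n}$ ensure $a,b>0$, so the square roots are real and the sign of the root is the correct one. If the bookkeeping of the first-order terms in the Bochner step does not close, I would redo the computation for the conjugated function $ue^{-\varphi/2}$, where the weight disappears from the measure and the first-order terms become symmetric.
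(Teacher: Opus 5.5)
Your plan is essentially the paper's proof. The only integration-by-parts input needed is the weighted Bochner identity $\|D^*_\varphi u\|_\varphi^2+\|Du\|_\varphi^2=2G+L$, which in your notation reads $X=G+Z+\tfrac12(L-Y)$. Note that $\sum_j\langle\partial_j u,\partial_j\varphi\,u\rangle_\varphi=\tfrac12(Y-L)$, not $\tfrac12(L-Y)$. Eliminating $G$ with this identity, the $X$, $Y$, $L$ coefficients match for $a=k-\tfrac{n-2}{n}$ and $b=\kappa+k$. The $Z$ coefficient matches exactly when $\sqrt{ab}=k$, i.e.\ $ab=k^2$ with $k\ge0$, which is the stated relation. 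The paper runs the same computation forward: it adds $k$ times $\|D^*_\varphi u\|_\varphi^2=X+Y-2Z$ to the Bochner identity and completes the square.

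Your ``first relation'' should be dropped. Expanding $\langle Du,(D\varphi)u\rangle_\varphi=\langle u,\delta_\varphi((D\varphi)u)\rangle_\varphi$ by the Leibniz rule only returns the tautology $Z=Z$. In fact no identity expressing $Z$ through $L$, $Y$, $X$ holds for Clifford-valued $u$. One has $Z-\tfrac12(Y-L)=-\sum_{i\ne j}\int_\Omega\partial_i\varphi\,\operatorname{Re}(e_ie_j\,\partial_ju\,\overline{u})\,e^{-\varphi}\,dV$. This vanishes for scalar $u$, but for $\varphi=x_1$ and $u=\chi(\cos x_2+\sin x_2\,e_1e_2)$ with real $\chi\in C_c^\infty(\Omega)$ it equals $\int_\Omega\chi^2e^{-x_1}\,dV\neq 0$. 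So $Z$ must be kept as an independent quantity and cancelled by the completed square, exactly as your last paragraph anticipates.
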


\begin{proof}
Set
\[
 A:=\|D_\varphi^*u\|_\varphi^2,\qquad
 B:=\|Du\|_\varphi^2,\qquad
 N:=\int_\Omega |\nabla\varphi|^2|u|^2e^{-\varphi}\,dV.
\]
A reformulation of the computation leading to the basic Bochner identity is
\begin{equation}\label{eq:basic-Bochner}
 A+B
 =2\int_\Omega \sum_{j,A}|\partial_j u_A|^2e^{-\varphi}\,dV
  +\int_\Omega \Delta\varphi\,|u|^2e^{-\varphi}\,dV.
\end{equation}
Hence
\begin{equation}\label{eq:weighted-identity-step1}
 A
 = \int_\Omega \Delta\varphi\,|u|^2e^{-\varphi}\,dV
  +2\int_\Omega\left(\sum_{j,A}|\partial_j u_A|^2-\frac1n|Du|^2\right)e^{-\varphi}\,dV
  -\frac{n-2}{n}\,B.
\end{equation}

On the other hand, since $D_\varphi^*u=Du-D\varphi\cdot u$, we have
\begin{equation}\label{eq:weighted-identity-step2}
 A
 = B + N - 2\langle Du, D\varphi\cdot u\rangle_\varphi.
\end{equation}
Multiplying \eqref{eq:weighted-identity-step2} by $k$ and adding the result to
\eqref{eq:weighted-identity-step1}, we obtain
\begin{align*}
 (1+k)A
  &= \int_\Omega \Delta\varphi\,|u|^2e^{-\varphi}\,dV
   +2\int_\Omega\left(\sum_{j,A}|\partial_j u_A|^2-\frac1n|Du|^2\right)e^{-\varphi}\,dV \\
  &\quad +\left(k-\frac{n-2}{n}\right)B + kN
   -2k\langle Du,D\varphi\cdot u\rangle_\varphi.
\end{align*}
Now add and subtract $\kappa N$:
\begin{align*}
 (1+k)A
  &= \int_\Omega (\Delta\varphi-\kappa|\nabla\varphi|^2)|u|^2e^{-\varphi}\,dV \\
  &\quad +2\int_\Omega\left(\sum_{j,A}|\partial_j u_A|^2-\frac1n|Du|^2\right)e^{-\varphi}\,dV \\
  &\quad +\left(k-\frac{n-2}{n}\right)B + (\kappa+k)N
   -2k\langle Du,D\varphi\cdot u\rangle_\varphi.
\end{align*}
The relation \eqref{eq:kappa-k-relation} is equivalent to
\[
 \left(k-\frac{n-2}{n}\right)(\kappa+k)=k^2.
\]
Therefore the last line is exactly
\[
 \int_\Omega
    \left|
     \sqrt{k - \frac{n-2}{n}}\, Du
     - \sqrt{\kappa + k}\, D\varphi \cdot u
    \right|^{2} e^{-\varphi}\,dV,
\]
and \eqref{eq:weighted-L2-identity} follows.
\end{proof}

\begin{remark}
The last integral in \eqref{eq:weighted-L2-identity} is nonnegative:
\[
 \sum_{j,A} |\partial_{j} u_{A}|^{2} - \frac{1}{n} |Du|^{2} \ge 0,
\]
which is the standard pointwise inequality for the Dirac operator and
expresses the fact that the trace-free part of the gradient dominates the
trace part. This will be used to derive a priori estimates by discarding
positive terms.
\end{remark}

\subsection{A priori estimate and the existence theorem}

Proposition~\ref{prop:fundamental-identity} yields the following a priori estimate for $D^{*}_{\varphi}$. Combined with Lemma~\ref{lem:functional-analysis}, it recovers the classical higher-dimensional existence theorem under the structural condition $\Delta \varphi \ge \kappa |\nabla\varphi|^{2}$, corresponding to \cite[Proposition~2.8]{JiZhu17} in the Euclidean setting.

\begin{proposition}[A priori estimate]\label{prop:apriori}
\begin{enumerate}
\item Let $\Omega \subset \mathbb{R}^{2}$ be a domain and let $\varphi \in C^{2}(\Omega,\mathbb{R})$ be a subharmonic function. Then the following inequality
  \begin{equation*}
    \|D^{*}_{\varphi} u\|^{2}_{\varphi}
 \;\ge\; \int_{\Omega} \Delta \varphi\,|u|^{2} e^{-\varphi}\,dV
  \end{equation*}
 holds for all $u\in C^\infty_c(\Omega,\mathbb{R}_2)$.
\item Let $n>2$, $\Omega \subset \mathbb{R}^{n}$ be a domain,
and let $\varphi \in C^{2}(\Omega,\mathbb{R})$. Assume that
\[
 \kappa > \frac{n-2}{n}.
\]
Then there exists a constant $C_{\kappa} > 0$, depending only on $n$ and $\kappa$,
such that for all $u \in C_c^\infty(\Omega,\mathbb{R}_n)$,
\begin{equation}\label{eq:apriori-estimate}
 \|D^{*}_{\varphi} u\|^{2}_{\varphi}
 \;\ge\; C_{\kappa}
      \int_{\Omega}
       \bigl(\Delta \varphi - \kappa |\nabla\varphi|^{2}\bigr)
       |u|^{2} e^{-\varphi}\,dV.
\end{equation}
\end{enumerate}
\end{proposition}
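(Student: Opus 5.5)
Both parts will be read off the weighted identity of Proposition~\ref{prop:fundamental-identity}: we discard the terms that are manifestly nonnegative and keep the curvature term. For $u\in C_c^\infty(\Omega,\mathbb R_n)$ with compact support we have $D^*_\varphi u=\delta_\varphi u$; this follows from Lemma~\ref{lem:adjoint-and-boundary}, or directly from the definition, since the integration by parts against any $v\in\operatorname{Dom}(D)$ produces no boundary term. We can therefore use the identity, which was proved with $D^*_\varphi u=Du-D\varphi\cdot u$.

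\emph{Part (2).} Fix $\kappa>\frac{n-2}{n}$ and let $k$ be determined by \eqref{eq:kappa-k-relation}. Then $k>\frac{n-2}{n}$, so $\sqrt{k-\frac{n-2}{n}}$ is real. The squared norm in \eqref{eq:weighted-L2-identity} is nonnegative, and by the pointwise remark following the proposition so is the integrand $\sum_{j,A}|\partial_j u_A|^2-\frac1n|Du|^2$. Dropping both terms gives
\[
(1+k)\|D^*_\varphi u\|_\varphi^2\ \ge\ \int_\Omega(\Delta\varphi-\kappa|\nabla\varphi|^2)|u|^2e^{-\varphi}\,dV .
\]
Hence \eqref{eq:apriori-estimate} holds with $C_\kappa=1/(1+k)=\frac{n\kappa-(n-2)}{n\kappa-(n-2)+(n-2)\kappa}$, which depends only on $n$ and $\kappa$ and is positive because $1+k>0$.

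\emph{Part (1).} When $n=2$ the relation \eqref{eq:kappa-k-relation} allows any $\kappa>0$ and forces $k=0$. The identity then reads
\[
\|D^*_\varphi u\|^2_\varphi=\int(\Delta\varphi-\kappa|\nabla\varphi|^2)|u|^2e^{-\varphi}+\kappa\|D\varphi\cdot u\|_\varphi^2+2\int\Big(\sum|\partial_ju_A|^2-\tfrac12|Du|^2\Big)e^{-\varphi}.
\]
The paravector case of Lemma~\ref{lem:Clifford-norm} gives $|D\varphi\cdot u|=|\nabla\varphi||u|$, so the two $\kappa$-terms cancel exactly and $\kappa$ drops out. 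Alternatively one may let $\kappa\to0^+$ in the estimate from the general case. Discarding the last, nonnegative term leaves $\|D^*_\varphi u\|_\varphi^2\ge\int\Delta\varphi|u|^2e^{-\varphi}$. A cleaner route avoids the $\kappa$ bookkeeping: combine \eqref{eq:basic-Bochner} with $n=2$, i.e.\ $A+B=2\int\sum|\partial_ju_A|^2e^{-\varphi}+\int\Delta\varphi|u|^2e^{-\varphi}$, with the pointwise bound $2\sum|\partial_ju_A|^2\ge|Du|^2$, so that the $B$ on the left is absorbed.

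The only nontrivial ingredient is that pointwise inequality $\sum_{j,A}|\partial_ju_A|^2\ge\frac1n|Du|^2$, which the remark asserts. If it needs justification, write $Du=\sum_j e_j\partial_ju$ and apply the triangle inequality together with Cauchy--Schwarz: $|Du|\le\sum_j|e_j\partial_ju|=\sum_j|\partial_ju|$, using that $e_j$ is a unit paravector so $|e_j\partial_j u|=|\partial_j u|$ by Lemma~\ref{lem:Clifford-norm}. Then $|Du|^2\le n\sum_j|\partial_ju|^2$. I expect this step, together with the sign checks on $k-\frac{n-2}{n}$ and $1+k$, to be the only place needing care; the rest is immediate from the identity.
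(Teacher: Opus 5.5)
Your proposal is correct and follows the paper's proof. Part (2) is read off Proposition~\ref{prop:fundamental-identity} by dropping the square term and the nonnegative gradient term, with your explicit $C_\kappa=1/(1+k)$ as a harmless bonus; your ``cleaner route'' for part (1) is exactly the paper's argument via \eqref{eq:basic-Bochner}, except that the paper writes $2\sum_{j,A}|\partial_j u_A|^2-|Du|^2$ as $|(e_1\partial_1-e_2\partial_2)u|^2$ instead of using Cauchy--Schwarz, and your first route uses the identity with $k=0$, which violates its stated strict hypothesis $k>\frac{n-2}{n}$ but remains valid since the derivation goes through unchanged.
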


\begin{proof}
We first prove the assertion $(1)$. Let $\Omega\subset\mathbb{R}^2$ be a domain and let $\varphi$ be a subharmonic function over $\Omega$. Recall from equation \eqref{eq:basic-Bochner} that for any $u\in C_c^\infty(\Omega,\mathbb{R}_n)$,
\begin{align*}
  \|D^{*}_{\varphi} u\|^{2}_{\varphi}&=
   -\|Du\|^{2}_{\varphi}+
  2 \int_{\Omega} \sum_{j,A} |\partial_{j} u_{A}|^{2} e^{-\varphi}\,dV
   + \int_{\Omega}\Delta\varphi \,|u|^{2} e^{-\varphi}\,dV\\
   &=\int_{\Omega} |(e_1\partial_{1}-e_2\partial_2) u|^{2} e^{-\varphi}\,dV
   + \int_{\Omega}\Delta\varphi \,|u|^{2} e^{-\varphi}\,dV\geq \int_{\Omega}\Delta\varphi \,|u|^{2} e^{-\varphi}\,dV.
\end{align*}
This proves assertion $(1)$. The assertion $(2)$ follows from Proposition \ref{prop:fundamental-identity} and the fact that
\[
 \sum_{j,A} |\partial_{j} u_{A}|^{2} - \frac{1}{n} |Du|^{2} \ge 0.
\]
\end{proof}

Combining Proposition~\ref{prop:apriori} and Lemma~\ref{lem:L2 existence}, we obtain
the following classical existence theorem.

\begin{theorem}[Classical $L^{2}$-existence theorem]\label{thm:classical-L2}
\begin{enumerate}
\item Let $\Omega \subset \mathbb{R}^{2}$ be a domain and let $\varphi \in C^{2}(\Omega,\mathbb{R})$ be a subharmonic function. Then, for every $f \in L^{2}_{\varphi}(\Omega,\mathbb{R}_2)$ such that
\begin{equation*}
 \int_{\Omega}
  \frac{|f|^{2}}{\Delta\varphi}
  e^{-\varphi}\,dV < \infty,
\end{equation*}
there exists a solution $u \in L^{2}_{\varphi}(\Omega,\mathbb{R}_2)$ to the
Dirac equation
\[
 Du = f \quad\text{in }\Omega
\]
satisfying the estimate
\begin{equation*}
 \|u\|^{2}_{\varphi}
  \;\le\;
      \int_{\Omega}
       \frac{|f|^{2}}{\Delta\varphi}
       e^{-\varphi}\,dV.
\end{equation*}
\item Let $n>2$, $\Omega \subset \mathbb{R}^{n}$ be a domain and
$\varphi \in C^{2}(\Omega,\mathbb{R})$ a weight function satisfying
\begin{equation}\label{eq:structural-kappa}
 \Delta \varphi \;\ge\; \kappa |\nabla\varphi|^{2}
 \quad\text{for some constant }\kappa > \frac{n-2}{n}.
\end{equation}
Then, for every $f \in L^{2}_{\varphi}(\Omega,\mathbb{R}_{n})$ such that
\begin{equation}\label{eq:compatibility}
 \int_{\Omega}
  \frac{|f|^{2}}{\Delta\varphi - \kappa |\nabla\varphi|^{2}}
  e^{-\varphi}\,dV < \infty,
\end{equation}
there exists a solution $u \in L^{2}_{\varphi}(\Omega,\mathbb{R}_{n})$ to the
Dirac equation
\[
 Du = f \quad\text{in }\Omega
\]
satisfying the estimate
\begin{equation}\label{eq:classical-estimate}
 \|u\|^{2}_{\varphi}
  \;\le\; \frac{1}{C_{\kappa}}
      \int_{\Omega}
       \frac{|f|^{2}}{\Delta\varphi - \kappa |\nabla\varphi|^{2}}
       e^{-\varphi}\,dV,
\end{equation}
where $C_{\kappa}>0$ is the constant from Proposition~\ref{prop:apriori},
depending only on $n$ and $\kappa$.
\end{enumerate}
\end{theorem}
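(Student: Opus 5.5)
The statement is an immediate combination of the a priori estimates in Proposition~\ref{prop:apriori} with the abstract existence criterion Lemma~\ref{lem:L2 existence}. The plan is to check, in each part, that the hypothesis \eqref{eq:lem:L2 existence} holds with a suitable nonnegative function $A$, and then read off the conclusion.

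For part (1), take $A:=\Delta\varphi$, which is nonnegative because $\varphi$ is subharmonic. Proposition~\ref{prop:apriori}(1) gives
\[
\|D^*_\varphi u\|_\varphi^2\ge\int_\Omega \Delta\varphi\,|u|^2e^{-\varphi}\,dV
\]
for all $u\in C_c^\infty(\Omega,\mathbb R_2)$, which is exactly \eqref{eq:lem:L2 existence}. Lemma~\ref{lem:L2 existence} then produces $u\in L^2_\varphi$ with $Du=f$ and the stated bound. One small point needs care: where $\Delta\varphi=0$, the finiteness of $\int|f|^2/\Delta\varphi\,e^{-\varphi}$ forces $f=0$ almost everywhere on that set. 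I would read the quotient with the convention $0/0=0$. The Cauchy--Schwarz step in the proof of Lemma~\ref{lem:L2 existence} still goes through, because on $\{A=0\}$ the integrand $\operatorname{Re}(f\bar u)$ vanishes almost everywhere.

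For part (2), take $A:=C_\kappa(\Delta\varphi-\kappa|\nabla\varphi|^2)$. This is nonnegative by the structural condition \eqref{eq:structural-kappa}. Proposition~\ref{prop:apriori}(2) gives \eqref{eq:apriori-estimate}, which is again precisely \eqref{eq:lem:L2 existence}. The compatibility condition \eqref{eq:compatibility} says
\[
\int|f|^2A^{-1}e^{-\varphi}<\infty.
\]
Lemma~\ref{lem:L2 existence} then yields a solution with
\[
\|u\|_\varphi^2\le \int |f|^2/A\,e^{-\varphi}=C_\kappa^{-1}\int|f|^2/(\Delta\varphi-\kappa|\nabla\varphi|^2)\,e^{-\varphi},
\]
which is \eqref{eq:classical-estimate}. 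The same convention handles the zero set of $A$.

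There is no real obstacle here. The only step deserving attention is the degenerate set where $A$ vanishes, which the finiteness hypothesis on $f$ controls as described above. One should also record that $C_\kappa>0$ is the explicit constant $1/(1+k)$ coming from the identity \eqref{eq:weighted-L2-identity}, with $k$ determined by $\kappa$ through \eqref{eq:kappa-k-relation}.
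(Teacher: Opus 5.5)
Your proposal is correct and matches the paper's proof, which simply combines Proposition~\ref{prop:apriori} with Lemma~\ref{lem:L2 existence}, taking $A=\Delta\varphi$ in part (1) and $A=C_\kappa(\Delta\varphi-\kappa|\nabla\varphi|^2)$ in part (2). Your treatment of the zero set of $A$ and your identification $C_\kappa=1/(1+k)$ are correct, and the paper leaves both implicit.
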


\begin{proof}
This theorem is a direct corollary of Lemma \ref{lem:L2 existence} and Proposition \ref{prop:apriori}.
\end{proof}

\begin{remark}
Theorem~\ref{thm:classical-L2} is the Euclidean counterpart of
\cite[Proposition~2.8]{JiZhu17}, where the same structural condition
\eqref{eq:structural-kappa} is imposed on the weight on a spin manifold.
Sections~\ref{section:6} and \ref{section:7} show that this condition is too restrictive in higher
dimension: the Gaussian weight $\varphi=|x|^{2}$, standard in analysis and probability, does not satisfy \eqref{eq:structural-kappa} on $\mathbb{R}^{n}\setminus\{0\}$ when $n \ge 3$.
\end{remark}

\section{An obstruction in higher dimensions}\label{section:5}

Section~\ref{section:4} yields in dimension $n=2$ a H\"{o}rmander-type existence theorem under the sole assumption $\Delta\varphi\ge0$. In this section we show that such a statement does not extend to dimensions $n\ge3$. More precisely, we exhibit a subharmonic weight on an exterior domain for which there is no finite constant $C=C(\Omega,\varphi)$ such that every $f\in L^{2}_{\varphi}(\Omega,\mathbb{R}_{n})$ with
\[
 \int_{\Omega}\frac{|f|^{2}}{\Delta\varphi}\,e^{-\varphi}\,dV<\infty
\]
admits a solution $u$ of $Du=f$ satisfying
\begin{equation}\label{eq:naive-estimate}
 \|u\|^{2}_{\varphi}
  \le C \int_{\Omega}\frac{|f|^{2}}{\Delta\varphi}\,e^{-\varphi}\,dV.
\end{equation}
The counterexample identifies an obstruction to any higher-dimensional theory based only on $\Delta\varphi$.

Within the classical Bochner framework, the stronger condition
\begin{equation}\label{eq:structure-kappa}
 \Delta\varphi \;\ge\; \kappa\,|\nabla\varphi|^{2},
 \qquad \kappa>\frac{n-2}{n},
\end{equation}
should therefore be regarded as a coercivity hypothesis rather than a technical convenience.

\subsection{The counterexample}\label{subsec:counterexample}

We work on the exterior domain
\[
 \Omega = \mathbb{R}^{n}\setminus \overline{B(0,1)}
\]
with the radial weight
\begin{equation}\label{eq:phi-log}
 \varphi = n\log|x|, \qquad x\in\Omega.
\end{equation}
A direct computation shows that $\varphi$ is subharmonic on $\Omega$. Indeed,
\[
 \Delta\varphi = n\,\Delta(\log|x|)
  = \frac{n(n-2)}{|x|^{2}} \ge 0 \quad\text{for }|x|>1.
\]
Thus $\varphi$ satisfies the natural subharmonic condition $\Delta\varphi\ge0$, and
\eqref{eq:naive-estimate} would be the direct higher-dimensional counterpart of the
two-dimensional theorem if it were valid.

For each $m\in\mathbb{N}_{+}$ we introduce a radial function
\[
 u_{m} = |x|^{-1/m}, \qquad x\in\Omega,
\]
and set
\[
 f_{m} = Du_{m}.
\]
A straightforward computation using the fact that $u_{m}$ is radial and that
$D = \sum_{j=1}^{n} e_{j}\partial_{j}$ yields
\begin{equation}\label{eq:def-um-fm}
 f_{m}
  = Du_{m}
  = -\,\frac{1}{m}\,|x|^{-1/m-2}\,x,
 \qquad |x|>1.
\end{equation}
In particular, $u_{m}$ and $f_{m}$ are smooth on $\Omega$ and satisfy the Dirac equation
\[
 Du_{m} = f_{m} \quad\text{in }\Omega.
\]

The following theorem establishes the obstruction.

\begin{theorem}[Obstruction]\label{thm:sharp-obstruction}
Let $n\ge3$, $\Omega=\mathbb{R}^{n}\setminus \overline{B(0,1)}$, and $\varphi=n\log|x|$.
Let $u_{m}$ and $f_{m}$ be defined as in \eqref{eq:def-um-fm}. Then:
\begin{enumerate}
 \item For each $m$, $u_{m}$ is the unique solution of
    \[
     Du = f_{m}\quad\text{in }\Omega
    \]
    with minimal $L^{2}_{\varphi}$–norm in $L^{2}_{\varphi}(\Omega,\mathbb{R}_{n})$.
 \item The ratio between the squared norm of the solution and the weighted norm of the
    data diverges:
    \begin{equation}\label{eq:divergence-ratio}
     \lim_{m\to+\infty}
     \frac{\|u_{m}\|^{2}_{\varphi}}{\displaystyle
         \int_{\Omega}\frac{|f_{m}|^{2}}{\Delta\varphi}\,e^{-\varphi}\,dV}
     \;=\; +\infty.
    \end{equation}
\end{enumerate}
In particular, for this pair $(\Omega,\varphi)$ the naive higher-dimensional H\"{o}rmander-type existence statement fails: there is no finite constant $C>0$ such that every $f\in L^{2}_{\varphi}(\Omega,\mathbb{R}_{n})$ with
\[
 \int_{\Omega}\frac{|f|^{2}}{\Delta\varphi}\,e^{-\varphi}\,dV < \infty
\]
admits a solution $u\in L^{2}_{\varphi}(\Omega,\mathbb{R}_{n})$ of $Du=f$ satisfying
\[
 \|u\|^{2}_{\varphi}
  \le C \int_{\Omega}\frac{|f|^{2}}{\Delta\varphi}\,e^{-\varphi}\,dV.
\]
\end{theorem}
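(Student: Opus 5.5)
The plan is to characterize minimal-norm solutions through orthogonality to the kernel, check that orthogonality for $u_m$ by spherical means, and then compute both sides of the ratio explicitly.

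\textbf{Minimality via orthogonality.} A solution $u\in L^2_\varphi$ of $Du=f$ has minimal norm exactly when $u\perp \ker D$ in $L^2_\varphi(\Omega,\mathbb R_n)$. Here $\ker D$ is taken for the maximal operator; it is closed, so the minimal solution is unique. So for part (1) I will show that
\[
\langle u_m,h\rangle_\varphi=\int_\Omega |x|^{-1/m}\,\operatorname{Re}(\overline h)\,|x|^{-n}\,dV=0
\]
for every $h\in L^2_\varphi$ with $Dh=0$ weakly. Note that $\operatorname{Re}(\overline h)=h_\emptyset$. Since $D^2=-\Delta$, such an $h$ is harmonic componentwise, hence smooth.

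Writing the integral in polar coordinates, it suffices to show that the spherical mean $M(r):=\int_{S^{n-1}}h_\emptyset(r\omega)\,d\omega$ vanishes for every $r>1$.

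\textbf{Vanishing of the spherical means.} I will use the Laurent expansion of a monogenic function on the exterior region $\{|x|>1\}$, namely
\[
h=\sum_k P_k(x)+\sum_k Q_k(x),
\]
where the $P_k$ are inner spherical monogenics (homogeneous of degree $k$) and the $Q_k$ are outer spherical monogenics (homogeneous of degree $-(k+n-1)$). The condition
\[
\int_1^\infty \Bigl(\int_{S^{n-1}}|h(r\omega)|^2d\omega\Bigr)\,\frac{dr}{r}<\infty
\]
together with $L^2(S^{n-1})$-orthogonality of the pieces of different degree kills every inner term, including the constant one, because $\int_1^\infty dr/r=\infty$.

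For the outer terms, each $Q_k$ has components that are restrictions to spheres of spherical harmonics of degree at least $1$. For $k=0$ this is $x|x|^{-n}c$, whose components are odd. Hence every $Q_k$ has zero mean over spheres, so $M(r)=0$ and $u_m\perp\ker D$. I expect this step to be the main obstacle: one has to justify the expansion and the termwise orthogonality for a merely $L^2_\varphi$ weak solution. I would handle it by ellipticity (smoothness) plus expansion on each annulus $1<|x|<R$, and then let $R\to\infty$.

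\textbf{The explicit computation.} Let $\omega_{n-1}$ denote the area of $S^{n-1}$. For the numerator,
\[
\|u_m\|_\varphi^2=\omega_{n-1}\int_1^\infty r^{-2/m}r^{-n}r^{n-1}\,dr=\frac{\omega_{n-1}m}{2}.
\]
For the denominator, $|f_m|^2=m^{-2}r^{-2/m-2}$ (the factor $x$ is a paravector, so Lemma~\ref{lem:Clifford-norm} gives $|x|$ as its norm) and $\Delta\varphi=n(n-2)r^{-2}$. Therefore
\[
\int_\Omega\frac{|f_m|^2}{\Delta\varphi}e^{-\varphi}\,dV=\frac{\omega_{n-1}}{n(n-2)m^2}\int_1^\infty r^{-2/m-1}\,dr=\frac{\omega_{n-1}}{2n(n-2)m}.
\]
The ratio is therefore $n(n-2)m^2\to\infty$, which proves \eqref{eq:divergence-ratio}.

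\textbf{Failure of the naive estimate.} Suppose such a constant $C$ existed. Any solution of $Du=f_m$ has norm at least that of the minimal solution $u_m$. Applying the assumed estimate to $f_m$ would then give $n(n-2)m^2\le C$ for all $m$, which is a contradiction.
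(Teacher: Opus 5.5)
Your proposal is correct and follows the paper's proof essentially step for step. It computes the ratio $m^{2}n(n-2)$ in polar coordinates, and it proves minimality of $u_m$ through orthogonality to the weighted kernel of $D$. That orthogonality comes from the exterior Laurent expansion: the inner terms are excluded by $L^{2}_{\varphi}$-integrability, and the outer terms have vanishing spherical means.

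One point is slightly more careful than the paper. You exclude the inner terms using $L^{2}(S^{n-1})$-orthogonality of the homogeneous pieces with respect to $\operatorname{Re}(f\overline{g})$, whereas the paper only remarks that each nonzero $P_j$ individually fails to lie in $L^{2}_{\varphi}$. This orthogonality holds for the full Clifford inner product but not component by component, so cite it as the standard orthogonality of spherical monogenics.
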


\subsection{Proof of Theorem~\ref{thm:sharp-obstruction}}\label{subsec:proof-thm52}

The proof has two parts. We first compute the ratio in \eqref{eq:divergence-ratio}; we then verify that $u_{m}$ is the minimal-norm solution among all $L^{2}_{\varphi}$–solutions of $Du=f_{m}$.

\subsubsection*{Computation of the norms}

Since $\varphi=n\log|x|$, we have
\[
 e^{-\varphi} = |x|^{-n} \quad\text{for }|x|>1.
\]
The problem is rotationally invariant, so we set
\[
 r = |x|,\qquad \omega = \frac{x}{|x|}\in S^{n-1},
\]
and use spherical coordinates. Let $\sigma_{n-1}$ denote the surface area of the unit sphere
$S^{n-1}\subset\mathbb{R}^{n}$.

First, we compute the norm of $u_{m}$:
\begin{align*}
 \|u_{m}\|^{2}_{\varphi}
  &= \int_{\Omega} |u_{m}|^{2} e^{-\varphi}\,dV
  = \int_{|x|>1} |x|^{-2/m}\,|x|^{-n}\,dV \\
  &= \int_{1}^{\infty}\int_{S^{n-1}} r^{-2/m}\,r^{-n}\,r^{n-1}\,dS(\omega)\,dr \\
  &= \sigma_{n-1}\int_{1}^{\infty} r^{-2/m-1}\,dr
  = \sigma_{n-1}\,\frac{m}{2}.
\end{align*}
Hence
\begin{equation}\label{eq:norm-um}
 \|u_{m}\|^{2}_{\varphi} = \frac{m}{2}\,\sigma_{n-1}.
\end{equation}

Next, we compute the weighted norm of $f_{m}$ with the factor $\Delta\varphi$.
From \eqref{eq:def-um-fm} we obtain
\[
 |f_{m}|^{2}
  = \frac{1}{m^{2}}\,|x|^{-2/m-4}\,|x|^{2}
  = \frac{1}{m^{2}}\,|x|^{-2/m-2},
\]
and we already know $\Delta\varphi = n(n-2)/|x|^{2}$ and $e^{-\varphi}=|x|^{-n}$. Thus
\begin{align*}
 \frac{|f_{m}|^{2}}{\Delta\varphi}\,e^{-\varphi}
  &= \frac{\displaystyle\frac{1}{m^{2}}|x|^{-2/m-2}}{\displaystyle\frac{n(n-2)}{|x|^{2}}}\,
   |x|^{-n} \\
  &= \frac{1}{m^{2} n(n-2)}\,|x|^{-2/m}\,|x|^{-n}.
\end{align*}
Integrating in spherical coordinates yields
\begin{align}
 \int_{\Omega}\frac{|f_{m}|^{2}}{\Delta\varphi}\,e^{-\varphi}\,dV
  &= \frac{1}{m^{2} n(n-2)}
   \int_{1}^{\infty}\int_{S^{n-1}} r^{-2/m}\,r^{-n}\,r^{n-1}\,dS(\omega)\,dr \nonumber\\
  &= \frac{\sigma_{n-1}}{m^{2} n(n-2)}
   \int_{1}^{\infty} r^{-2/m-1}\,dr
  = \frac{\sigma_{n-1}}{2 m n(n-2)}. \label{eq:norm-fm}
\end{align}

Combining \eqref{eq:norm-um} and \eqref{eq:norm-fm} we find
\begin{equation}\label{eq:ratio-explicit}
 \frac{\|u_{m}\|^{2}_{\varphi}}{\displaystyle
     \int_{\Omega}\frac{|f_{m}|^{2}}{\Delta\varphi}\,e^{-\varphi}\,dV}
  = \frac{\displaystyle\frac{m}{2}\sigma_{n-1}}{\displaystyle\frac{\sigma_{n-1}}{2 m n(n-2)}}
  = m^{2} n(n-2).
\end{equation}
In particular,
\[
 \frac{\|u_{m}\|^{2}_{\varphi}}{\displaystyle
     \int_{\Omega}\frac{|f_{m}|^{2}}{\Delta\varphi}\,e^{-\varphi}\,dV}
 \sim C(n)\,m^{2}\quad\text{as }m\to+\infty,
\]
with $C(n)=n(n-2)>0$, which proves the divergence \eqref{eq:divergence-ratio}.

\subsubsection*{Minimality of $u_{m}$}

To complete the proof of Theorem~\ref{thm:sharp-obstruction}, it remains to show that for each
$m$ the function $u_{m}$ is the unique solution of $Du=f_{m}$ in $L^{2}_{\varphi}(\Omega,\mathbb{R}_{n})$
with minimal norm. Equivalently, we must prove that $u_{m}$ is orthogonal in $L^{2}_{\varphi}$ to the
null space of $D$, i.e.\ to the space
\[
 \mathcal{M}_{\varphi}(\Omega)
  = \{h\in L^{2}_{\varphi}(\Omega,\mathbb{R}_{n}) : Dh=0\}
\]
of monogenic functions in the weighted space. In other words, we must verify that
\begin{equation}\label{eq:orthogonality-um}
 \langle u_{m},h\rangle_{\varphi} = 0
 \quad\text{for all }h\in\mathcal{M}_{\varphi}(\Omega),
\end{equation}
where
\[
 \langle u,h\rangle_{\varphi}
  := \int_{\Omega} \operatorname{Re}(u\overline{h})\,e^{-\varphi}\,dV
\]
is the weighted inner product introduced in Section~\ref{section:3}.

The key input is the Laurent expansion for monogenic functions on exterior domains in Clifford
analysis, equivalently the decomposition into inner and outer spherical monogenics; see, for example,
\cite{DelangheSommenSoucek92,GilbertMurray91} and the references therein. For the corresponding
Kelvin transform (inversion) for monogenic functions, see \cite[Example~3.7]{Delanghe01History}.
With our convention
\[
 D=\sum_{j=1}^{n} e_j\,\partial_j,
\]
the relevant Kelvin transform for \emph{left monogenic} functions is
\[
 (K_{\mathrm{mon}} g)(x)
 := \frac{x}{|x|^{n}}\, g\!\left(\frac{x}{|x|^{2}}\right),
 \qquad x\neq 0.
\]
The following lemma records the corresponding Dirac identity for the Kelvin transform.

\begin{lemma}\label{lem:Kelvin-left-monogenic}
Let $U\subset\mathbb R^n\setminus\{0\}$ be open and let
\[
 I(U):=\left\{x\in\mathbb R^n\setminus\{0\}: \frac{x}{|x|^2}\in U\right\}
\]
be its image under the Euclidean inversion $x\mapsto x/|x|^2$. If $g\in C^1(U,\mathbb R_n)$, then
for every $x\in I(U)$,
\begin{equation}\label{eq:Kelvin-Dirac-identity}
 D_x\!\left(\frac{x}{|x|^n} g\!\left(\frac{x}{|x|^2}\right)\right)
 = -\frac{x}{|x|^{n+2}}\,(Dg)\!\left(\frac{x}{|x|^2}\right).
\end{equation}
In particular, if $Dg=0$ on $U$, then $K_{\mathrm{mon}}g$ is left monogenic on $I(U)$.
\end{lemma}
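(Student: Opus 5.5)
The identity \eqref{eq:Kelvin-Dirac-identity} will be obtained by a direct computation: apply the Leibniz rule for $D$ to the product of the Clifford-valued factor $E(x):=x|x|^{-n}$ and $G(x):=g(x/|x|^2)$. The ordering has to be kept track of. Since $D=\sum_j e_j\partial_j$ acts from the left, we have
\[
 D(EG)=(DE)\,G+\sum_{j=1}^n e_j\,E\,\partial_j G .
\]
So the proof has two ingredients. The first is that $E$ is itself left monogenic away from $0$. The second is a formula for $\partial_j G$ via the chain rule through the inversion $y=\sigma(x):=x/|x|^2$, after which $e_jE$ has to be reorganized so that the Dirac operator in the $y$-variable appears.

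\textbf{Step 1.} We have $E=-\frac{1}{n-2}D(|x|^{2-n})$ for $n\ge3$, or we can compute directly. Then $DE=-\frac{1}{n-2}D^2|x|^{2-n}=\frac{1}{n-2}\Delta|x|^{2-n}=0$ on $x\neq0$. For $n=2$ we instead compute $\sum_j e_j\partial_j(x_ke_k|x|^{-2})$ by hand: the scalar part is $-\operatorname{div}(x|x|^{-n})=0$, and the bivector part vanishes because $\partial_j(x_k|x|^{-n})$ is symmetric in $j,k$. The same argument in fact works for all $n$, so the first term drops out.

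\textbf{Step 2.} The Jacobian of $\sigma$ is $\partial_j\sigma_k=|x|^{-2}(\delta_{jk}-2x_jx_k|x|^{-2})$, which is $|x|^{-2}$ times a reflection matrix. Hence
\[
 \partial_jG=|x|^{-2}\sum_k(\delta_{jk}-2\omega_j\omega_k)(\partial_kg)(\sigma(x)),\qquad \omega=x/|x|.
\]
Summing over $j$,
\[
 \sum_j e_jE\,\partial_jG=|x|^{-n-1}\sum_k\Big(\sum_j(\delta_{jk}-2\omega_j\omega_k)e_j\,\omega\Big)\partial_kg .
\]
The key Clifford fact is the reflection identity $\omega e_k\omega=e_k-2\omega_k\omega$, which follows from $e_k\omega=-\omega e_k-2\omega_k$ and $\omega^2=-1$. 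It gives $\sum_j(\delta_{jk}-2\omega_j\omega_k)e_j=e_k-2\omega_k\omega=\omega e_k\omega$. Therefore the coefficient is $\omega e_k\omega\cdot\omega=-\omega e_k$, and we obtain
\[
 \sum_j e_jE\partial_jG=-|x|^{-n-1}\,\omega\sum_k e_k(\partial_kg)(\sigma(x))=-\frac{x}{|x|^{n+2}}(Dg)(\sigma(x)).
\]
This is exactly \eqref{eq:Kelvin-Dirac-identity}. The monogenicity statement follows immediately when $Dg=0$.

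The main obstacle is Step 2, specifically getting the noncommutative reordering right. The factor $E$ sits between $e_j$ and $\partial_jG$, and the reflection identity must be applied with the correct side of multiplication and the correct sign; the scalar-valued Jacobian entries commute freely, so they cause no trouble. I would double-check the sign by testing the identity on $g=x$ (or on $g\equiv1$, where both sides vanish by Step 1), and by checking $n=2$ against the complex-analytic inversion.
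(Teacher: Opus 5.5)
Your proposal is correct and follows essentially the same route as the paper: the Leibniz rule, the vanishing $D(x|x|^{-n})=0$, then the chain rule through the inversion Jacobian $|x|^{-2}(\delta_{jk}-2\omega_j\omega_k)$ followed by a Clifford reordering. The differences are cosmetic. You obtain $DE=0$ from $E=-\frac{1}{n-2}D|x|^{2-n}$ and harmonicity (or from the divergence/symmetry argument), where the paper differentiates directly. Your reflection identity $\omega e_k\omega=e_k-2\omega_k\omega$ repackages the paper's step $e_jx+2x_j=-xe_j$.
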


\begin{proof}
Write $r:=|x|$, $y:=x/r^2$, and $J(x):=x r^{-n}$. First,
\[
 \begin{aligned}
 DJ
 &= \sum_{i=1}^n e_i\,\partial_i(xr^{-n})\\
 &= \sum_{i=1}^n e_i\bigl(e_i r^{-n} + x\,\partial_i(r^{-n})\bigr)\\
 &= -n r^{-n} - n r^{-n-2}\sum_{i=1}^n x_i e_i x\\
 &= -n r^{-n} - n r^{-n-2}x^2
 = 0,
 \end{aligned}
\]
since $x^2=-r^2$. Next, let
\[
 g_j(y):=(\partial_{y_j}g)(y),\qquad j=1,\dots,n.
\]
Because
\[
 \partial_{x_i}y_j = \delta_{ij} r^{-2} - 2x_i x_j r^{-4},
\]
the product rule and the chain rule give
\begin{align*}
 D_x\bigl(J(x)g(y)\bigr)
  &= (DJ)g(y) + \sum_{i=1}^n e_i J(x)\,\partial_{x_i}\bigl(g(y)\bigr) \\
  &= \sum_{i,j=1}^n e_i J(x) g_j(y)\,\partial_{x_i}y_j \\
  &= r^{-n-2}\sum_{j=1}^n e_j x\,g_j(y)
   -2r^{-n-4}\sum_{j=1}^n \Bigl(\sum_{i=1}^n x_i e_i x\Bigr) x_j g_j(y) \\
  &= r^{-n-2}\sum_{j=1}^n \bigl(e_jx+2x_j\bigr)g_j(y).
\end{align*}
Using the Clifford relation $e_jx+xe_j=-2x_j$, we obtain $e_jx+2x_j=-xe_j$, and therefore
\[
 D_x\bigl(J(x)g(y)\bigr)
 = -r^{-n-2}x\sum_{j=1}^n e_j g_j(y)
 = -\frac{x}{|x|^{n+2}}\,(Dg)(y),
\]
which is exactly \eqref{eq:Kelvin-Dirac-identity}.
\end{proof}

\begin{remark}
Another common convention uses the Clifford inverse
\[
 x^{-1}=\frac{\overline{x}}{|x|^2}=-\frac{x}{|x|^2},
\]
since $\overline{x}=-x$ for vectors. The corresponding Kelvin transform differs from
$K_{\mathrm{mon}}$ only by the reflection $x\mapsto -x$, and reflection preserves left monogenicity
because $D[g(-x)]=-(Dg)(-x)$. We use the normalization above because if $M_j$ is homogeneous of
degree $j$, then
\[
 M_j\!\left(\frac{x}{|x|^2}\right)=|x|^{-2j}M_j(x),
\]
and hence
\begin{equation}\label{eq:Kelvin-homogeneous}
 (K_{\mathrm{mon}}M_j)(x)=\frac{x\,M_j(x)}{|x|^{n+2j}}.
\end{equation}
Thus the outer homogeneous monogenic terms are precisely Kelvin transforms of homogeneous
monogenic polynomials.
\end{remark}

By elliptic regularity,
every $h\in\mathcal{M}_{\varphi}(\Omega)$ is smooth on $\Omega$, and on $\Omega$ it admits the expansion
\begin{equation}\label{eq:Laurent-monogenic}
 h(x)=\sum_{j=0}^{\infty} P_{j}(x)+\sum_{j=0}^{\infty}
 \frac{x\,M_{j}(x)}{|x|^{n+2j}},
\end{equation}
with local uniform convergence on $\Omega$, where each $P_j$ and each $M_j$ is a homogeneous left
monogenic polynomial of degree $j$.

Because $e^{-\varphi}=|x|^{-n}$ and $h\in L^{2}_{\varphi}(\Omega,\mathbb{R}_{n})$, the polynomial part
must vanish identically: a nonzero homogeneous monogenic polynomial $P_{j}$ of degree $j$ satisfies
$|P_{j}(r\omega)|\asymp r^{j}$ on some spherical cap, and therefore
\[
 \int_{|x|>R} |P_{j}(x)|^{2} e^{-\varphi}\,dV
 \gtrsim \int_{R}^{\infty} r^{2j-1}\,dr = +\infty.
\]
Hence the expansion of $h$ on $\Omega$ reduces to the outer monogenic part,
\begin{equation}\label{eq:Kelvin-monogenic}
 h(x)=\sum_{k=1}^{\infty} Q_{k}(x),
 \qquad
 Q_k(x):=\frac{x\,M_{k-1}(x)}{|x|^{n+2k-2}},
 \qquad |x|>1,
\end{equation}
again with local uniform convergence.

For later use, set
\[
 \widetilde Q_k(x):=x\,M_{k-1}(x).
\]
Then each $\widetilde Q_k$ is a homogeneous harmonic polynomial of degree $k$, because
\[
 \Delta \widetilde Q_k
 = \Delta\bigl(xM_{k-1}\bigr)
 = 2DM_{k-1}+x\Delta M_{k-1}=0,
\]
where we used that monogenic polynomials are harmonic. Writing $x=r\omega$ with $r>1$ and
$\omega\in S^{n-1}$, homogeneity gives
\[
 Q_k(r\omega)=r^{-(n+k-2)}\,\widetilde Q_k(\omega).
\]
For each fixed $r>1$ the series \eqref{eq:Kelvin-monogenic} converges uniformly on $rS^{n-1}$, so we may
integrate termwise over the sphere. Each component of $\widetilde Q_k(\omega)$ is a spherical harmonic of
positive degree $k$ and is therefore orthogonal on $S^{n-1}$ to constants; see, for example, \cite{ArmitageGardiner01}. In particular,
\begin{equation}\label{eq:spherical-mean-zero}
 \int_{S^{n-1}} \operatorname{Re}\!\bigl(h(r\omega)\bigr)\,dS(\omega)=0,
 \qquad r>1.
\end{equation}

Now $u_{m}(r)=r^{-1/m}$ is radial, and by the Cauchy--Schwarz inequality
\[
 \int_{\Omega} |u_{m}|\,|h|\,e^{-\varphi}\,dV
 \le \|u_{m}\|_{\varphi}\,\|h\|_{\varphi}<\infty.
\]
Therefore Fubini's theorem applies. Using $e^{-\varphi}=r^{-n}$ and
$dV=r^{n-1}\,dr\,dS(\omega)$, we obtain
\[
 \langle u_{m},h\rangle_{\varphi}
 = \int_{1}^{\infty} r^{-1/m-1}
   \left(\int_{S^{n-1}} \operatorname{Re}\!\bigl(h(r\omega)\bigr)\,dS(\omega)\right)dr
 =0
\]
by \eqref{eq:spherical-mean-zero}. This proves \eqref{eq:orthogonality-um}.

Since $u_{m}$ is orthogonal to the null space of $D$ in $L^{2}_{\varphi}$, it is the unique solution of
$Du=f_{m}$ of minimal norm, by the usual Hilbert space projection argument (see, e.g.,
\cite{Hormander73,DemaillyBook}). This completes the proof of
Theorem~\ref{thm:sharp-obstruction}.

\subsection{Interpretation and implications}\label{subsec:interpretation}

Theorem~\ref{thm:sharp-obstruction} shows that, in dimensions $n\ge3$, subharmonicity alone does not yield a H\"{o}rmander-type weighted existence theorem for the Dirac operator. The estimate \eqref{eq:naive-estimate} fails already for the minimal-norm solution corresponding to the explicit data $f_m$, so the obstruction is not a matter of improving the constant in the bound.

Accordingly, the stronger condition \eqref{eq:structure-kappa} appearing in Theorem~\ref{thm:classical-L2} should be interpreted as a coercivity hypothesis within the classical Bochner framework. The counterexample also motivates the weighted identity developed in Section~\ref{section:6}, which yields solvability results for weights outside that classical regime, including the Gaussian weight.

It remains of interest to determine whether analogous phenomena persist for broader classes of anisotropic or non-polynomial weights and for other Clifford-valued systems.

\section{A weighted identity and coercive estimates}\label{section:6}

The obstruction obtained in Section~\ref{section:5} shows that the classical weighted \(L^{2}\) identity from \cite{JiZhu17} is not sufficient once the condition
\[
\Delta\varphi \geq \kappa|\nabla\varphi|^{2}, \quad \kappa > \frac{n-2}{n},
\]
fails. In this section we derive an alternative weighted identity for the conjugated unknown \(U:=ue^{-\varphi/2}\). The identity will be used to obtain coercive estimates for radial powers, for the weight \(x_{1}^{2}\), and for small anisotropic perturbations of the Gaussian weight.

\begin{proposition}[General weighted identity]
\label{prop:general-identity}
Let \(\Omega \subset \mathbb{R}^{n}\) be a domain, and let \(\varphi, \eta \in C^{\infty}(\Omega, \mathbb{R})\).
Define
\[
\YY = \sum_{j=1}^{n} Y_{j} e_{j},
\qquad Y_{j} \in C^{\infty}(\Omega,\mathbb{R}),
\]
and write \(U := u e^{-\varphi/2}\) for \(u \in C_c^{\infty}(\Omega,\mathbb{R}_{n})\).
Then
\[
\begin{aligned}
&\|D_{\varphi}^{*}u\|_{\varphi}^{2}
  - \left\| \eta U + \sum_{j=1}^{n} e_{j} \YY \partial_{j} U \right\|_{0}^{2} \\
&= \int_{\Omega}
  \left(
   \frac{1}{4}|D\varphi|^{2}
   - \eta^{2}
   - 2 \sum_{j=1}^{n} \partial_{j}(\eta Y_{j})
   + \frac{1}{2}\Delta |\YY|^{2}
  \right) |u|^{2} e^{-\varphi}\,dV \\
&\quad
  - \langle DU, (D\varphi + 2\eta \YY) U \rangle_{0}
  + \sum_{j=1}^{n} \int_{\Omega} (1 - |\YY|^{2}) |\partial_{j} U|^{2}\,dV \\
&\quad
  + \sum_{j,k=1}^{n}
   \left\langle
    \partial_{k}\!\left( \YY e_{k} e_{j} \YY \right)\partial_{j} U,
    U
   \right\rangle_{0}.
\end{aligned}
\]
\end{proposition}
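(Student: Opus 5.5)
The identity is an algebraic consequence of conjugating $D^*_\varphi$ by $e^{-\varphi/2}$, expanding a square, and integrating by parts. The plan has four steps.

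\textbf{Step 1: conjugate.} By Proposition~\ref{prop:formal-adjoint}, $D^*_\varphi u = Du-(D\varphi)u$. Substituting $u=Ue^{\varphi/2}$ gives
\[
D^*_\varphi u = e^{\varphi/2}\Bigl(DU - \tfrac12 (D\varphi)U\Bigr),
\]
and therefore
\[
\|D^*_\varphi u\|_\varphi^2=\bigl\|DU-\tfrac12 D\varphi\, U\bigr\|_0^2 .
\]
Expanding, and using Lemma~\ref{lem:Clifford-norm} for the paravector $D\varphi$, we obtain
\[
\|DU\|_0^2+\tfrac14\int|D\varphi|^2|U|^2\,dV-\langle DU,D\varphi\,U\rangle_0 .
\]
Since $|U|^2=|u|^2e^{-\varphi}$, this already produces the $\tfrac14|D\varphi|^2$ term and the $D\varphi$ part of the cross term in the statement.

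\textbf{Step 2: expand the subtracted square.} Set $W:=\sum_j e_j\YY\partial_jU = -\sum_j\YY e_j\partial_j U - 2\sum_j Y_j\partial_j U$, using $e_j\YY=-\YY e_j-2Y_j$. Then
\[
\|\eta U+W\|_0^2=\|\eta U\|_0^2+2\langle \eta U,W\rangle_0+\|W\|_0^2 .
\]
\begin{itemize}
\item[(a)] Cross term. Write $\langle \eta U,W\rangle_0 = \sum_j\langle \eta U, e_j\YY\partial_jU\rangle_0$. Using the first form of $W$, the piece $-\langle\eta U,\YY DU\rangle_0$ becomes, by Lemma~\ref{lem:weighted-inner-basic}(3) with $\overline{\YY}=-\YY$, the term $\langle \eta\YY U,DU\rangle_0$. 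This matches $-\langle DU,2\eta\YY U\rangle_0$ after the factor $2$ and the overall sign, and I will need to track signs carefully. The scalar piece $-2\sum_j\langle \eta U,Y_j\partial_jU\rangle_0 = -\sum_j\int \eta Y_j\,\partial_j|U|^2\,dV$ integrates by parts to $\int\sum_j\partial_j(\eta Y_j)|U|^2\,dV$. Together with the factor $2$ and the minus sign this gives the $-2\sum_j\partial_j(\eta Y_j)$ term. Some care is needed with which parts of the cross term combine.
\item[(b)] Quadratic term. Write $\|W\|_0^2=\sum_{j,k}\langle e_j\YY\partial_jU,e_k\YY\partial_kU\rangle_0$. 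Moving $e_k\YY$ across via Lemma~\ref{lem:weighted-inner-basic}(3) applied twice gives $\langle \YY e_k e_j\YY\partial_jU,\partial_kU\rangle_0$ (the two conjugations contribute signs that cancel).
\end{itemize}

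\textbf{Step 3: integrate by parts in $\partial_k$.} This moves the derivative off $\partial_kU$ and splits $\|W\|_0^2$ into
\[
-\sum_{j,k}\langle\partial_k(\YY e_ke_j\YY)\partial_jU,U\rangle_0
\quad\text{and}\quad
-\sum_{j,k}\langle \YY e_ke_j\YY\,\partial_k\partial_jU,U\rangle_0 .
\]
In the second piece, symmetrize in $j,k$ using $e_ke_j+e_je_k=-2\delta_{jk}$. This gives $\YY\YY=-|\YY|^2$, so the piece reduces to $-\sum_j\langle |\YY|^2\partial_j^2U,U\rangle_0$. Integrating by parts once more yields $\sum_j\int|\YY|^2|\partial_jU|^2\,dV$ plus $\tfrac12\int\nabla|\YY|^2\cdot\nabla|U|^2\,dV$. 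The latter equals $-\tfrac12\int\Delta|\YY|^2\,|U|^2\,dV$, which supplies the $\tfrac12\Delta|\YY|^2$ term.

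\textbf{Step 4: assemble.} The remaining $\|DU\|_0^2$ from Step~1 equals $\sum_j\|\partial_jU\|_0^2$ by the unweighted Bochner identity \eqref{eq:basic-Bochner} with $\varphi=0$, i.e.\ $\|Du\|^2=\|\nabla u\|^2$ for compact support. Combined with $-\sum_j\int|\YY|^2|\partial_jU|^2\,dV$ from Step~3, this gives $\sum_j\int(1-|\YY|^2)|\partial_jU|^2\,dV$. Subtracting the expansion of Step~2 from that of Step~1 and collecting terms should give exactly the stated identity.

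\textbf{Main obstacle.} The hard part is keeping the non-commutative Clifford ordering and signs consistent in the cross term and in the second-order term, since the two forms of $W$ must be used in the right places. I would verify each sign against the scalar test case $\YY=Y_1e_1$ with $n=1$-type reductions before trusting the assembly.
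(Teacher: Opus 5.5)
Your proposal is correct and follows essentially the same route as the paper: conjugate to $U=ue^{-\varphi/2}$, split the cross term using $e_j\YY=-\YY e_j-2Y_j$, move $e_k\YY$ across to obtain $\langle \YY e_k e_j\YY\,\partial_jU,\partial_kU\rangle_0$, integrate by parts in $x_k$, and symmetrize the second-order part to $|\YY|^2\Delta U$. The only difference is cosmetic: you write $\|DU\|_0^2=\sum_j\|\partial_jU\|_0^2$ and integrate $-\langle|\YY|^2\Delta U,U\rangle_0$ by parts separately, whereas the paper keeps $\|DU\|_0^2=-\langle\Delta U,U\rangle_0$ and handles $\langle(|\YY|^2-1)\Delta U,U\rangle_0$ in one step; all the signs you record are correct.
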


\begin{proof}
Since \(U = u e^{-\varphi/2}\), Proposition~\ref{prop:formal-adjoint} gives
\[
D_{\varphi}^{*}u
  = e^{\varphi/2}\left(DU-\frac12 D\varphi\cdot U\right).
\]
Hence
\begin{equation}\label{eq:Dstar_norm}
\begin{aligned}
\|D_\varphi^*u\|_\varphi^2
 &= \left\|DU-\frac12 D\varphi\cdot U\right\|_0^2 \\
 &= \|DU\|_0^2
  +\frac14\|D\varphi\cdot U\|_0^2
  -\langle DU,D\varphi\cdot U\rangle_0 \\
 &= -\langle \Delta U,U\rangle_0
  +\frac14\|D\varphi\cdot U\|_0^2
  -\langle DU,D\varphi\cdot U\rangle_0.
\end{aligned}
\end{equation}

Set
\[
Q:=\eta U+\sum_{j=1}^n e_j\YY\partial_j U.
\]
Expanding \(\|Q\|_0^2\) gives
\[
\|Q\|_0^2
 = \|\eta U\|_0^2
  + \Sigma_1
  - \Sigma_2
  + 2\langle \eta\YY U,DU\rangle_0,
\]
where
\[
\Sigma_1:=\left\|\sum_{j=1}^n e_j\YY\partial_j U\right\|_0^2
\]
and
\[
\Sigma_2:=4\sum_{j=1}^n\langle \eta U,Y_j\partial_j U\rangle_0.
\]
Indeed, using \(e_j\YY=-\YY e_j-2Y_j\) and Lemma~\ref{lem:weighted-inner-basic}(2) with
\(\varphi=0\), we get
\[
2\sum_{j=1}^n\langle \eta U,e_j\YY\partial_jU\rangle_0
 = -4\sum_{j=1}^n\langle \eta U,Y_j\partial_j U\rangle_0
  +2\langle \eta\YY U,DU\rangle_0.
\]

We first compute \(\Sigma_2\). Since \(\eta\) and the \(Y_j\) are real-valued,
\[
\Sigma_2
 = 4\sum_{j=1}^n \sum_A \int_\Omega \eta Y_j U_A\,\partial_j U_A\,dV
 = 2\sum_{j=1}^n \sum_A \int_\Omega \eta Y_j\,\partial_j(U_A^2)\,dV.
\]
Because \(U\) has compact support, integration by parts yields
\begin{equation}\label{eq:Sigma2}
 \Sigma_2
  = -2\sum_{j=1}^n \int_\Omega \partial_j(\eta Y_j)\,|U|^2\,dV.
\end{equation}

Next, for each $j,k$ we have
\begin{align*}
\langle e_j\YY\partial_j U, e_k\YY\partial_k U\rangle_0
 &= -\langle e_k e_j\YY\partial_j U, \YY\partial_k U\rangle_0 \\
 &= \langle \YY e_k e_j\YY\partial_j U, \partial_k U\rangle_0,
\end{align*}
where we used Lemma~\ref{lem:weighted-inner-basic}(2) to move $e_k$ and
Lemma~\ref{lem:weighted-inner-basic}(3) to move the vector field $\YY$, noting that
$\overline{\YY}=-\YY$. Since $U$ has compact support, integration by parts in the
$x_k$-variable yields
\begin{align}
 \Sigma_1
  &= -\sum_{j,k=1}^n
   \Bigl(
     \left\langle \partial_k(\YY e_k e_j\YY)\partial_j U,U\right\rangle_0
     + \left\langle \YY e_k e_j\YY\,\partial_k\partial_j U,U\right\rangle_0
   \Bigr). \label{eq:Sigma1}
\end{align}
We now simplify the second-order term. If $j\neq k$, then
\[
\YY e_k e_j\YY + \YY e_j e_k\YY
= \YY (e_k e_j + e_j e_k) \YY = 0,
\]
while $\partial_k\partial_j U = \partial_j\partial_k U$. Hence the off-diagonal terms cancel in
pairs. For $j=k$, we have
\[
\YY e_j e_j\YY = -\YY^2 = |\YY|^2,
\]
because $\YY$ is a vector field and therefore $\YY^2=-|\YY|^2$. Consequently,
\[
-\sum_{j,k=1}^n
 \left\langle \YY e_k e_j\YY\,\partial_k\partial_j U,U\right\rangle_0
 = -\langle |\YY|^2\Delta U,U\rangle_0.
\]
Finally, integration by parts gives
\begin{equation}\label{eq:final step}
\langle (|\YY|^2-1)\Delta U,U\rangle_0
 = \int_\Omega
  \left(
    (1-|\YY|^2)\sum_{j=1}^n |\partial_j U|^2
    +\frac12 \Delta|\YY|^2\,|U|^2
  \right)\,dV.
\end{equation}

Combining \eqref{eq:Dstar_norm}, \eqref{eq:Sigma2}, \eqref{eq:Sigma1}, and \eqref{eq:final step},
and recalling that \(|U|^2=|u|^2e^{-\varphi}\), we obtain the stated identity.
\end{proof}

\begin{remark}
In dimension \(n=2\), if \(\varphi\in C^{2}(\Omega,\mathbb R)\) is subharmonic and
\(|D\varphi|>0\), one may choose
\[
\YY=\frac{D\varphi}{|D\varphi|},
\qquad
\eta=-\frac12 |D\varphi|.
\]
Then \(|\YY|\equiv1\), the coefficient \(\frac14|D\varphi|^{2}-\eta^{2}\) vanishes,
\(-2\sum_{j=1}^{2}\partial_j(\eta Y_j)=\Delta\varphi\), and
\(D\varphi+2\eta\YY=0\). The term involving \(1-|\YY|^{2}\) also vanishes, and the last
summation in Proposition~\ref{prop:general-identity} disappears by the two-dimensional
Clifford algebra identities. The twisted norm becomes
\[
2\int_\Omega \left(\sum_{j,A}|\partial_j u_A|^2-\frac12 |Du|^2\right)e^{-\varphi}\,dV.
\]
Hence Proposition~\ref{prop:general-identity} reduces to the two-dimensional weighted
identity used in Proposition~\ref{prop:apriori}(1), and therefore recovers the estimate in
Theorem~\ref{thm:classical-L2}(1).
\end{remark}

We now apply Proposition~\ref{prop:general-identity} with specific choices of \(\eta\) and \(\YY\). The first applications concern radial and quadratic weights.

\begin{proposition}[Radial weight estimate]\label{prop:radial-estimate}
Let $m \in \mathbb{R}\setminus\{0\}$, let $\varphi = |x|^m$, and let
$\Omega \subset \mathbb{R}^n \setminus \{0\}$ be a domain. For any
$u \in C_c^{\infty}(\Omega, \mathbb{R}_n)$, we have
\[
\|D_{\varphi}^* u\|_{\varphi}^2
 = \left\| |x| D \left( \frac{x}{|x|^2} u \right) \right\|_{\varphi}^2
  + m^2 \int_{\Omega} |x|^{m-2} |u|^2 e^{-\varphi}\,dV.
\]
In particular, for the Gaussian weight $\varphi = |x|^2$,
\[
\|D_{\varphi}^* u\|_{\varphi}^2 \ge 4 \|u\|_{\varphi}^2.
\]
\end{proposition}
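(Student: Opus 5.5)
The plan is to reduce the identity to an explicit comparison of two quadratic forms in $u$ and to absorb the mismatch into a single zeroth-order term by one integration by parts. Write $r=|x|$, $E:=\sum_j x_j\partial_j$ for the Euler operator, and note that $D\varphi=m r^{m-2}x$, so by Proposition~\ref{prop:formal-adjoint}
\[
D^*_\varphi u = Du - m r^{m-2}\,x\,u .
\]

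\textbf{Step 1: expand the first term on the right.} I will rewrite $|x|D\bigl(xu/|x|^2\bigr)$ using the computations already made in the proof of Lemma~\ref{lem:Kelvin-left-monogenic}. First,
\[
D\bigl(x r^{-2}\bigr) = -n r^{-2} - 2r^{-4}x^2 = (2-n) r^{-2}.
\]
Second, the relation $e_jx=-xe_j-2x_j$ gives $\sum_j e_j x\,\partial_j u = -xDu - 2Eu$. Together these give
\[
|x|\,D\Bigl(\frac{x}{|x|^2}u\Bigr) = r^{-1}\bigl((2-n)u - xDu - 2Eu\bigr).
\]
By Lemma~\ref{lem:Clifford-norm}, left multiplication by the unit paravector $x/r$ is an isometry, and $x\cdot x=-r^2$. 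So the weighted norm of this quantity equals
\[
\bigl\|Du - r^{-2}(2-n)\,x u + 2r^{-2}x\,Eu\bigr\|_\varphi^2 .
\]
This has the same principal part $Du$ as $D^*_\varphi u$. The two expressions differ only in the first-order radial term $2r^{-2}xEu$ and in the zeroth-order coefficients.

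\textbf{Step 2: expand and compare.} Expanding both squared norms, every term becomes one of four types:
\begin{enumerate}
\item $\|Du\|^2_\varphi$, which is common to both sides and cancels;
\item zeroth-order terms $\int c(r)|u|^2e^{-\varphi}$;
\item cross terms $\langle Du, x u\rangle_\varphi$ and $\langle Du, xEu\rangle_\varphi$;
\item the term $\|r^{-1}Eu\|_\varphi^2$.
\end{enumerate}
To handle type (iii), I will use $\operatorname{Re}(a\bar b)=\operatorname{Re}(b\bar a)$ and the identity $\overline{x}\,e_j+\overline{e_j}\,x=2x_j$ (scalar part) to convert $\operatorname{Re}\bigl(Du\,\overline{xv}\bigr)$ into $-\operatorname{Re}(Eu\,\bar v)$ plus a bivector contribution $\sum_{j<k}(x_je_k-x_ke_j)$-type term. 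I expect the bivector part either to cancel between the two sides or to integrate by parts to zero against a radial factor.

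Every remaining term involving $Eu$ is radial. I will integrate it by parts using
\[
\int g(r)\,\operatorname{Re}(Eu\,\bar u)\,dV=\tfrac12\int g(r)\,E|u|^2\,dV=-\tfrac12\int \operatorname{div}\bigl(g(r)x\bigr)|u|^2\,dV,
\]
with $g$ absorbing the factor $e^{-\varphi}=e^{-r^m}$, whose radial derivative produces the factor $m r^m$. The term $\|r^{-1}Eu\|^2_\varphi$ should cancel against the $\langle Du,xEu\rangle$ cross term after this manipulation. What survives is a purely zeroth-order term, and its coefficient must be computed to be exactly $m^2 r^{m-2}$. The $(2-n)$ contributions should cancel against $\operatorname{div}(x)=n$ and the $-2$ from $\operatorname{div}(r^{-2}x)$.

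\textbf{Step 3 (main obstacle): the Clifford bookkeeping in Step 2.} The work lies in confirming that the non-scalar cross terms cancel and that the surviving coefficient is $m^2 r^{m-2}$, with no leftover $m(n-2)$ term. As a cross-check I would alternatively apply Proposition~\ref{prop:general-identity} with
\[
\YY=\frac{x}{|x|},\qquad \eta=-\frac m2\,r^{m-1}.
\]
With this choice $D\varphi+2\eta\YY=0$, $\tfrac14|D\varphi|^2-\eta^2=0$ and $|\YY|\equiv1$, and the remaining divergence and curvature terms must combine to $m^2r^{m-2}$.

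Finally, for $m=2$ the identity gives $\|D^*_\varphi u\|^2_\varphi\ge 4\|u\|^2_\varphi$, since $r^{m-2}=1$ and the first term on the right is nonnegative.
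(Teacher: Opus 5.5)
\textbf{Comparison with the paper.} Your direct-expansion route is genuinely different from the paper's. The paper applies Proposition~\ref{prop:general-identity} with $\YY=x/r$ and $\eta=-\tfrac m2 r^{m-1}+\tfrac{2-n}{r}$, chosen so that the twisted quantity is exactly $e^{-\varphi/2}\,rD(xu/r^2)$. It then removes the leftover cross term $2(n-2)\langle DU,xU/r^2\rangle_0$ and the $(n-2)^2r^{-2}$ coefficient using \eqref{eq:radial-step3}. Your route can be made to work, but as written it does not prove the identity. Step~2 is the whole proof and is only anticipated, and the one explicit reduction feeding it has the wrong sign. Left multiplication of $r^{-1}\bigl((2-n)u-xDu-2Eu\bigr)$ by $x/r$ gives
\[
V:=Du+(2-n)r^{-2}xu-2r^{-2}xEu,
\]
not $Du-(2-n)r^{-2}xu+2r^{-2}xEu$. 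This is not cosmetic. By Lemma~\ref{lem:weighted-inner-basic}(3), $\langle Du,xv\rangle_\varphi=-\langle xDu,v\rangle_\varphi$. Moreover $-xD=E+\Gamma$ with $\Gamma:=-\sum_{i<j}e_ie_j(x_i\partial_j-x_j\partial_i)$; this $\Gamma$ is your ``bivector contribution''. Hence the cross term $-4\langle Du,r^{-2}xEu\rangle_\varphi$ in $\|V\|_\varphi^2$ equals $-4\|r^{-1}Eu\|_\varphi^2-4\langle\Gamma u,r^{-2}Eu\rangle_\varphi$, and it cancels the $4\|r^{-1}Eu\|_\varphi^2$ coming from the square. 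With your sign the cross term is $+4\langle Du,r^{-2}xEu\rangle_\varphi$, the $\|r^{-1}Eu\|_\varphi^2$ terms add up to $8\|r^{-1}Eu\|_\varphi^2$, and the expansion cannot close.

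\textbf{The $\Gamma$-terms and the cross-check.} The second gap is the treatment of the $\Gamma$-terms, which you leave as an expectation. The mechanism ``integrate by parts to zero against a radial factor'' is false for them. $\Gamma$ is a sum of products of the commuting skew operators $e_ie_j\cdot$ and $x_i\partial_j-x_j\partial_i$, so it is \emph{symmetric} with respect to any radial weight. Consequently $\int_\Omega c(r)\langle\Gamma u,u\rangle e^{-\varphi}\,dV$ does not vanish in general; for example, $\Gamma x=(n-1)x$. The actual mechanism is that $\Gamma$ also commutes with $E$ and with radial multipliers, so
\[
\int_\Omega r^{-2}\langle\Gamma u,Eu\rangle e^{-\varphi}\,dV=-\frac12\int_\Omega\bigl((n-2)r^{-2}-mr^{m-2}\bigr)\langle\Gamma u,u\rangle e^{-\varphi}\,dV .
\]
Then $-4$ times this exactly cancels the zeroth-order terms $2\int_\Omega\bigl((2-n)r^{-2}+mr^{m-2}\bigr)\langle\Gamma u,u\rangle e^{-\varphi}\,dV$ in $\|V\|_\varphi^2-\|D_\varphi^*u\|_\varphi^2$. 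What is left is
\[
\int_\Omega 2\bigl((n-2)r^{-2}+mr^{m-2}\bigr)\langle Eu,u\rangle e^{-\varphi}\,dV+\int_\Omega\bigl((n-2)^2r^{-2}-m^2r^{2m-2}\bigr)|u|^2e^{-\varphi}\,dV,
\]
and your divergence step turns this into $-m^2\int_\Omega r^{m-2}|u|^2e^{-\varphi}\,dV$, as required. Finally, your cross-check is mis-specified. With $\eta=-\tfrac m2 r^{m-1}$ the twisted quantity is $e^{-\varphi/2}\sum_j e_j\tfrac{x}{r}\partial_j u$ (compare Proposition~\ref{prop:general-application}), which differs from $e^{-\varphi/2}rD(xu/r^2)$ by $\tfrac{2-n}{r}U$. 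The coefficient is then $\Delta\varphi=m(m+n-2)r^{m-2}$. Since the curvature term vanishes for radial scalar $u$, the remainder is not $m^2r^{m-2}$ when $n\ge3$. The shift $\tfrac{2-n}{r}$ in $\eta$ is precisely the paper's key choice.
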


\begin{proof}
Write \(r:=|x|\) and \(U:=u e^{-\varphi/2}\). We apply
Proposition~\ref{prop:general-identity} with
\[
 \YY = \frac{x}{r},
 \qquad
 \eta = -\frac12 m r^{m-1}+\frac{2-n}{r}.
\]
Then \(|\YY|=1\) and \(\Delta |\YY|^{2}=0\). Since
\[
 D\varphi = m r^{m-2}x,
 \qquad
 D\varphi + 2\eta\YY = 2(2-n)\frac{x}{r^2},
\]
the general identity becomes
\begin{align}
&\|D_{\varphi}^* u\|_{\varphi}^2
 - \left\| \eta U+\sum_{j=1}^n e_j\YY\partial_j U \right\|_{0}^2 \nonumber\\
&=
\int_\Omega
 \left(
   \frac14 |D\varphi|^2 - \eta^2 - 2\sum_{j=1}^n\partial_j(\eta Y_j)
 \right)
 |u|^2e^{-\varphi}\,dV
 +2(n-2)\left\langle DU,\frac{x}{r^2}U\right\rangle_0 \nonumber\\
&\quad
 +\sum_{j,k=1}^n
  \left\langle \partial_k(\YY e_k e_j\YY)\partial_j U,U\right\rangle_0.
\label{eq:radial-step1}
\end{align}
Now
\[
 \eta Y_j = -\frac12 \partial_j\varphi + (2-n)\frac{x_j}{r^2},
\]
so
\[
 -2\sum_{j=1}^n \partial_j(\eta Y_j)
 = \Delta\varphi - 2(2-n)\sum_{j=1}^n \partial_j\!\left(\frac{x_j}{r^2}\right).
\]
A direct computation gives
\[
 \frac14 |D\varphi|^2 - \eta^2
  = m(2-n)r^{m-2} - \frac{(n-2)^2}{r^2},
\]
and since
\[
 \Delta\varphi = m(m+n-2)r^{m-2},
 \qquad
 \sum_{j=1}^n \partial_j\!\left(\frac{x_j}{r^2}\right)=\frac{n-2}{r^2},
\]
the coefficient of \(|u|^2e^{-\varphi}\) in \eqref{eq:radial-step1} simplifies to
\[
 m^2 r^{m-2}+\frac{(n-2)^2}{r^2}.
\]

We next identify the twisted norm. Since
\[
D\!\left(\frac{x}{r^2}\right)=\frac{2-n}{r^2},
\]
we have
\begin{align*}
 r D\!\left(\frac{x}{r^2}u\right)
  &= r D\!\left(\frac{x}{r^2}U e^{\varphi/2}\right) \\
  &= e^{\varphi/2}
   \left(
     \frac{2-n}{r}U
     + \sum_{j=1}^n e_j\frac{x}{r}\partial_j U
     + \frac12 \sum_{j=1}^n e_j\frac{x}{r}U\,\partial_j\varphi
   \right).
\end{align*}
Because $D\varphi = m r^{m-2}x$ and $x^2=-r^2$, we compute
\[
 \frac12\sum_{j=1}^n e_j\frac{x}{r}U\,\partial_j\varphi
  = \frac12\frac{D\varphi\,x}{r}U
  = \frac12\frac{m r^{m-2}x^2}{r}U
  = -\frac12 m r^{m-1}U.
\]
Combining these identities, we obtain
\[
 r D\!\left(\frac{x}{r^2}u\right)
 = e^{\varphi/2}
  \left(
    \eta U+\sum_{j=1}^n e_j\YY\partial_j U
  \right).
\]
Therefore
\begin{equation}\label{eq:radial-step2}
 \left\| \eta U+\sum_{j=1}^n e_j\YY\partial_j U \right\|_{0}
 = \left\| r D\!\left(\frac{x}{r^2}u\right)\right\|_{\varphi}.
\end{equation}

Finally,
\[
 \sum_{j,k=1}^n
 \left\langle \partial_k(\YY e_k e_j\YY)\partial_j U,U\right\rangle_0
 =(n-2)\sum_{j=1}^n
  \left\langle
    \frac{x e_j-e_j x}{r^2}\partial_j U,U
  \right\rangle_0,
\]
and another integration by parts gives
\begin{equation}\label{eq:radial-step3}
 \sum_{j,k=1}^n
 \left\langle \partial_k(\YY e_k e_j\YY)\partial_j U,U\right\rangle_0
 = -2(n-2)\left\langle DU,\frac{x}{r^2}U\right\rangle_0
  - (n-2)^2\int_\Omega \frac{|u|^2}{r^2}e^{-\varphi}\,dV.
\end{equation}
Substituting \eqref{eq:radial-step2} and \eqref{eq:radial-step3} into \eqref{eq:radial-step1}
yields the desired identity.
\end{proof}

\begin{proposition}[Single quadratic weight estimate]
\label{prop:single-quadratic}
Let \(\varphi = x_{1}^{2}\) and let \(\Omega\subset\mathbb{R}^{n}\) be a domain.
For any \(u \in C_c^{\infty}(\Omega, \mathbb{R}_{n})\), one has
\[
\|D_{\varphi}^{*}u\|_{\varphi}^{2}
 = \| D(e_{1} u) \|_{\varphi}^{2} + 2 \|u\|_{\varphi}^{2}
 \ge 2 \|u\|_{\varphi}^{2}.
\]
\end{proposition}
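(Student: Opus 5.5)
We apply Proposition~\ref{prop:general-identity} with a constant Clifford vector field and a linear scalar multiplier, chosen so that every term except a constant potential drops out. Concretely, we take
\[
\YY = e_1,\qquad \eta = -x_1,
\]
and write $U:=ue^{-\varphi/2}$ as in that proposition. Then $Y_1=1$ and $Y_j=0$ for $j\ge2$.

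First we check which terms of the general identity vanish.
\begin{itemize}
\item Since $|\YY|\equiv1$, both the term $\sum_j\int(1-|\YY|^2)|\partial_jU|^2\,dV$ and the term $\frac12\Delta|\YY|^2$ vanish.
\item Since $\YY$ is constant, $\partial_k(\YY e_ke_j\YY)=0$, so the last double sum vanishes.
\item Since $D\varphi=2x_1e_1$, we get $D\varphi+2\eta\YY=2x_1e_1-2x_1e_1=0$. Hence the cross term $\langle DU,(D\varphi+2\eta\YY)U\rangle_0$ disappears.
\end{itemize}
For the potential we have $\frac14|D\varphi|^2-\eta^2=x_1^2-x_1^2=0$ and $-2\sum_j\partial_j(\eta Y_j)=-2\partial_1(-x_1)=2$. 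The identity therefore reduces to
\[
\|D_\varphi^*u\|_\varphi^2=\Bigl\|-x_1U+\sum_{j=1}^n e_je_1\partial_jU\Bigr\|_0^2+2\int_\Omega|u|^2e^{-\varphi}\,dV .
\]

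Next we identify the twisted norm with $\|D(e_1u)\|_\varphi^2$, in the same way as \eqref{eq:radial-step2} in the radial case. Write $e_1u=e_1Ue^{\varphi/2}$ and use the Leibniz rule. Since $\partial_j e^{\varphi/2}=\frac12\partial_j\varphi\, e^{\varphi/2}$ is nonzero only for $j=1$, where it equals $x_1e^{\varphi/2}$, we obtain
\[
D(e_1u)=e^{\varphi/2}\Bigl(\sum_{j}e_je_1\partial_jU+x_1e_1e_1U\Bigr)=e^{\varphi/2}\Bigl(\sum_je_je_1\partial_jU-x_1U\Bigr).
\]
This uses $e_1^2=-1$ and the fact that $x_1$ is a real scalar. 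Consequently $\|D(e_1u)\|_\varphi^2=\int|\cdots|^2e^{\varphi}e^{-\varphi}\,dV$, which is exactly the twisted $\|\cdot\|_0^2$ norm above. This gives the stated identity, and the inequality $\|D_\varphi^*u\|_\varphi^2\ge 2\|u\|_\varphi^2$ follows by discarding the nonnegative first term.

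There is no serious obstacle. The only points requiring care are the sign bookkeeping in the non-commutative products: that $e_j e_1$ (and not $e_1e_j$) appears, matching $\sum_j e_j\YY\partial_jU$ in Proposition~\ref{prop:general-identity}, and that $e_1^2=-1$ produces the sign needed to match $\eta=-x_1$. As a cross-check, one could also expand $\|Du-2x_1e_1u\|_\varphi^2$ directly and integrate by parts. In that computation the constant $2$ arises from $\partial_1(x_1)=1$ in the cross term.
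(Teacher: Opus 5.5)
Your proposal is correct and matches the paper's proof essentially step by step. The paper also applies Proposition~\ref{prop:general-identity} with $\YY=e_1$ and $\eta=-x_1$, checks that the same terms vanish (leaving the constant potential $2$), and identifies the twisted norm with $\|D(e_1u)\|_\varphi^2$ through the same Leibniz computation, using $e_1e_1=-1$.
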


\begin{proof}
Write \(U:=u e^{-\varphi/2}\). Since \(D\varphi=2x_{1}e_{1}\), we apply
Proposition~\ref{prop:general-identity} with
\[
 \YY=e_{1},
 \qquad
 \eta=-x_{1}.
\]
Then \(|\YY|=1\), \(\Delta|\YY|^{2}=0\), and
\[
 D\varphi+2\eta\YY = 2x_{1}e_{1}-2x_{1}e_{1}=0.
\]
Moreover,
\[
 \frac14 |D\varphi|^{2}-\eta^{2}=x_{1}^{2}-x_{1}^{2}=0,
 \qquad
 -2\partial_{1}(\eta Y_{1}) = -2\partial_{1}(-x_{1})=2,
\]
while \(\partial_k(\YY e_k e_j \YY)=0\) because \(\YY=e_{1}\) is constant. Hence
Proposition~\ref{prop:general-identity} gives
\begin{equation}\label{eq:single-quadratic-step1}
 \|D_{\varphi}^{*}u\|_{\varphi}^{2}
 - \left\| -x_{1}U+\sum_{j=1}^{n} e_{j}e_{1}\partial_{j}U \right\|_{0}^{2}
 = 2\|u\|_{\varphi}^{2}.
\end{equation}

It remains to identify the twisted norm. Since \(u=Ue^{x_{1}^{2}/2}\),
\[
 D(e_{1}u)
 = \sum_{j=1}^{n} e_{j}\partial_{j}(e_{1}Ue^{x_{1}^{2}/2})
 = e^{x_{1}^{2}/2}
  \left(
   \sum_{j=1}^{n} e_{j}e_{1}\partial_{j}U
   + \frac12\sum_{j=1}^{n} e_{j}e_{1}U\,\partial_{j}\varphi
  \right).
\]
Because \(\partial_{j}\varphi=0\) for \(j\neq1\) and \(e_{1}e_{1}=-1\),
\[
 \frac12\sum_{j=1}^{n} e_{j}e_{1}U\,\partial_{j}\varphi
  = x_{1}e_{1}e_{1}U
  = -x_{1}U.
\]
Therefore
\[
 D(e_{1}u)
  = e^{\varphi/2}
   \left(
    -x_{1}U+\sum_{j=1}^{n} e_{j}e_{1}\partial_{j}U
   \right),
\]
and thus
\[
 \left\| -x_{1}U+\sum_{j=1}^{n} e_{j}e_{1}\partial_{j}U \right\|_{0}
 = \|D(e_{1}u)\|_{\varphi}.
\]
Substituting this into \eqref{eq:single-quadratic-step1} proves the claim.
\end{proof}

We next consider small anisotropic perturbations of the Gaussian weight.

\begin{proposition}[Perturbed Gaussian weights estimate]\label{prop:perturbed-gaussian}
Let $\varphi=\sum_{i=1}^{n}a_{i}x_{i}^{2}$ with $|a_{i}-1|<\varepsilon$ for $\varepsilon>0$
sufficiently small, and let
$\Omega\subset\mathbb{R}^{n}\setminus\overline{B(0,1)}$ be a domain. Then there exists
$\varepsilon_{0}=\varepsilon_{0}(n)>0$ such that, whenever \(0<\varepsilon<\varepsilon_{0}\),
\[
\|D_{\varphi}^{*}u\|_{\varphi}^{2} \ge 3\|u\|_{\varphi}^{2}
\qquad
\text{for all }u\in C_c^{\infty}(\Omega,\mathbb{R}_{n}).
\]
\end{proposition}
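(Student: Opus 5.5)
The plan is to redo the proof of Proposition~\ref{prop:radial-estimate} in the conjugated variable $U=ue^{-\varphi/2}$, replacing the radial unit field by one adapted to the anisotropic gradient. Write $\varphi_0=|x|^2$, $\psi=\varphi-\varphi_0=\sum_i(a_i-1)x_i^2$, and $r=|x|$. On $\Omega$ we have $r>1$, $|D\psi|\le 2\varepsilon r$ and $|D\varphi|\ge 2(1-\varepsilon)r>0$. In Proposition~\ref{prop:general-identity} I will take
\[
\YY=\frac{D\varphi}{|D\varphi|},\qquad \eta=-\frac12|D\varphi|+\frac{2-n}{r}.
\]
For $a_i\equiv1$ this is exactly the choice used for the Gaussian. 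It has two features that matter. First, $|\YY|\equiv1$, so the $(1-|\YY|^2)$ and $\Delta|\YY|^2$ terms vanish. Second, $\YY$ is exactly aligned with $D\varphi$, so $\frac14|D\varphi|^2-\eta^2=(n-2)|D\varphi|/r-(n-2)^2/r^2$ contains no term of size $r^2$. A naive perturbation of $D^*_{\varphi_0}$ by $(D\psi)u$ would fail precisely because $|D\psi|$ grows like $r$.

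Next I compare every term of the resulting identity with its Gaussian counterpart. Since $\YY=x/r+O(\varepsilon)$ with $\partial\YY=\partial(x/r)+O(\varepsilon/r)$, the zeroth-order coefficient becomes
\[
\frac14|D\varphi|^2-\eta^2-2\sum_j\partial_j(\eta Y_j)=\Bigl(4+\frac{(n-2)^2}{r^2}\Bigr)+O(\varepsilon).
\]
This uses $\Delta\varphi=2\sum a_i=2n+O(\varepsilon)$, $|D\varphi|/r=2+O(\varepsilon)$, and $\operatorname{div}(\YY/r)=(n-2)/r^2+O(\varepsilon/r^2)$. The cross term is $D\varphi+2\eta\YY=2(2-n)\YY/r$, which differs from the Gaussian value $2(2-n)x/r^2$ by $O(\varepsilon/r)$. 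Likewise $\partial_k(\YY e_ke_j\YY)$ differs from its Gaussian value by $O(\varepsilon/r)$. For the Gaussian parts I reuse the computation \eqref{eq:radial-step3} verbatim: it cancels the $(n-2)^2/r^2$ term and the Gaussian cross term, leaving the constant $4$. Dropping the nonnegative twisted norm, I arrive at
\[
\|D^*_\varphi u\|_\varphi^2\ \ge\ (4-C\varepsilon)\|U\|_0^2-C\varepsilon\,\Bigl\|\frac{\nabla U}{r}\Bigr\|_0\|U\|_0,
\]
with $C=C(n)$.

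The main obstacle is the first-order error, which must be absorbed without any a priori gradient control. The key point is that each error coefficient carries a factor $1/r$. Integrating by parts against the weight $r^{-2}$ and using $r>1$, I expect $\|\nabla U/r\|_0^2\le \|DU/r\|_0^2+C\|U\|_0^2$; the commutator terms with $r^{-2}$ are lower order and bounded. Then I use $DU=e^{-\varphi/2}D^*_\varphi u+\tfrac12 D\varphi\,U$ together with $|D\varphi|/r\le 2(1+\varepsilon)$, which gives $\|DU/r\|_0\le\|D^*_\varphi u\|_\varphi+C\|U\|_0$. Substituting, I get
\[
\|D^*_\varphi u\|_\varphi^2\ \ge\ (4-C\varepsilon)\|u\|_\varphi^2-C\varepsilon\|D^*_\varphi u\|_\varphi\|u\|_\varphi.
\]
Applying Young's inequality to the last term, $\|D^*_\varphi u\|_\varphi^2\ge(4-C'\varepsilon)\|u\|_\varphi^2$, which is at least $3\|u\|_\varphi^2$ once $\varepsilon<\varepsilon_0(n)$. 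The step I would check most carefully is the $O(\varepsilon/r)$ bound on $\partial_k(\YY e_ke_j\YY)-\partial_k\bigl(\tfrac{x}{r}e_ke_j\tfrac{x}{r}\bigr)$, uniformly on $r>1$. It should follow because $D\varphi/|D\varphi|$ is homogeneous of degree $0$ and depends smoothly on $(a_i)$ near $(1,\dots,1)$.
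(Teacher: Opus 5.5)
Your proposal is correct and is essentially the paper's argument. It uses the same multiplier $\YY=D\varphi/|D\varphi|$, with an $\eta$ that differs only in writing $(2-n)/r$ for the paper's $2(2-n)/|D\varphi|$. It then subtracts the Gaussian terms via \eqref{eq:radial-step3} and absorbs the $O(\varepsilon/r)$ first-order errors through a weighted Bochner identity (weight $r^{-2}$ instead of $|D\varphi|^{-2}$) combined with $DU=e^{-\varphi/2}D^*_\varphi u+\frac12 D\varphi\,U$.

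There are two harmless slips. First, $\frac14|D\varphi|^2-\eta^2=(2-n)|D\varphi|/r-(n-2)^2/r^2$, not $(n-2)|D\varphi|/r-(n-2)^2/r^2$; your final coefficient $4+(n-2)^2/r^2+O(\varepsilon)$ is nevertheless right. Second, the commutator in the weighted Bochner step contains the genuinely first-order term $2\int_\Omega r^{-4}\langle xDU,U\rangle\,dV$. So that step yields $\|\nabla U/r\|_0\le\|DU/r\|_0+C\|U\|_0$ rather than your squared form with constant $1$, and this weaker bound is all you actually use afterwards.
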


\begin{proof}
Set
\[
D\varphi = 2\sum_{i=1}^{n} a_i x_i e_i,
\qquad
|D\varphi|^2 = 4\sum_{i=1}^{n} a_i^2 x_i^2,
\qquad
\Delta\varphi = 2\sum_{i=1}^{n} a_i.
\]
Write $U:=u e^{-\varphi/2}$ and choose
\[
\YY=\frac{D\varphi}{|D\varphi|},
\qquad
\eta=-\frac12 |D\varphi|+\frac{2(2-n)}{|D\varphi|}.
\]
Since $|\YY|=1$, Proposition~\ref{prop:general-identity} gives
\begin{align}
&\|D_\varphi^*u\|_\varphi^2
 - \left\| \eta U+\sum_{j=1}^n e_j\YY\partial_j U \right\|_{0}^2 \nonumber\\
&=
\int_\Omega \left(\Delta\varphi+2(2-n)-\frac{4(2-n)^2}{|D\varphi|^2}
   -\sum_{j=1}^n\partial_j\left(\frac{8a_j(2-n)x_j}{|D\varphi|^2}\right)\right)
   |u|^2 e^{-\varphi}\,dV \nonumber\\
&\quad
 - \left\langle DU,\frac{4(2-n)D\varphi}{|D\varphi|^2}U \right\rangle_{0}
 + \sum_{j,k=1}^n
  \left\langle \partial_k\left(\frac{D\varphi\, e_k e_j\,D\varphi}{|D\varphi|^2}\right)
  \partial_j U,U \right\rangle_{0}.
\label{eq:perturbed-main0}
\end{align}
Indeed,
\[
 \eta Y_j = -a_jx_j+\frac{4(2-n)a_jx_j}{|D\varphi|^2},
\]
so
\[
 -2\sum_{j=1}^n\partial_j(\eta Y_j)
 = \Delta\varphi - \sum_{j=1}^n\partial_j\left(\frac{8(2-n)a_jx_j}{|D\varphi|^2}\right),
\]
and
\[
 \frac14|D\varphi|^2-\eta^2
 = 2(2-n)-\frac{4(2-n)^2}{|D\varphi|^2}.
\]

We now add and subtract the radial-Gaussian coefficients associated with $Y=x/|x|$.
Equation~\eqref{eq:radial-step3}, proved earlier for this choice of $Y$, is an identity in the
function $U$ alone and therefore applies here as well:
\begin{equation}\label{eq:radial-step3-perturbed}
 \sum_{j,k=1}^n
 \left\langle \partial_k\left(\frac{x\,e_ke_j\,x}{|x|^2}\right)\partial_j U,U\right\rangle_0
 = -2(n-2)\left\langle DU,\frac{x}{|x|^2}U\right\rangle_0
  - (n-2)^2\int_\Omega \frac{|u|^2}{|x|^2}e^{-\varphi}\,dV.
\end{equation}
Substituting this cancellation into \eqref{eq:perturbed-main0}, we may rewrite the identity as
\begin{equation}\label{eq:perturbed-main}
 \|D_\varphi^*u\|_\varphi^2
 - \left\| \eta U+\sum_{j=1}^n e_j\YY\partial_j U \right\|_{0}^2
 = 4\|u\|_\varphi^2 + A_1 + A_2 + A_3,
\end{equation}
where
\begin{align*}
A_1&=\int_\Omega E_\varepsilon(x)\,|u|^2 e^{-\varphi}\,dV,\\
E_\varepsilon(x)&:=2\sum_{i=1}^{n}(a_i-1)
  -\frac{4(2-n)^2}{|D\varphi|^2}
  -\sum_{j=1}^{n}\partial_j\left(\frac{8a_j(2-n)x_j}{|D\varphi|^2}\right)
  -\frac{(n-2)^2}{|x|^2},\\
A_2&=\left\langle DU,
   2(2-n)\left(\frac{x}{|x|^2}-\frac{\sum_i a_ix_i e_i}{\sum_j a_j^2x_j^2}\right)U
  \right\rangle_0,\\
A_3&=\sum_{j,k=1}^n
   \left\langle
    \partial_k\left(
     \frac{D\varphi\,e_ke_j\,D\varphi}{|D\varphi|^2}
     -\frac{x\,e_ke_j\,x}{|x|^2}
    \right)\partial_j U,
    U
   \right\rangle_0.
\end{align*}

For $a_i\equiv1$ one has $E_\varepsilon\equiv0$. More generally, if $|a_i-1|<\varepsilon$ and
$|x|\ge1$, then $|D\varphi|^2=4|x|^2+O_n(\varepsilon |x|^2)$, hence
\[
 \frac{1}{|D\varphi|^2}=\frac{1}{4|x|^2}+O_n\!\left(\frac{\varepsilon}{|x|^2}\right),
 \qquad
 \partial_j\left(\frac{a_jx_j}{|D\varphi|^2}\right)
  = \frac14\,\partial_j\left(\frac{x_j}{|x|^2}\right)+O_n\!\left(\frac{\varepsilon}{|x|^2}\right).
\]
It follows that $E_\varepsilon(x)=O_n(\varepsilon)$ uniformly on $\Omega$, so there exists a constant
$C_n>0$, depending only on $n$, such that
\begin{equation}\label{eq:A1-bound}
 |A_1|\le C_n\varepsilon \, \|u\|_\varphi^2.
\end{equation}
Likewise, the coefficient differences in $A_2$ and $A_3$ are $O_n(\varepsilon |x|^{-1})$, and therefore
\begin{equation}\label{eq:A23-rough}
 |A_2+A_3|
 \le C_n\varepsilon
   \left(
    \|u\|_\varphi^2
    + \sum_{j=1}^n\int_\Omega \frac{|\partial_j U|^2}{|D\varphi|^2}\,dV
   \right).
\end{equation}

To control the last term we set $\psi:=2\log|D\varphi|$. Then $e^{-\psi}=|D\varphi|^{-2}$, and
\eqref{eq:basic-Bochner} applied with the weight $\psi$ yields
\begin{align}
 \sum_{j=1}^n\int_\Omega\frac{|\partial_j U|^2}{|D\varphi|^2}\,dV
 &= \frac12\left(\|\delta_\psi U\|_\psi^2+\|DU\|_\psi^2
  -\int_\Omega \Delta\psi\,|U|^2e^{-\psi}\,dV\right) \nonumber\\
 &= \frac12\left(\|DU-D\psi\cdot U\|_\psi^2+\|DU\|_\psi^2
  -\int_\Omega \Delta\psi\,|U|^2e^{-\psi}\,dV\right). \label{eq:psi-bochner}
\end{align}
Now $|D\varphi|^2 \simeq |x|^2$ uniformly for $\varepsilon$ small, while
$|D\psi|\lesssim_n |x|^{-1}$ and $|\Delta\psi|\lesssim_n |x|^{-2}$. Since $|x|\ge1$,
these imply
\[
 \|D\psi\cdot U\|_\psi^2
 + \left|\int_\Omega \Delta\psi\,|U|^2e^{-\psi}\,dV\right|
 \le C_n \|U\|_0^2.
\]
Using $\|a-b\|^2\le 2\|a\|^2+2\|b\|^2$ in \eqref{eq:psi-bochner}, we obtain
\begin{equation}\label{eq:psi-bound}
 \sum_{j=1}^n\int_\Omega\frac{|\partial_j U|^2}{|D\varphi|^2}\,dV
 \le \frac32\int_\Omega \frac{|DU|^2}{|D\varphi|^2}\,dV + C_n\|U\|_0^2.
\end{equation}

It remains to estimate $\int |DU|^2/|D\varphi|^2$. From
\[
 \|D_\varphi^*u\|_\varphi^2+\|u\|_\varphi^2
 = \left\|DU-\frac12 D\varphi\cdot U\right\|_0^2+\|U\|_0^2
\]
and completion of the square, we obtain
\begin{equation}\label{eq:DU-over-Dphi}
 \|D_\varphi^*u\|_\varphi^2+\|u\|_\varphi^2
 \ge \int_\Omega \frac{4|DU|^2}{4+|D\varphi|^2}\,dV.
\end{equation}
If $\varepsilon$ is sufficiently small, then $|D\varphi|^2\ge 2$ on $\Omega$, and therefore
\[
 \frac{4}{4+|D\varphi|^2}\ge \frac{1}{|D\varphi|^2}.
\]
Combining this with \eqref{eq:DU-over-Dphi} gives
\[
 \int_\Omega \frac{|DU|^2}{|D\varphi|^2}\,dV
 \le \|D_\varphi^*u\|_\varphi^2+\|u\|_\varphi^2.
\]
Substituting into \eqref{eq:psi-bound} and then into \eqref{eq:A23-rough}, we obtain
\begin{equation}\label{eq:A23-bound}
 |A_2+A_3|
 \le \frac32 C_n\varepsilon\,\|D_\varphi^*u\|_\varphi^2
   + \frac52 C_n\varepsilon\,\|u\|_\varphi^2.
\end{equation}

Finally, \eqref{eq:perturbed-main}, \eqref{eq:A1-bound}, and \eqref{eq:A23-bound} imply
\[
 \left(1-\frac32 C_n\varepsilon\right)\|D_\varphi^*u\|_\varphi^2
 \ge \left(4-\frac72 C_n\varepsilon\right)\|u\|_\varphi^2.
\]
Choosing $\varepsilon_0>0$ so that
\[
 1-\frac32 C_n\varepsilon_0 >0,
 \qquad
 \frac{4-\frac72 C_n\varepsilon_0}{1-\frac32 C_n\varepsilon_0}\ge 3,
\]
we conclude that $\|D_\varphi^*u\|_\varphi^2 \ge 3\|u\|_\varphi^2$.
\end{proof}

The same identity also yields the following abstract consequence involving \(\Delta\varphi\).

\begin{proposition}\label{prop:general-application}
Let \(\Omega \subset \mathbb{R}^{n}\) be a domain and let
\(\varphi \in C^{2}(\Omega, \mathbb{R})\) be subharmonic with \(|D\varphi| > 0\).
For \(u \in C_c^{\infty}(\Omega, \mathbb{R}_{n})\), write \(U:=u e^{-\varphi/2}\).
Then
\begin{align*}
\|D_{\varphi}^{*}u\|_{\varphi}^{2}
&- \left\| \sum_{j=1}^{n} e_{j} \frac{D\varphi}{|D\varphi|} \partial_{j} u \right\|_{\varphi}^{2} \\
&= \int_{\Omega} \Delta\varphi |u|^{2} e^{-\varphi}\,dV
  + \sum_{j,k=1}^{n}
   \left\langle
     \partial_{k}\left( \frac{D\varphi\, e_{k} e_{j}\, D\varphi}{|D\varphi|^{2}} \right)
     \partial_{j} U, U
   \right\rangle_{0}.
\end{align*}
In particular, if there exist constants \(k>0\) and \(\varepsilon\in(0,1)\) such that
\begin{align*}
k \|D_{\varphi}^{*}u\|_{\varphi}^{2}
&+ (1 - \varepsilon) \int_{\Omega} \Delta\varphi |u|^{2} e^{-\varphi}\,dV \\
&+ \sum_{j,k=1}^{n}
   \left\langle
     \partial_{k}\left( \frac{D\varphi\, e_{k} e_{j}\, D\varphi}{|D\varphi|^{2}} \right)
     \partial_{j} U, U
   \right\rangle_{0}
 \ge 0.
\end{align*}
for all such \(u\), then
\[
\|D_{\varphi}^{*}u\|_{\varphi}^{2}
 \ge \frac{\varepsilon}{1+k}
  \int_{\Omega} \Delta\varphi |u|^{2} e^{-\varphi}\,dV.
\]
\end{proposition}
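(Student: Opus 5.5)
We will obtain the identity by specializing Proposition~\ref{prop:general-identity} to the same choice as in the two-dimensional remark following it:
\[
 \YY=\frac{D\varphi}{|D\varphi|},\qquad \eta=-\frac12|D\varphi|.
\]
The hypothesis $|D\varphi|>0$ makes these well defined. Since $\YY$ is a unit vector field, $|\YY|\equiv1$. Hence $\Delta|\YY|^2=0$, and the term $\sum_j\int(1-|\YY|^2)|\partial_jU|^2$ vanishes. We also have $\frac14|D\varphi|^2-\eta^2=0$ and $D\varphi+2\eta\YY=D\varphi-D\varphi=0$, so the cross term $\langle DU,(D\varphi+2\eta\YY)U\rangle_0$ drops out. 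Finally $\eta Y_j=-\frac12\partial_j\varphi$, so $-2\sum_j\partial_j(\eta Y_j)=\Delta\varphi$. The zeroth-order coefficient is therefore exactly $\Delta\varphi$, and the only surviving extra term is the commutator sum $\sum_{j,k}\langle\partial_k(\YY e_ke_j\YY)\partial_jU,U\rangle_0$. In that sum, $\YY e_ke_j\YY=D\varphi\,e_ke_j\,D\varphi/|D\varphi|^2$, as in the statement.

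It then remains to identify the twisted norm. From $u=Ue^{\varphi/2}$ we get $\partial_ju=e^{\varphi/2}(\partial_jU+\tfrac12\partial_j\varphi\,U)$. Because $\partial_j\varphi$ is scalar,
\[
 \sum_j e_j\YY\partial_ju=e^{\varphi/2}\Bigl(\sum_je_j\YY\partial_jU+\tfrac12 D\varphi\,\YY\,U\Bigr).
\]
Now $D\varphi\,\YY=(D\varphi)^2/|D\varphi|=-|D\varphi|$, since the square of a vector is minus its squared norm. Hence $\tfrac12D\varphi\,\YY U=\eta U$, and therefore $\|\eta U+\sum_je_j\YY\partial_jU\|_0=\|\sum_je_j\YY\partial_ju\|_\varphi$. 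This gives the displayed identity.

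One technical point needs care: Proposition~\ref{prop:general-identity} was stated for smooth $\varphi,\eta,Y_j$, while here $\varphi\in C^2$, so $\YY$ and $\eta$ are only $C^1$. I would check that the proof of Proposition~\ref{prop:general-identity} only uses one derivative of $\eta Y_j$ and of $\YY e_ke_j\YY$. It also uses $\Delta|\YY|^2$, which here is identically zero. So the integrations by parts remain valid on the compact support of $u$. Alternatively, one can approximate $\varphi$ locally in $C^2$ on $\operatorname{supp}u$ and pass to the limit. I expect this regularity bookkeeping to be the only delicate step.

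For the consequence, denote by $T$ the commutator sum. The identity gives
\[
 \|D_\varphi^*u\|_\varphi^2\ge\int_\Omega\Delta\varphi|u|^2e^{-\varphi}\,dV+T,
\]
after discarding the nonnegative twisted norm. The hypothesis says $T\ge -k\|D_\varphi^*u\|_\varphi^2-(1-\varepsilon)\int\Delta\varphi|u|^2e^{-\varphi}$. Substituting, we get $(1+k)\|D_\varphi^*u\|_\varphi^2\ge\varepsilon\int\Delta\varphi|u|^2e^{-\varphi}\,dV$, and dividing by $1+k>0$ gives the claim. Subharmonicity is used only so that the right-hand side is a meaningful nonnegative weight, for example when it is later fed into Lemma~\ref{lem:L2 existence}.
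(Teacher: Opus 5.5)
Your proposal is correct and follows the paper's proof essentially line by line. You use the same specialization $\YY=D\varphi/|D\varphi|$, $\eta=-\frac12|D\varphi|$ of Proposition~\ref{prop:general-identity}, the same identification of the twisted norm via $(D\varphi)^2=-|D\varphi|^2$, and the same absorption of the commutator term to reach $(1+k)\|D_\varphi^*u\|_\varphi^2\ge\varepsilon\int_\Omega\Delta\varphi\,|u|^2e^{-\varphi}\,dV$. Your regularity check is also correct, and the paper passes over it silently: the general identity only needs one derivative of $\eta Y_j$ and of $\YY e_ke_j\YY$, plus $U\in C^2_c$, and all of this is available for $\varphi\in C^2$ with $|D\varphi|>0$.
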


\begin{proof}
We apply Proposition~\ref{prop:general-identity} with
\[
 \YY = \frac{D\varphi}{|D\varphi|},
 \qquad
 \eta = -\frac12 |D\varphi|.
\]
Then \(|\YY|=1\), \(\frac14|D\varphi|^2-\eta^2=0\), and
\[
 -2\sum_{j=1}^n \partial_j(\eta Y_j)
 = -2\sum_{j=1}^n \partial_j\!\left(-\frac12\,\partial_j\varphi\right)
 = \Delta\varphi,
\]
while \(D\varphi+2\eta\YY=0\). Therefore Proposition~\ref{prop:general-identity} yields
\begin{align*}
\|D_{\varphi}^{*}u\|_{\varphi}^{2}
&- \left\| \eta U + \sum_{j=1}^{n} e_{j}\YY \partial_{j} U \right\|_{0}^{2} \\
&= \int_{\Omega} \Delta\varphi |u|^{2} e^{-\varphi}\,dV
  + \sum_{j,k=1}^{n}
   \left\langle
     \partial_{k}\left( \frac{D\varphi\, e_{k} e_{j}\, D\varphi}{|D\varphi|^{2}} \right)
     \partial_{j} U, U
   \right\rangle_{0}.
\end{align*}
Finally,
\[
 \sum_{j=1}^n e_j\frac{D\varphi}{|D\varphi|}\partial_j u
  = e^{\varphi/2}
   \left(
    -\frac12 |D\varphi|\,U
    + \sum_{j=1}^n e_j\frac{D\varphi}{|D\varphi|}\partial_j U
   \right),
\]
because
\[
 \frac12\sum_{j=1}^n e_j\frac{D\varphi}{|D\varphi|}U\,\partial_j\varphi
  = \frac12\frac{(D\varphi)^2}{|D\varphi|}\,U
  = -\frac12 |D\varphi|\,U.
\]
Hence
\[
 \left\| \eta U+\sum_{j=1}^{n} e_{j}\YY \partial_{j} U \right\|_{0}
 = \left\| \sum_{j=1}^{n} e_{j} \frac{D\varphi}{|D\varphi|} \partial_{j} u \right\|_{\varphi},
\]
which proves the identity.

For the implication, move the last summation term to the left-hand side and combine the
assumed inequality with the displayed identity. Since the twisted norm is nonnegative, we obtain
\[
 (1+k)\|D_\varphi^*u\|_\varphi^2
 \ge \varepsilon \int_\Omega \Delta\varphi |u|^2 e^{-\varphi}\,dV,
\]
which is the stated estimate.
\end{proof}

The weight \(\varphi=x_1^2\) is \emph{not} covered directly by Proposition~\ref{prop:general-application}, because \(|D\varphi|=2|x_1|\) vanishes on the hyperplane \(\{x_1=0\}\). That case is handled separately in Proposition~\ref{prop:single-quadratic}. Likewise, the Gaussian weight \(\varphi=|x|^2\) on all of \(\mathbb R^n\) is treated via Proposition~\ref{prop:radial-estimate} on punctured domains, since the choice \(\YY=x/|x|\) is singular at the origin.

The estimates obtained in this section will be combined with Lemma~\ref{lem:L2 existence} in Section~\ref{section:7} to derive the corresponding existence theorems.

\section{Weighted \(L^2\) existence theorems for the Dirac equation}\label{section:7}

In this section we combine the coercive estimates of Section~\ref{section:6} with Lemma~\ref{lem:L2 existence} to obtain the corresponding weighted existence theorems for the Dirac equation.

\begin{theorem}[Weighted \(L^2\) existence theorems for the Dirac equation]
\label{thm:L2-existence}
The following existence results hold.
\begin{enumerate}
\item \textbf{Radial weights.} Let $m \in \mathbb{R}\setminus\{0\}$, let $\varphi = |x|^m$,
and let $\Omega \subset \mathbb{R}^{n} \setminus \{0\}$ be a domain.
For every $f \in L_{\varphi}^{2}(\Omega, \mathbb{R}_{n})$ satisfying
\[
\int_{\Omega} \frac{|f|^{2}}{|x|^{m-2}} e^{-\varphi}\,dV < \infty,
\]
there exists a solution $u \in L_{\varphi}^{2}(\Omega, \mathbb{R}_{n})$ to \(Du=f\) such that
\[
\|u\|_{\varphi}^{2}
 \le \int_{\Omega} \frac{|f|^{2}}{m^{2}|x|^{m-2}} e^{-\varphi}\,dV.
\]
In particular, for the Gaussian weight \(\varphi=|x|^2\), every
\(f \in L_{\varphi}^{2}(\Omega,\mathbb{R}_{n})\) admits a solution \(u\) with
\[
\|u\|_{\varphi}^{2} \le \frac14 \|f\|_{\varphi}^{2}.
\]
If \(\Omega=\mathbb{R}^{n}\setminus\{0\}\), then the constant \(1/4\) is sharp.

\item \textbf{Single quadratic weight.} Let \(\varphi=x_1^2\) and let
\(\Omega\subset\mathbb{R}^{n}\) be a domain. For every
\(f \in L_{\varphi}^{2}(\Omega, \mathbb{R}_{n})\), there exists a solution
\(u \in L_{\varphi}^{2}(\Omega, \mathbb{R}_{n})\) to \(Du=f\) such that
\[
\|u\|_{\varphi}^{2} \le \frac12 \|f\|_{\varphi}^{2}.
\]

\item \textbf{Perturbed Gaussian weights.} Let
\(\varphi = \sum_{i=1}^{n} a_i x_i^2\) with \(|a_i-1|<\varepsilon\) for some sufficiently small
\(\varepsilon>0\), and let \(\Omega\) be a domain exterior to \(B(0,1)\).
For every \(f \in L_{\varphi}^{2}(\Omega, \mathbb{R}_{n})\), there exists a solution \(u\) to
\(Du=f\) such that
\[
\|u\|_{\varphi}^{2} \le \frac13 \|f\|_{\varphi}^{2}.
\]
\end{enumerate}
\end{theorem}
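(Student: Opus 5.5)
The plan is to feed each coercive estimate from Section~\ref{section:6} into Lemma~\ref{lem:L2 existence}. That lemma only needs an inequality $\|D^*_\varphi u\|^2_\varphi\ge\int_\Omega A|u|^2e^{-\varphi}\,dV$ for $u\in C_c^\infty(\Omega,\mathbb R_n)$. One point must be settled first. The Section~\ref{section:6} estimates are stated for the formal adjoint $\delta_\varphi$, whereas the lemma is phrased with $D^*_\varphi$. On $C_c^\infty$ these agree: for $u\in C_c^\infty$ and $v\in\operatorname{Dom}(D)$, the distributional definition of $Dv$ (tested against $ue^{-\varphi}$) gives $\langle Dv,u\rangle_\varphi=\langle v,\delta_\varphi u\rangle_\varphi$. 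Hence $u\in\operatorname{Dom}(D^*_\varphi)$ and $D^*_\varphi u=\delta_\varphi u$. I would record this once at the start.

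For part (1), Proposition~\ref{prop:radial-estimate} discards the nonnegative twisted norm and gives $\|D^*_\varphi u\|^2_\varphi\ge\int_\Omega m^2|x|^{m-2}|u|^2e^{-\varphi}\,dV$. I apply Lemma~\ref{lem:L2 existence} with $A=m^2|x|^{m-2}$, which is positive on $\Omega\subset\mathbb R^n\setminus\{0\}$, to get the stated bound. For $m=2$ this becomes $A\equiv4$, so every $f\in L^2_\varphi$ is admissible and $\|u\|^2_\varphi\le\frac14\|f\|^2_\varphi$. Part (2) follows in the same way from Proposition~\ref{prop:single-quadratic} with $A\equiv2$. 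Part (3) follows from Proposition~\ref{prop:perturbed-gaussian} with $A\equiv3$, taking ``sufficiently small'' to mean $\varepsilon<\varepsilon_0(n)$.

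The sharpness of $1/4$ on $\Omega=\mathbb R^n\setminus\{0\}$ is the only nonformal step. I need $f$ whose minimal-norm solution $u$ satisfies $\|u\|^2_\varphi/\|f\|^2_\varphi$ equal to, or approaching, $1/4$. The identity in Proposition~\ref{prop:radial-estimate} suggests looking for near-equality, where $rD(xu/r^2)$ is small. My first choice is $u=e_1$, or more generally a constant $c$, so that $f=Du=0$; that is useless, so I work in the dual formulation instead. Sharpness of the solution constant is equivalent to sharpness of the coercive constant restricted to the closure of the range. I would therefore take $w$ with $rD(xw/r^2)\approx0$ and $\|D^*_\varphi w\|^2_\varphi\approx4\|w\|^2_\varphi$, and set $f=D^*_\varphi w$.

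A natural candidate is $w=x/r^{n}$ times a radial cutoff: the $x/r^2$-twisted expression $D(x\cdot x/r^{n+2})=-D(r^{-n})\ne0$. Alternatively I would take $w=x\,g(r)$ with $g$ chosen so that $xw/r^2=-g$ becomes a monogenic-like radial function. The cleanest route is to compute directly on radial vector fields $w=\chi(r)x/r$, using $D^*_\varphi w=Dw-2xw$. Then $xw=-r\chi$ is scalar, and $D(\chi x/r)$ is an explicit scalar ODE expression. This reduces the problem to a one-dimensional weighted Hardy-type quotient in $r$.

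I then pick $\chi$ spread over large scales, for example $\chi=r^{-n/2}\cdot\rho(\log r/R)$ with $R\to\infty$, and check that the quotient tends to $4$. This gives $u=D^*_\varphi w$-dual data with ratio tending to $1/4$. Concretely, take $f=D^*_\varphi w$. The solution produced by Lemma~\ref{lem:functional-analysis} is minimal, since it lies in the closure of the range of $D^*_\varphi$, so it equals $w$ up to a kernel component, and $w\perp\ker D$ automatically. This gives the ratio $\|w\|^2/\|D^*_\varphi w\|^2\to1/4$. Checking that this one-dimensional quotient really approaches $4$, and justifying the minimality and density details, is the step I expect to be the main obstacle.
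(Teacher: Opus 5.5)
Your handling of the existence statements in (1)--(3) is the same as the paper's. You feed Propositions~\ref{prop:radial-estimate}, \ref{prop:single-quadratic} and \ref{prop:perturbed-gaussian} into Lemma~\ref{lem:L2 existence} with $A=m^2|x|^{m-2}$, $2$ and $3$ respectively. Your remark that $D^*_\varphi u=\delta_\varphi u$ for $u\in C_c^\infty$ is correct.

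The gap is in the sharpness of $1/4$. Your reduction to $w=\chi(r)x/r$ is sound. With $\psi:=\chi/r$ one has $xw/r^2=-\psi$, and the identity of Proposition~\ref{prop:radial-estimate} becomes
\[
\frac{\|D^*_\varphi w\|_\varphi^2}{\|w\|_\varphi^2}
=4+\frac{\int_0^\infty (\psi')^2\,r^{n+1}e^{-r^2}\,dr}{\int_0^\infty \psi^2\,r^{n+1}e^{-r^2}\,dr}.
\]
Your test family $\chi=r^{-n/2}\rho(\log r/R)$ is modeled on near-extremizers for scale-invariant Hardy inequalities. That model does not apply here: the Gaussian factor breaks scale invariance and the measure $r^{n+1}e^{-r^2}\,dr$ is finite. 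For your family the second quotient does not tend to $0$.
\begin{itemize}
\item If $\rho\equiv1$ on $[-1,1]$, then $\psi=r^{-n/2-1}$ on $e^{-R}\le r\le 1$. The numerator is then of order $e^{2R}$ while the denominator is of order $R$.
\item Pushing the support to $r\ge r_0\gg1$ does not help. The ground-state substitution $\psi=e^{V/2}v$ with $V=r^2-(n+1)\log r$ shows that the quotient is then at least $r_0^2-n-2$.
\end{itemize}

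The missing observation is that the quotient vanishes exactly when $\psi$ is constant, i.e.\ for $w=x$.
\begin{itemize}
\item Then $xw/r^2=-1$ is monogenic. The only radial monogenic functions are constants, which is where your idea of making $xw/r^2$ ``monogenic-like'' should have led.
\item One computes $D^*_\varphi x=2|x|^2-n$ and $\|2|x|^2-n\|_\varphi^2=2\sigma_{n-1}\Gamma(\tfrac n2+1)=4\|x\|_\varphi^2$.
\item The paper approximates this exact extremal by $w_m=\chi_m x$, with cutoffs satisfying $|x|\,|\nabla\chi_m|\le C_0$. Then $(D\chi_m)x$ is uniformly bounded and supported in $\{|x|<2/m\}\cup\{|x|>m\}$. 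Hence $w_m\to x$ and $D^*_\varphi w_m\to 2|x|^2-n$ in $L^2_\varphi$.
\end{itemize}

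Your last step is also wrong as written. A solution of $Du=D^*_\varphi w$ is not $w$ plus an element of $\ker D$; that would require $Dw=D^*_\varphi w$, i.e.\ $(D\varphi)w=0$. In fact no minimal-solution argument is needed. As in the paper, suppose every $f$ admits $g_f$ with $Dg_f=f$ and $\|g_f\|_\varphi^2\le C\|f\|_\varphi^2$. Then $\langle w,f\rangle_\varphi=\langle D^*_\varphi w,g_f\rangle_\varphi$ for all $w\in C_c^\infty(\Omega,\mathbb R_n)$, so $\|w\|_\varphi^2\le C\|D^*_\varphi w\|_\varphi^2$. Applying this to $w_m$ and letting $m\to\infty$ gives $C\ge 1/4$.

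If you prefer your primal formulation, the extremal pair is $f=4x$, $u=2|x|^2-n$. We have $u=\lim_m D^*_\varphi w_m$ in $L^2_\varphi$, and $\langle D^*_\varphi w_m,h\rangle_\varphi=\langle w_m,Dh\rangle_\varphi=0$ for every $h\in\ker D$. So $u\perp\ker D$, $u$ is the minimal solution of $Du=f$, and $\|u\|_\varphi^2=\tfrac14\|f\|_\varphi^2$ exactly.
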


\begin{proof}
Each statement follows from Lemma~\ref{lem:L2 existence} once the corresponding coercive estimate
for \(D_\varphi^*\) has been established.

\begin{enumerate}
\item Proposition~\ref{prop:radial-estimate} yields
\[
\|D_{\varphi}^{*}u\|_{\varphi}^{2}
 \ge m^2 \int_{\Omega}|x|^{m-2}|u|^2e^{-\varphi}\,dV
 \qquad
 \text{for all }u\in C_c^\infty(\Omega,\mathbb{R}_n),
\]
and Lemma~\ref{lem:L2 existence} gives the existence statement and estimate.

To prove sharpness in the Gaussian case on \(\Omega=\mathbb{R}^{n}\setminus\{0\}\),
argue by contradiction. Assume that there exists a constant $C<1/4$ such that for every
$f\in L_\varphi^2(\Omega,\mathbb R_n)$ there exists $g\in L_\varphi^2(\Omega,\mathbb R_n)$ with
$Dg=f$ and
\[
 \|g\|_\varphi^2 \le C\|f\|_\varphi^2.
\]
Let $w\in C_c^\infty(\Omega,\mathbb R_n)$. For each $f$, choose such a solution $g_f$. Since
$Dg_f=f$ in the sense of distributions and $w$ has compact support, we have
\[
 \langle w,f\rangle_\varphi
 = \langle we^{-\varphi},f\rangle_0
 = \langle we^{-\varphi},Dg_f\rangle_0
 = \langle D(we^{-\varphi}),g_f\rangle_0
 = \langle e^{\varphi}D(we^{-\varphi}),g_f\rangle_\varphi
 = \langle D_\varphi^*w,g_f\rangle_\varphi.
\]
Therefore,
\[
 |\langle w,f\rangle_\varphi|
 \le \|D_\varphi^*w\|_\varphi\,\|g_f\|_\varphi
 \le \sqrt{C}\,\|D_\varphi^*w\|_\varphi\,\|f\|_\varphi.
\]
Taking the supremum over all $f\neq0$ yields
\[
 \|w\|_\varphi^2 \le C\|D_\varphi^*w\|_\varphi^2
 \qquad \text{for all } w\in C_c^\infty(\Omega,\mathbb R_n).
\]

Now set
\[
 x := \sum_{j=1}^n x_j e_j,
 \qquad
 u_0 := 2|x|^2-n.
\]
Choose $\chi_m\in C_c^\infty(\Omega)$ such that $0\le \chi_m\le 1$,
$\chi_m\equiv1$ on $\{2/m\le |x|\le m\}$,
$\operatorname{supp}\chi_m\subset\{1/m\le |x|\le 2m\}$, and
$|x|\,|\nabla\chi_m|\le C_0$ on $\Omega$, where $C_0$ is independent of $m$.
Set $w_m:=\chi_m x$. Then $w_m\in C_c^\infty(\Omega,\mathbb R_n)$,
$w_m\to x$ in $L_\varphi^2$, and, for $\varphi=|x|^2$,
\[
 D_\varphi^* w_m
 = D(\chi_m x) - 2x\,\chi_m x
 = \chi_m(2|x|^2-n) + (D\chi_m)x
 = \chi_m u_0 + (D\chi_m)x.
\]
Because $D\chi_m$ is a vector field, Lemma~\ref{lem:Clifford-norm} gives
\[
 |(D\chi_m)x| = |D\chi_m|\,|x| = |\nabla\chi_m|\,|x| \le C_0.
\]
Moreover, $(D\chi_m)x$ is supported in
$\{ |x|<2/m\}\cup\{ |x|>m\}$, so dominated convergence implies
$(D\chi_m)x\to0$ in $L_\varphi^2$. Hence $D_\varphi^*w_m\to u_0$ in $L_\varphi^2$.
Passing to the limit in the previous coercive inequality, we obtain
\[
 \|u_0\|_\varphi^2 \ge C^{-1}\|x\|_\varphi^2.
\]
On the other hand,
\[
 \|x\|_\varphi^2
 = \sigma_{n-1}\int_0^\infty r^{n+1}e^{-r^2}\,dr
 = \frac12\sigma_{n-1}\Gamma\left(\frac n2+1\right),
\]
while
\begin{align*}
\|u_0\|_\varphi^2
 &= \sigma_{n-1}\int_0^\infty (2r^2-n)^2 r^{n-1}e^{-r^2}\,dr \\
 &= 2\sigma_{n-1}\Gamma\left(\frac n2+1\right)
 = 4\|x\|_\varphi^2.
\end{align*}
Therefore $4\|x\|_\varphi^2 = \|u_0\|_\varphi^2 \ge C^{-1}\|x\|_\varphi^2$, so $C\ge1/4$, which is a
contradiction. Thus the constant $1/4$ is sharp.

\item Proposition~\ref{prop:single-quadratic} gives
\[
\|D_{\varphi}^{*}u\|_{\varphi}^{2}
 \ge 2 \|u\|_{\varphi}^{2},
 \qquad u\in C_c^\infty(\Omega,\mathbb{R}_n),
\]
and Lemma~\ref{lem:L2 existence} yields the stated conclusion.

\item Proposition~\ref{prop:perturbed-gaussian} gives
\[
\|D_{\varphi}^{*}u\|_{\varphi}^{2}
 \ge 3 \|u\|_{\varphi}^{2},
 \qquad u\in C_c^\infty(\Omega,\mathbb{R}_n),
\]
and Lemma~\ref{lem:L2 existence} again applies.
\end{enumerate}
This completes the proof.
\end{proof}

By the factorization $\Delta=-D^{2}$, we obtain the following weighted solvability result for the Poisson equation.

\begin{corollary}[$L^2$-Existence for the Laplace Equation] \label{cor:laplace-existence}
Let $\varphi = |x|^2$ and $\Omega \subset \mathbb{R}^{n} \setminus \{0\}$ be a domain. For every $f \in L_{\varphi}^{2}(\Omega, \mathbb{R}_{n})$, there exists a solution $u \in L_{\varphi}^{2}(\Omega, \mathbb{R}_{n})$ to the Poisson equation
\[
\Delta u = f
\]
satisfying the estimate
\[
\|u\|_{\varphi}^{2} \leq \frac{1}{16} \|f\|_{\varphi}^{2}.
\]
Furthermore, if $f$ is real-valued, then $u$ can be chosen to be real-valued with the same estimate.
\end{corollary}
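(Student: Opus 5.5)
The plan is to apply the Gaussian case of Theorem~\ref{thm:L2-existence}(1) twice and use the factorization $\Delta=-D^{2}$.

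\textbf{Two Dirac solves.} Given $f\in L^2_\varphi(\Omega,\mathbb R_n)$ with $\varphi=|x|^2$, first solve $Dv=-f$ with
\[
\|v\|_\varphi^2\le\tfrac14\|f\|_\varphi^2 .
\]
Since $v\in L^2_\varphi(\Omega,\mathbb R_n)$, we may apply the same theorem again and solve $Du=v$ with
\[
\|u\|_\varphi^2\le\tfrac14\|v\|_\varphi^2\le\tfrac1{16}\|f\|_\varphi^2 .
\]

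\textbf{Distributional verification.} Next I will check that $\Delta u=-D(Du)=-Dv=f$ in the sense of distributions. Because $D$ has constant coefficients, its distributional action on $L^2_{\mathrm{loc}}$ functions composes correctly. Concretely, for a test function $\phi$ the definition in Definition~\ref{def:maximal-Dirac} gives
\[
\langle u,D D\phi\rangle_0=\langle Du,D\phi\rangle_0=\langle v,D\phi\rangle_0=\langle Dv,\phi\rangle_0=-\langle f,\phi\rangle_0 .
\]
Combining this with $DD\phi=-\Delta\phi$ yields $\langle u,\Delta\phi\rangle_0=\langle f,\phi\rangle_0$. The identity $DD=-\Delta$ itself follows from the Clifford relations~\eqref{eq:Clifford-relations}--\eqref{eq:Clifford-anticommutation}. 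The weighted spaces embed into $L^2_{\mathrm{loc}}$ because $e^{-\varphi}$ is locally bounded below, so all pairings are legitimate.

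\textbf{Real-valued data.} If $f$ is real-valued, take $u_0:=\operatorname{Re}(u)$, the scalar component. Since $\Delta$ acts componentwise, the scalar part of $\Delta u=f$ reads $\Delta u_0=f$, and clearly $\|u_0\|_\varphi\le\|u\|_\varphi$. So $u_0$ is a real-valued solution with the same estimate.

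The only delicate point is the distributional bookkeeping in the second step, namely that composing two weak solutions of $D$ gives a weak solution of $-\Delta$. The duality computation above handles it, with no further obstacle expected.
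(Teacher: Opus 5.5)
Your proposal is correct and follows the paper's proof: two applications of the Gaussian case of Theorem~\ref{thm:L2-existence}(1), the factorization $\Delta=-D^{2}$, and taking the scalar part for real-valued data. The only differences are that you put the minus sign in the first solve ($Dv=-f$) rather than the second, and that you write out the distributional check $\langle u,\Delta\phi\rangle_0=\langle f,\phi\rangle_0$, which the paper leaves implicit when it writes $\Delta u=-D^{2}w$.
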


\begin{proof}
By Theorem \ref{thm:L2-existence}(1), there exists $v \in L_{\varphi}^{2}(\Omega, \mathbb{R}_{n})$ such that
\[
Dv = f \quad \text{and} \quad \|v\|_{\varphi}^{2} \leq \frac{1}{4} \|f\|_{\varphi}^{2}.
\]
Applying Theorem \ref{thm:L2-existence}(1) again to the data $-v$, there exists $w \in L_{\varphi}^{2}(\Omega, \mathbb{R}_{n})$ such that
\[
Dw = -v \quad \text{and} \quad \|w\|_{\varphi}^{2} \leq \frac{1}{4} \|v\|_{\varphi}^{2} \leq \frac{1}{16} \|f\|_{\varphi}^{2}.
\]
Define $u := w$. Then,
\[
\Delta u = -D^2 w = D(-D w) = Dv = f,
\]
and $\|u\|_{\varphi}^{2} = \|w\|_{\varphi}^{2} \leq \frac{1}{16} \|f\|_{\varphi}^{2}$.

If $f$ is real-valued, then the real part of $u$, $\operatorname{Re}(u)$, is also a solution to $\Delta(\operatorname{Re} u) = f$ and satisfies $\|\operatorname{Re} u\|_{\varphi}^{2} \leq \|u\|_{\varphi}^{2} \leq \frac{1}{16} \|f\|_{\varphi}^{2}$.
\end{proof}

We conclude the section with an abstract $L^2$ existence theorem for the Dirac operator.

\begin{theorem}\label{thm:general-L2-existence}
Let $\Omega \subset \mathbb{R}^{n}$ be a domain and let
$\varphi \in C^2(\Omega, \mathbb{R})$ be subharmonic with $|D\varphi|>0$ on $\Omega$.
Assume that there exist constants \(k>0\) and \(\varepsilon\in(0,1)\) such that
\[
k \|D_{\varphi}^{*}u\|_{\varphi}^{2}
 + (1 - \varepsilon) \int_{\Omega} \Delta \varphi |u|^2 e^{-\varphi}\,dV
 + \sum_{j,k=1}^{n}
  \left\langle
   \partial_k \left( \frac{D\varphi \, e_k e_j D\varphi}{|D\varphi|^2} \right)
   \partial_j U, U
  \right\rangle_0
 \ge 0
\]
for all \(u \in C_c^\infty(\Omega,\mathbb{R}_n)\), where \(U:=u e^{-\varphi/2}\).
Then, for every \(f \in L_{\varphi}^{2}(\Omega,\mathbb{R}_{n})\) satisfying
\[
\int_{\Omega} \frac{|f|^2}{ \Delta \varphi } e^{-\varphi}\,dV < \infty,
\]
there exists a solution \(u \in L_{\varphi}^{2}(\Omega,\mathbb{R}_{n})\) to \(Du = f\) such that
\[
\|u\|_{\varphi}^{2}
 \le \frac{1 + k}{\varepsilon}
  \int_{\Omega} \frac{|f|^2}{\Delta \varphi} e^{-\varphi}\,dV.
\]
\end{theorem}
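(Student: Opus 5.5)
The argument combines Proposition~\ref{prop:general-application} with Lemma~\ref{lem:L2 existence}; there is nothing new to compute, only a check that the hypotheses match.

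First, apply the implication part of Proposition~\ref{prop:general-application}. The standing hypotheses of Theorem~\ref{thm:general-L2-existence} are exactly those of that proposition: $\varphi\in C^2$ is subharmonic with $|D\varphi|>0$, and the assumed inequality with constants $k>0$ and $\varepsilon\in(0,1)$ holds for all $u\in C_c^\infty(\Omega,\mathbb R_n)$. The proposition therefore gives, for every such $u$,
\[
\|D_\varphi^*u\|_\varphi^2\ \ge\ \frac{\varepsilon}{1+k}\int_\Omega \Delta\varphi\,|u|^2e^{-\varphi}\,dV .
\]
The mechanism is as follows. In the identity of Proposition~\ref{prop:general-application}, drop the nonnegative twisted norm. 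Then add $k\|D_\varphi^*u\|_\varphi^2$ to both sides and use the hypothesis to bound the sum of the $(1-\varepsilon)\Delta\varphi$-term and the commutator term from below by $-k\|D_\varphi^*u\|^2_\varphi$. The remaining piece is $\varepsilon\int\Delta\varphi|u|^2e^{-\varphi}$.

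Second, invoke Lemma~\ref{lem:L2 existence} with the nonnegative function
\[
A:=\frac{\varepsilon}{1+k}\,\Delta\varphi .
\]
It is nonnegative by subharmonicity. The a priori estimate from the first step is precisely hypothesis \eqref{eq:lem:L2 existence}. The integrability condition on $f$ transfers directly, since
\[
\int_\Omega |f|^2A^{-1}e^{-\varphi}\,dV=\frac{1+k}{\varepsilon}\int_\Omega |f|^2(\Delta\varphi)^{-1}e^{-\varphi}\,dV<\infty .
\]
The lemma then produces $u\in L^2_\varphi$ with $Du=f$ and the stated bound with constant $(1+k)/\varepsilon$.

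The only delicate point is where $\Delta\varphi$ vanishes. There the quotient $|f|^2/\Delta\varphi$ must be read with the convention that finiteness forces $f=0$ almost everywhere on $\{\Delta\varphi=0\}$. The Cauchy--Schwarz step in the proof of Lemma~\ref{lem:L2 existence} is then applied only on $\{A>0\}$, and the contribution from $\{A=0\}$ vanishes because $f=0$ there. I would add a sentence noting that this convention is the same one already implicit in Lemma~\ref{lem:L2 existence}, so no extra argument is needed.
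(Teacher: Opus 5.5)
Your proposal is correct and matches the paper's proof: the implication in Proposition~\ref{prop:general-application} gives $(1+k)\|D_\varphi^*u\|_\varphi^2\ge\varepsilon\int_\Omega\Delta\varphi\,|u|^2e^{-\varphi}\,dV$, and Lemma~\ref{lem:L2 existence} applied with $A=\frac{\varepsilon}{1+k}\Delta\varphi$ then gives the solution with constant $(1+k)/\varepsilon$. Your remark that finiteness of the integral forces $f=0$ almost everywhere on $\{\Delta\varphi=0\}$ is a sensible clarification of a convention the paper leaves implicit.
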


\begin{proof}
By Proposition~\ref{prop:general-application},
\[
\|D_{\varphi}^{*}u\|_{\varphi}^{2}
 \ge \frac{\varepsilon}{1 + k}
  \int_{\Omega} \Delta \varphi\, |u|^2 e^{-\varphi}\,dV
\qquad
\text{for all }u\in C_c^\infty(\Omega,\mathbb{R}_n).
\]
The conclusion now follows directly from Lemma~\ref{lem:L2 existence}.
\end{proof}

\section{Concluding remarks}\label{section:8}

This paper studies weighted $L^{2}$ estimates for the Euclidean Dirac operator in higher dimensions. Theorem~\ref{thm:1.2} shows that the two-dimensional subharmonic-weight principle does not extend to dimensions $n\ge3$: on $\mathbb R^{n}\setminus\overline{B(0,1)}$ with $\varphi=n\log|x|$, the Laplacian of the weight does not by itself control the minimal $L^{2}_{\varphi}$ solution. Accordingly, within the classical Bochner framework, conditions such as \eqref{eq:structural-kappa} should be viewed as genuine coercivity assumptions.

The positive results are obtained from the weighted identity in Proposition~\ref{prop:general-identity}, formulated for the conjugated unknown $U:=ue^{-\varphi/2}$ and suitable auxiliary multipliers. This identity yields weighted solvability for radial powers $|x|^{m}$, for the quadratic weight $x_{1}^{2}$, and for small anisotropic perturbations of the Gaussian weight on exterior domains. In the Gaussian case the constant $1/4$ in \eqref{eq:sharp-estimate-gauss-weight} is sharp. Through the factorization $\Delta=-D^{2}$, the same estimates also give weighted solvability for the Poisson equation.

Several questions remain open, including extensions to Dirac-type operators on manifolds and to more general anisotropic or non-polynomial weights.

\section*{Declarations}
\textbf{Funding} This work was supported by the National Natural Science Foundation of China (Grant No.~12571090).

\textbf{Competing interests} The authors declare that they have no competing interests.

\end{document}